\numberwithin{equation}{section}
\numberwithin{figure}{section}
 \theoremstyle{definition}
 \newtheorem*{defn*}{\protect\definitionname}
\theoremstyle{plain}
\newtheorem{thm}{\protect\theoremname}[section]
  \theoremstyle{remark}
  \newtheorem{rem}[thm]{\protect\remarkname}
  \theoremstyle{plain}
  \newtheorem{prop}[thm]{\protect\propositionname}
  \theoremstyle{plain}
  \newtheorem{cor}[thm]{\protect\corollaryname}
  \theoremstyle{plain}
  \newtheorem{lem}[thm]{\protect\lemmaname}
  \theoremstyle{definition}
  \newtheorem{defn}[thm]{\protect\definitionname}
\newenvironment{keywords}{ \noindent\footnotesize\textbf{Keywords and phrases:}}{}
\newenvironment{class}{\noindent\footnotesize\textbf{Mathematics subject classification 2010:}}{}
\theoremstyle{definition}
\newtheorem*{hyp}{Hypotheses}
\newcommand*{\dive}{\operatorname{div}}
\newcommand*{\Grad}{\operatorname{Grad}}
\newcommand*{\Div}{\operatorname{Div}}
\renewcommand*{\i}{\mathrm{i}}
\DeclareMathAccent{\Circ}{\mathalpha}{operators}{"17}
\newcommand{\interior}[1]{\Circ{#1}}
\renewcommand{\Im}{\operatorname{\mathfrak{Im}}}
\renewcommand{\Re}{\operatorname{\mathfrak{Re}}}
\newcommand{\oi}[2]{\left]#1,#2 \right[}
\newcommand{\rga}[1]{\left]#1,\infty  \right[}
\newcommand{\X}{\raisebox{2pt}{$\chi$}}
\renewcommand{\tilde}{\widetilde}
\renewcommand*{\epsilon}{\varepsilon}
\renewcommand*{\theta}{\vartheta}
\renewcommand*{\rho}{\varrho}
  \providecommand{\corollaryname}{Corollary}
  \providecommand{\definitionname}{Definition}
  \providecommand{\lemmaname}{Lemma}
  \providecommand{\propositionname}{Proposition}
  \providecommand{\remarkname}{Remark}
\providecommand{\theoremname}{Theorem}
\begin{document}
\selectlanguage{english}%
\institut{Institut f\"ur Analysis}

\preprintnumber{MATH-AN-01-2013}

\preprinttitle{On Non-Autonomous Evolutionary Problems.}

\author{Rainer Picard, Sascha Trostorff, Marcus Waurick \& Maria Wehowski} 

\makepreprinttitlepage

\selectlanguage{american}%
\setcounter{section}{-1}

\date{}

\title{On Non-Autonomous Evolutionary Problems.}

\author{Rainer Picard, Sascha Trostorff, Marcus Waurick, Maria Wehowski\\
Institut f\"ur Analysis, Fachrichtung Mathematik\\
Technische Universit\"at Dresden\\
Germany\\
rainer.picard@tu-dresden.de\\
sascha.trostorff@tu-dresden.de\\
marcus.waurick@tu-dresden.de\\
maria.wehowski@mailbox.tu-dresden.de }
\maketitle
\begin{abstract}
The paper extends well-posedness results of a previously explored
class of time-shift invariant evolutionary problems to the case of
non-autonomous media. The Hilbert space setting developed for the
time-shift invariant case can be utilized to obtain an elementary
approach to non-autonomous equations. The results cover a large class
of evolutionary equations, where well-known strategies like evolution
families may be difficult to use or fail to work. We exemplify the
approach with an application to a Kelvin-Voigt-type model for visco-elastic
solids.
\end{abstract}
\begin{keywords}
non-autonomous, evolutionary problems, extrapolation spaces, Sobolev lattices  \end{keywords}

\begin{class}
35F05, 35F10, 35F15, 37L05,74B99
\end{class}

\newpage

\tableofcontents{}

\section{Introduction}

In a number of studies it has been demonstrated that systems of the
form
\begin{equation}
\left(\partial_{0}\mathcal{M}+A\right)u=F,\label{eq:evo-eq}
\end{equation}
where $\mathcal{\ensuremath{M}}$ is a continuous, linear mapping
and the densely defined, closed linear operator $A$ is such that
$A$ and $A^{*}$ are maximal $\omega$-accretive for some suitable
$\omega\in\mathbb{R}$, cover numerous models from mathematical physics.
Indeed, $A$ skew-selfadjoint%
\footnote{Two densely defined linear operators $A,B$ are skew-adjoint (to each
other) if $A=-B^{*}$. If $A=B$, we call $A$ skew-selfadjoint (rather
than self-skew-adjoint). The proper implications 
\[
A\mbox{ selfadjoint}\implies A\mbox{ symmetric}\implies A\mbox{ Hermitean}
\]
 are paralleled by 
\[
A\mbox{ skew-selfadjoint}\implies A\mbox{ skew-symmetric}\implies A\mbox{ skew-Hermitean}.
\]
Frequently, ``skew-adjoint'' is used to mean ``skew-selfadjoint''.
We shall, however, not follow this custom for the obvious reason.%
} is a standard situation, which for simplicity we shall assume throughout.
The well-posedness of (\ref{eq:evo-eq}) hinges on a positive definiteness
assumption imposed on $\partial_{0}\mathcal{M}$ in a suitable space-time
Hilbert space setting. Under this assumption the solution theory is
comparatively elementary since $\left(\partial_{0}\mathcal{M}+A\right)$
together with its adjoint are positive definite yielding that $\left(\partial_{0}\mathcal{M}+A\right)$
has dense range and a continuous inverse.

In applications of this setting the operator $A$ has a rather simple
structure whereas the complexity of the physical system is encoded
in the ``material law'' operator $\mathcal{M}$. A simple but important
case is given by 
\[
\mathcal{M}=M_{0}+\partial_{0}^{-1}M_{1},
\]
where $M_{0}$, $M_{1}$ are time-independent continuous linear operators.
Here we have anticipated that in the Hilbert space setting to be constructed
$\partial_{0}^{-1}$ (time integration) has a proper meaning. The
positive definiteness assumption requires $M_{0}$ to be non-negative
and selfadjoint and
\begin{equation}
\rho M_{0}+\Re M_{1}\geq c_{0}\label{eq:pos-def-a}
\end{equation}
for some $c_{0}\in\rga0$ and all sufficiently large $\rho\in\rga0$.
Since we do not assume that $M_{0}$ is always strictly positive,
(\ref{eq:pos-def-a}) may imply constraints on $M_{1}$. If $M_{0}$
is positive definite it may seem natural, following the proven idea
of first finding a fundamental solution (given by an associated semi-group),
and then to obtain general solutions as convolutions with the data
(Duhamel's principle, variation of constants formula) and so proving
well-posedness. This is the classical method of choice in a Banach
space setting, see e.g.\ \cite{0978.34001,EngNag,Ref189,Ref497,Yosida_6th}
as general references. In comparison our approach is (currently) limited
to a Hilbert space setting, however, apart from being conceptually
more elementary, it allows to incorporate delay and convolution integral
terms by a simple perturbation argument and, if $M_{0}$ has a non-trivial
kernel, the system becomes a differential-algebraic systems, which
to the above approach makes no difference, but cannot be conveniently
analyzed within the framework of semi-group theory. 

The purpose of this paper is to extend well-posedness results previously
obtained for time-shift invariant material operators $\mathcal{M}$
to cases, where $\mathcal{M}$ is not time-shift invariant. This is
the so-called time-dependent or non-autonomous case. The above-mentioned
limitations of the semi-group approach carry over to the application
of classical strategies based on evolution families intoduced by Kato
(\cite{Kato 1953}), for a survey see e.g.\ \cite{Ref647,Ref497},
which are the corresponding abstract Green's functions, in the non-autonomous
case. The approach we shall develop here, by-passes the relative sophistication
of the classical approach based on evolution families and extends,
moreover, to differential-algebraic cases and allows to include memory
effects, in a simple unified setting.

To keep the presentation self-contained we construct the Hilbert space
setting in sufficient detail and formulate our results so that the
autonomous case re-appears as a special case of the general non-autonomous
situation.

In order to formulate the problem class rigorously and to avoid at
the same time to incur unnecessary regularity constraints on data
and domain for prospective applications, it is helpful to introduce
suitable extrapolation Hilbert spaces (Sobolev lattices). This will
be done in the next section. In Section \ref{sec:Space-Time-Evolution-Equations}
we shall describe a class of non-autonomous evolutionary equations
and its solution theory. The paper concludes with the discussion of
a particular application to a class of Kelvin-Voigt-type models for
visco-elastic solids to exemplify the theoretical findings.

In the following let $H$ be a Hilbert space with inner product $\langle\cdot|\cdot\rangle$
and induced norm $|\cdot|$.

\section{Preliminaries}

In this subsection we recall the construction of a short Sobolev chain
associated with a normal, boundedly invertible operator $N$ on some
Hilbert space $H$ as it was presented for instance in \cite[Section 2.1]{Picard_McGhee}.
We note that the construction can be generalized to the case of a
closed, boundedly invertible operator on some Hilbert space. However,
since the operators we are interested in are all normal, we may reduce
ourselves to the slightly easier case of normal operators. 
\begin{defn*}
Let $H$ be a Hilbert space and $N\colon D(N)\subseteq H\to H$ be
a normal operator with $0\in\rho(N).$ Then the sesqui-linear form
\begin{align*}
\langle\cdot|\cdot\rangle_{H_{1}(N)}\colon D(N)\times D(N) & \to\mathbb{C}\\
(f,g) & \mapsto\langle Nf|Ng\rangle,
\end{align*}
defines an inner product on $D(N)$ and due to the closedness of $N,$
$D(N)$ equipped with this inner product becomes a Hilbert space.
We denote it by $H_{1}(N).$ Moreover, by 
\begin{align*}
\langle\cdot|\cdot\rangle_{H_{-1}(N)}\colon H\times H & \to\mathbb{C}\\
(f,g) & \mapsto\langle N^{-1}f|N^{-1}g\rangle
\end{align*}
we define an inner product on $H$ and we denote the completion of
$H$ with respect to the induced norm by $H_{-1}(N).$ The triple
$\left(H_{1}(N),H_{0}(N),H_{-1}(N)\right)$ is called \emph{short
Sobolev chain associated with $N$, }where $H_{0}(N)\coloneqq H$.\end{defn*}
\begin{rem}
Note that the above construction can be done analogously for the $k$-th
power of $N$ for $k\in\mathbb{N}.$ The resulting sequence of Hilbert
spaces $\left(H_{k}(N)\right)_{k\in\mathbb{Z}},$ where we set $H_{k}(N)\coloneqq H_{1}\left(N^{k}\right)$
and $H_{-k}(N)\coloneqq H_{-1}\left(N^{k}\right)$ for $k\in\mathbb{N}\setminus\left\{ 0\right\} $
is called the \emph{Sobolev chain associated with $N$}. 
\end{rem}
A simple estimate shows that $H_{1}(N)$ is densely and continuously
embedded into $H_{0}(N),$ which itself is densely and continuously
embedded into $H_{-1}(N).$ Thus we arrive at a chain of dense and
continuous embeddings of the form 
\[
H_{1}(N)\hookrightarrow H_{0}(N)\hookrightarrow H_{-1}(N).
\]
We can establish the operator $N$ as a unitary operator acting on
this chain.
\begin{prop}[{\cite[Theorem 2.1.6]{Picard_McGhee}}]
\label{prop:realization} The operator $N\colon H_{1}(N)\to H_{0}(N)$
is unitary. Moreover, the operator $N\colon D(N)\subseteq H_{0}(N)\to H_{-1}(N)$
has a unitary extension to $H_{0}(N).$%
\footnote{We will not distinguish notationally between the operator $N$ and
its unitary realizations on the Sobolev chain associated with $N$.%
} 
\end{prop}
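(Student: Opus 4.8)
The plan is to verify the two claims directly from the definitions of the inner products on $H_1(N)$ and $H_{-1}(N)$, using only that $N$ is normal with $0\in\rho(N)$, so that $N^{-1}$ is a bounded operator on $H_0(N)$.

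First I would treat the embedding $N\colon H_1(N)\to H_0(N)$. By definition, for $f\in H_1(N)=D(N)$ one has $\langle f|f\rangle_{H_1(N)}=\langle Nf|Nf\rangle=|Nf|^2$, so $N$ is an isometry from $H_1(N)$ into $H_0(N)$, in particular injective with closed range. Surjectivity is immediate from $0\in\rho(N)$: given $g\in H_0(N)$, the element $N^{-1}g$ lies in $D(N)=H_1(N)$ and maps to $g$. Hence $N\colon H_1(N)\to H_0(N)$ is a bijective isometry, i.e.\ unitary. (Equivalently, this just records that $N^{-1}\colon H_0(N)\to H_1(N)$ is the inverse isometry.)

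Next I would handle $N\colon D(N)\subseteq H_0(N)\to H_{-1}(N)$. The key computation is that for $f\in D(N)$, writing $h=Nf\in H_0(N)$, we get $\langle Nf|Nf\rangle_{H_{-1}(N)}=\langle N^{-1}(Nf)|N^{-1}(Nf)\rangle=\langle f|f\rangle=|f|^2$; so $N$, viewed with domain $D(N)$ and target norm $|\cdot|_{H_{-1}(N)}$, coincides on $D(N)$ with the restriction of the identity map $H_0(N)\to H_{-1}(N)$ composed on the other side — more precisely, $N$ agrees on $D(N)$ with the isometry $H_0(N)\hookrightarrow H_{-1}(N)$ under the identification $f\mapsto Nf$. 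Since $D(N)$ is dense in $H_0(N)$ and $H_0(N)$ is dense in $H_{-1}(N)$, this isometry extends uniquely to all of $H_0(N)$, and its range is dense in $H_{-1}(N)$; being a densely-defined isometry with dense range between Hilbert spaces, the extension is unitary. This is precisely the asserted unitary extension.

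The only mildly delicate point — and the place I would be most careful — is the bookkeeping in the second part: one must check that the ``completion'' construction of $H_{-1}(N)$ is compatible with letting $N$ act, i.e.\ that the formula $\langle f|g\rangle_{H_{-1}(N)}=\langle N^{-1}f|N^{-1}g\rangle$ on $H_0(N)\times H_0(N)$ does extend continuously to the completion and that $N$, originally only defined on $D(N)$, picks out the correct dense subspace on which the isometry property $|Nf|_{H_{-1}(N)}=|f|_{H_0(N)}$ holds. Once normality (hence $N^{-1}$ bounded and $N^{-1}(Nf)=f=N(N^{-1}f)$ on the appropriate domains) is invoked, all of this is routine; there is no genuine obstacle, only the need to keep the three norms and the embeddings straight. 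This is exactly the content of \cite[Theorem 2.1.6]{Picard_McGhee}, which may simply be cited.
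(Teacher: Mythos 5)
Your argument is correct: the identity $|f|_{H_{1}(N)}=|Nf|_{H_{0}(N)}$ together with surjectivity from $0\in\rho(N)$ gives the first claim, and the identity $|Nf|_{H_{-1}(N)}=|f|_{H_{0}(N)}$ on the dense subspace $D(N)$, plus density of the range $N[D(N)]=H_{0}(N)$ in $H_{-1}(N)$, gives the unitary extension. The paper itself offers no proof (it only cites \cite[Theorem 2.1.6]{Picard_McGhee}), and your direct verification is exactly the standard argument behind that citation.
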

Since $N$ is boundedly invertible its adjoint $N^{\ast}$ has a bounded
inverse, too. Hence we can do the same construction of a Sobolev chain
associated with $N^{\ast}.$ However, since $N$ is assumed to be
normal we have that $D(N)=D(N^{\ast}),$ as well as $|Nx|=|N^{\ast}x|$
for each $x\in D(N).$ This yields that the Sobolev chains for $N$
and $N^{\ast}$ coincide. Hence, by Proposition \ref{prop:realization}
we obtain the following result.
\begin{cor}
The operator $N^{\ast}\colon D(N^{\ast})\subseteq H_{k}(N)\to H_{k-1}(N)$
has a unitary extension to $H_{k}(N)$ for $k\in\{0,1\}.$ 
\end{cor}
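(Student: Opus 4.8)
The plan is to deduce the Corollary directly from Proposition~\ref{prop:realization} by exploiting the normality of $N$, which forces the Sobolev chains of $N$ and $N^{\ast}$ to coincide. First I would recall the two facts about normal operators already noted in the text: $D(N)=D(N^{\ast})$ and $|Nx|=|N^{\ast}x|$ for all $x\in D(N)$. The second identity says precisely that the inner products $\langle\cdot|\cdot\rangle_{H_{1}(N)}$ and $\langle\cdot|\cdot\rangle_{H_{1}(N^{\ast})}$ agree on the common domain $D(N)=D(N^{\ast})$, so $H_{1}(N)=H_{1}(N^{\ast})$ with identical norms; applying the same to $N^{-1}$ (which is again normal with $|N^{-1}x|=|(N^{\ast})^{-1}x|$) gives $H_{-1}(N)=H_{-1}(N^{\ast})$, and of course $H_{0}(N)=H=H_{0}(N^{\ast})$. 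Hence the entire short Sobolev chain of $N^{\ast}$ is literally the short Sobolev chain of $N$.

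Next I would simply invoke Proposition~\ref{prop:realization} with $N$ replaced by $N^{\ast}$: since $N^{\ast}$ is normal and boundedly invertible, the proposition yields that $N^{\ast}\colon H_{1}(N^{\ast})\to H_{0}(N^{\ast})$ is unitary, and that $N^{\ast}\colon D(N^{\ast})\subseteq H_{0}(N^{\ast})\to H_{-1}(N^{\ast})$ has a unitary extension to $H_{0}(N^{\ast})$. Rewriting $H_{k}(N^{\ast})$ as $H_{k}(N)$ via the identification from the first paragraph, the first assertion is the $k=1$ case of the Corollary and the second is the $k=0$ case. This disposes of both values $k\in\{0,1\}$.

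I do not anticipate a serious obstacle here; the content is essentially bookkeeping. The one point that deserves a sentence of care is that the identification $H_{-1}(N)=H_{-1}(N^{\ast})$ is a statement about completions: equal norms on the dense subspace $H$ (on which both $H_{-1}$-norms are defined) yield canonically isometrically isomorphic completions, and under this isomorphism the unitary realization of $N^{\ast}$ on the $N^{\ast}$-chain is carried to a unitary map on the $N$-chain. One should also note that normality of $N^{-1}$ follows from normality of $N$ (as $N$ and $N^{-1}$ commute with their adjoints and $(N^{-1})^{\ast}=(N^{\ast})^{-1}$), so the construction of $H_{-1}$ via $N^{-1}$ is legitimate and the comparison applies verbatim. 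With these remarks the Corollary is immediate.
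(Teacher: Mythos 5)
Your argument is correct and is essentially the paper's own: the paper likewise observes that $N^{\ast}$ is normal with bounded inverse, that $D(N)=D(N^{\ast})$ and $|Nx|=|N^{\ast}x|$ force the two short Sobolev chains to coincide, and then applies Proposition~\ref{prop:realization} to $N^{\ast}$. Your added remarks on the identification of the completions $H_{-1}(N)=H_{-1}(N^{\ast})$ and on the normality of $N^{-1}$ are sound and only make explicit what the paper leaves implicit.
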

Again this realization will be denoted by $N^{\ast}$ although this
might cause confusion, since $N^{\ast}$ could be interpreted as the
adjoint of the unitary realization of the operator $N$ as in Proposition
\ref{prop:realization}. The adjoint would then be the respective
inverse.

Note that for normal $N\in L\left(H,H\right)\eqqcolon L(H)$, the
space of continuous linear mappings from $H$ to $H$, we have that
\[
H_{k}\left(N\right)=H
\]
as topological linear spaces with merely different inner products
(inducing equivalent norms) in the different Hilbert spaces $H_{k}\left(N\right)$,
$k\in\{-1,0,1\}$. This indicates that considering continuous linear
operators $N$ does not lead to interesting chains.

In the remaining part of this section we consider a particular example
of a normal operator and its associated Sobolev chain, namely the
time derivative in an exponentially weighted $L^{2}$-space (see \cite{Picard_McGhee,picard1989hilbert,Kalauch2011}
for more details). \\

For $\rho\in\mathbb{R}$ we consider the Hilbert space 
\[
H_{\rho,0}(\mathbb{R})\coloneqq\left\{ \left.f\in L_{\mathrm{loc}}^{2}(\mathbb{R})\,\right|\,\left(x\mapsto\exp(-\rho x)f(x)\right)\in L^{2}(\mathbb{R})\right\} 
\]
equipped with the inner product 
\[
\langle f|g\rangle_{H_{\rho,0}(\mathbb{R})}\coloneqq\intop_{\mathbb{R}}f(x)^{\ast}g(x)\exp(-2\rho x)\mbox{ d}x\quad\left(f,g\in H_{\rho,0}(\mathbb{R})\right).
\]
We define the operator $\partial_{0,\rho}$ on $H_{\rho,0}(\mathbb{R})$
as the closure of 
\begin{align*}
\partial_{0,\rho}|_{\interior C_{\infty}(\mathbb{R})}\colon\interior C_{\infty}(\mathbb{R})\subseteq H_{\rho,0}(\mathbb{R}) & \to H_{\rho,0}(\mathbb{R})\\
\phi & \mapsto\phi',
\end{align*}
where by $\interior C_{\infty}(\mathbb{R})$ we denote the space of
arbitrarily often differentiable functions on $\mathbb{R}$ with compact
support%
\footnote{The domain of $\partial_{0,\rho}$ consists precisely of the functions
$f\in H_{\rho,0}(\mathbb{R})$ with distributional derivative lying
in $H_{\rho,0}(\mathbb{R})$.%
}. In this way we obtain a normal operator with $\Re\partial_{0,\rho}=\rho.$
Hence, for $\rho\ne0$ the operator $\partial_{0,\rho}$ is boundedly
invertible and one can show that $\left\Vert \partial_{0,\rho}^{-1}\right\Vert _{L(H_{\rho,0}(\mathbb{R}))}=1/|\rho|$.
Thus for $\rho\ne0$ we can construct the Sobolev chain associated
with $\partial_{0,\rho}$ and we introduce the notation $H_{\rho,k}(\mathbb{R})\coloneqq H_{k}\left(\partial_{0,\rho}\right)$
for $\rho\ne0$ and $k\in\{-1,0,1\}.$ For $\Im\partial_{0,\rho}$
we have as a spectral representation the Fourier-Laplace transform
$\mathcal{L}_{\rho}\colon H_{\rho,0}(\mathbb{R})\to L^{2}\left(\mathbb{R}\right)$
given by the unitary extension of 
\begin{align*}
\interior C_{\infty}\left(\mathbb{R}\right)\subseteq H_{\rho,0}(\mathbb{R}) & \to L^{2}(\mathbb{R})\\
\phi & \mapsto\left(x\mapsto\frac{1}{\sqrt{2\pi}}\int_{\mathbb{R}}\exp\left(-\mathrm{i}xy\right)\;\exp\left(-\rho y\right)\phi(y)\; dy\right).
\end{align*}
In other words, we have the unitary equivalence
\[
\Im\partial_{0,\rho}=\mathcal{L}_{\rho}^{\ast}m\mathcal{L}_{\rho},
\]
where $m$ denotes the multiplication-by-argument operator in $L^{2}\left(\mathbb{R}\right)$
with maximal domain, i.e.\ $\left(mf\right)(x)\coloneqq xf(x)$ for
a.e.\  $x\in\mathbb{R},\, f\in D(m)\coloneqq\left\{ g\in L^{2}(\mathbb{R})\,|\,(x\mapsto xg(x))\in L^{2}(\mathbb{R})\right\} $.
The latter yields 
\[
\partial_{0,\rho}=\mathcal{L}_{\rho}^{\ast}\left(\i m+\rho\right)\mathcal{L}_{\rho}.
\]
For $\rho\ne0$ we can represent the resolvent $\partial_{0,\rho}$
as an integral operator given by 
\[
\left(\partial_{0,\rho}^{-1}u\right)\left(x\right)=\int_{-\infty}^{x}u\left(t\right)\:\mathrm{d}t\quad(x\in\mathbb{R}\:\mbox{a.e.})
\]
if $\rho>0$ and 
\[
\left(\partial_{0,\rho}^{-1}u\right)\left(x\right)=-\int_{x}^{\infty}u\left(t\right)\:\mathrm{d}t\quad(x\in\mathbb{R}\:\mbox{a.e.})
\]
if $\rho<0$ for all $u\in H_{\rho,0}(\mathbb{R}).$ Since we are
interested in the (forward) causal situation (see Definition \ref{def: causality}
below), we assume $\rho>0$ throughout. Moreover, in the following
we shall mostly write $\partial_{0}$ for $\partial_{0,\rho}$ if
the choice of $\rho$ is clear from the context.

Let now $N$ denote a normal operator in a Hilbert space $H$ with
$0$ in its resolvent set. Then $N$ has a canonical extension to
the time-dependent case, i.e., to $H_{\rho,0}\left(\mathbb{R};H\right)\cong H_{\rho,0}(\mathbb{R})\otimes H$,
the space of $H$-valued functions on $\mathbb{R}$, which are square-integrable
with respect to the exponentially weighted Lebesgue-measure. Analogously
we can extend $\partial_{0}$ to an operator on $H_{\rho,0}(\mathbb{R};H)$
in the canonical way. Then $\partial_{0}$ and $N$ become commuting
normal operators and by combining the two chains we obtain a Sobolev
lattice in the sense of \cite[Sections 2.2 and 2.3]{Picard_McGhee}
based on $\left(\partial_{0},N\right)$ yielding a family of Hilbert
spaces 
\[
\left(H_{\rho,k}\left(\mathbb{R};H_{s}\left(N\right)\right)\right)_{k,s\in\{-1,0,1\}}
\]
with norms $\left|\:\cdot\:\right|_{\rho,k,s}$ given by 
\[
v\mapsto\left|\partial_{0}^{k}N^{s}v\right|_{H_{\rho,0}(\mathbb{R};H)}
\]

for $k,s\in\{-1,0,1\}$. The operators $\partial_{0}$ and $N$ can
then be established as unitary mappings from $H_{\rho,k}\left(\mathbb{R};H_{s}(N)\right)$
to $H_{\rho,k-1}\left(\mathbb{R};H_{s}(N)\right)$ for $k\in\{0,1\},s\in\{-1,0,1\}$
and from $H_{\rho,k}\left(\mathbb{R};H_{s}(N)\right)$ to $H_{\rho,k}\left(\mathbb{R};H_{s-1}(N)\right)$
for $k\in\{-1,0,1\},s\in\{0,1\},$ respectively.

\section{Space-time evolutionary equations\label{sec:Space-Time-Evolution-Equations} }

\subsection*{Well-posedness for a class of evolutionary problems}

We are now ready to rigorously approach the well-posedness class we
wish to present. We shall consider equations of the form
\[
\left(\partial_{0}M(m_{0},\partial_{0}^{-1})+A\right)u=F,
\]
where for simplicity we assume that $A$ is skew-selfadjoint in a
Hilbert space $H$ and $M(t,\cdot)$ is a material law function in
the sense of \cite{Picard} for almost every $t\in\mathbb{R}$. More
specifically we assume that $M$ is of the form%
\footnote{Note that $\left(\Phi\left(m_{0}\right)\varphi\right)(t)=\Phi(t)\varphi(t)$
for $t\in\mathbb{R},\varphi\in\interior C_{\infty}(\mathbb{R};H)$
and $\Phi\colon\mathbb{R}\to L(H)$ strongly measurable and bounded.
Hence, $\Phi\left(m_{0}\right)\in L\left(H_{\rho,0}(\mathbb{R};H)\right)$
and $\left\Vert \Phi(m_{0})\right\Vert _{L\left(H_{\rho,0}(\mathbb{R};H)\right)}\leq\sup_{t\in\mathbb{R}}\left\Vert \Phi(t)\right\Vert _{L(H)}$
for each $\rho\geq0$. %
}
\[
M(m_{0},\partial_{0}^{-1})=M_{0}(m_{0})+\partial_{0}^{-1}M_{1}(m_{0}),
\]

where $M_{0},M_{1}\in L_{s}^{\infty}(\mathbb{R};L(H))$, the space
of strongly measurable uniformly bounded functions with values in
$L(H)$. We understand $\partial_{0}M_{0}(m_{0})+M_{1}(m_{0})+A$
as an (unbounded) operator in $H_{\rho,0}(\mathbb{R};H)$ with maximal
domain
\[
\mathcal{D}\coloneqq\{u\in H_{\rho,0}(\mathbb{R};H)\,|\,\partial_{0}M_{0}(m_{0})u+Au\in H_{\rho,0}(\mathbb{R};H)\}.
\]
Note that for $u\in H_{\rho,0}(\mathbb{R};H)$ we have
\[
\partial_{0}M_{0}(m_{0})u+M_{1}(m_{0})u+Au\in H_{\rho,-1}(\mathbb{R};H_{-1}(A+1))
\]
and for this to be in $H_{\rho,0}(\mathbb{R};H)$ is the constraint
determining the maximal domain $\mathcal{D}$. 

\begin{hyp} We say that $T\in L_{s}^{\infty}(\mathbb{R};L(H))$ satisfies
the property

\begin{enumerate}[(a)]

\item \label{selfadjoint} if $T(t)$ is selfadjoint $(t\in\mathbb{R})$,

\item \label{non-negative} if $T(t)$ is non-negative $(t\in\mathbb{R})$,

\item  \label{Lipschitz} if the mapping $T$ is Lipschitz-continuous,
where we denote the smallest Lipschitz-constant of $T$ by $|T|_{\mathrm{Lip}}$,
and

\item  \label{differentiable} if there exists a set $N\subseteq\mathbb{R}$
of measure zero such that for each $x\in H$ the function 
\[
\mathbb{R}\setminus N\ni t\mapsto T(t)x
\]
is differentiable%
\footnote{If $H$ is separable, then the strong differentiability of $T$ on
$\mathbb{R}\setminus N$ for some set $N$ of measure zero already
follows from the Lipschitz-continuity of $T.$%
}. 

\end{enumerate}

\end{hyp}
\begin{lem}
\label{lem:product rule} Assume that $M_{0}$ satisfies properties
(\ref{Lipschitz}) and (\ref{differentiable}). Then for each $t\in\mathbb{R}$
the mapping 
\begin{eqnarray*}
\dot{M}_{0}(t)\colon H & \to & H\\
x & \mapsto & \begin{cases}
\left(M_{0}(\cdot)x\right)'(t), & t\in\mathbb{R}\setminus N,\\
0, & t\in N
\end{cases}
\end{eqnarray*}
is a bounded linear operator with $\|\dot{M}_{0}(t)\|_{L(H)}\leq|M_{0}|_{\mathrm{Lip}}.$
Thus, $\dot{M}_{0}\in L_{s}^{\infty}(\mathbb{R};L(H))$ gives rise
to a multiplication operator $\dot{M}_{0}(m_{0})\in L(H_{\rho,0}(\mathbb{R};H))$
given by 
\[
(\dot{M}_{0}(m_{0})u)(t)\coloneqq\left(M_{0}(\cdot)u(t)\right)'(t)
\]
for $u\in H_{\rho,0}(\mathbb{R};H)$ and almost every $t\in\mathbb{R}.$
Moreover, for $u\in D(\partial_{0})$ the product rule 
\begin{equation}
\partial_{0}M_{0}(m_{0})u=\dot{M}_{0}(m_{0})u+M_{0}(m_{0})\partial_{0}u\label{eq:product_rule}
\end{equation}
holds. In particular, $M_{0}(m_{0})\in L\left(H_{\rho,-1}(\mathbb{R};H)\right).$
If, in addition, $M_{0}$ satisfies property (\ref{selfadjoint})
then $\dot{M}_{0}(m_{0})$ is selfadjoint.\end{lem}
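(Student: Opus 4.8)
The plan is to treat the five assertions in order. The pointwise statement comes first: $\dot M_0(t)$ is linear because $M_0(t)$ and the derivative are, and for $t\in\mathbb{R}\setminus N$, $x\in H$ and $h\neq0$ one has $|h|^{-1}\bigl|M_0(t+h)x-M_0(t)x\bigr|\le\|M_0(t+h)-M_0(t)\|_{L(H)}|x|/|h|\le|M_0|_{\mathrm{Lip}}|x|$ by property~(\ref{Lipschitz}); letting $h\to0$ gives $\|\dot M_0(t)\|_{L(H)}\le|M_0|_{\mathrm{Lip}}$. For the strong measurability of $t\mapsto\dot M_0(t)x$ I would note that on $\mathbb{R}\setminus N$ it is the pointwise limit of the strongly measurable difference quotients $t\mapsto n\bigl(M_0(t+\tfrac1n)x-M_0(t)x\bigr)$, using that the exceptional set $N$ from property~(\ref{differentiable}) is independent of $x$. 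Hence $\dot M_0\in L_{s}^{\infty}(\mathbb{R};L(H))$ with the claimed bound, and then, as recalled above for a general $\Phi\in L_{s}^{\infty}(\mathbb{R};L(H))$, the induced multiplication operator satisfies $\dot M_0(m_0)\in L(H_{\rho,0}(\mathbb{R};H))$ with $\|\dot M_0(m_0)\|\le|M_0|_{\mathrm{Lip}}$.

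For the product rule~(\ref{eq:product_rule}) I would argue by density, first verifying it on the core $\interior C_\infty(\mathbb{R};H)$ of $\partial_0$. Fix $u\in\interior C_\infty(\mathbb{R};H)$ and put $g(t):=M_0(t)u(t)$. Splitting $g(t+h)-g(t)=M_0(t+h)\bigl(u(t+h)-u(t)\bigr)+\bigl(M_0(t+h)-M_0(t)\bigr)u(t)$ and using the operator-norm continuity of $M_0$ on the first summand and the very definition of $\dot M_0(t)$ on the second, one finds that $g$ is classically differentiable at every $t\notin N$ with $g'(t)=\dot M_0(t)u(t)+M_0(t)u'(t)$. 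Since $g$ is moreover globally Lipschitz with compact support, pairing with the functionals $\langle\,\cdot\,|\,x\rangle$ reduces the fundamental theorem of calculus for $g$ to the scalar Lipschitz case and yields $g(b)-g(a)=\int_a^b g'(t)\,\mathrm dt$; as $g'$ is bounded with compact support, $g\in D(\partial_0)$ with $\partial_0g=g'$, which is~(\ref{eq:product_rule}). For general $u\in D(\partial_0)$ I would take $u_n\in\interior C_\infty(\mathbb{R};H)$ with $u_n\to u$ and $\partial_0u_n\to\partial_0u$ in $H_{\rho,0}(\mathbb{R};H)$; since $\dot M_0(m_0)$ and $M_0(m_0)$ are bounded, the right-hand side of~(\ref{eq:product_rule}) with $u_n$ converges, and closedness of $\partial_0$ then gives $M_0(m_0)u\in D(\partial_0)$ and~(\ref{eq:product_rule}).

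The last two points are short consequences. For $M_0(m_0)\in L(H_{\rho,-1}(\mathbb{R};H))$, take $v\in H_{\rho,0}(\mathbb{R};H)$, set $w:=\partial_0^{-1}v\in D(\partial_0)$, and rewrite~(\ref{eq:product_rule}) as $\partial_0^{-1}M_0(m_0)v=M_0(m_0)w-\partial_0^{-1}\dot M_0(m_0)w$; using $\|\partial_0^{-1}\|_{L(H_{\rho,0}(\mathbb{R};H))}=1/\rho$ and $|w|_{\rho,0}=|\partial_0^{-1}v|_{\rho,0}=|v|_{\rho,-1}$ gives $|M_0(m_0)v|_{\rho,-1}\le\bigl(\|M_0\|_\infty+|M_0|_{\mathrm{Lip}}/\rho\bigr)|v|_{\rho,-1}$, so $M_0(m_0)$ extends from the dense subspace $H_{\rho,0}(\mathbb{R};H)$ to a bounded operator on $H_{\rho,-1}(\mathbb{R};H)$. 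Finally, under property~(\ref{selfadjoint}), letting $h\to0$ in $\langle h^{-1}(M_0(t+h)-M_0(t))x\,|\,y\rangle=\langle x\,|\,h^{-1}(M_0(t+h)-M_0(t))y\rangle$ shows each $\dot M_0(t)$ is selfadjoint, and integrating this identity against the weight $\exp(-2\rho\,\cdot)$ gives selfadjointness of the bounded operator $\dot M_0(m_0)$.

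I expect the main obstacle to be the product rule on the core — specifically, making rigorous that the pointwise a.e.\ derivative of the merely Lipschitz (not $C^1$) $H$-valued map $t\mapsto M_0(t)u(t)$ coincides with its distributional derivative, which is what legitimises the step ``$g$ differentiable a.e.\ with bounded derivative, hence $g\in D(\partial_0)$ and $\partial_0g=g'$''. After that, the density/closedness extension to all of $D(\partial_0)$ and the two bounded-operator estimates are routine.
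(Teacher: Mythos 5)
Your proposal is correct and follows essentially the same route as the paper: the pointwise Lipschitz estimate for $\dot M_{0}(t)$, the splitting of the difference quotient of $t\mapsto M_{0}(t)\phi(t)$ on the core $\interior C_{\infty}(\mathbb{R};H)$ followed by a density/closedness argument, the commutator identity $\partial_{0}^{-1}M_{0}(m_{0})-M_{0}(m_{0})\partial_{0}^{-1}=-\partial_{0}^{-1}\dot M_{0}(m_{0})\partial_{0}^{-1}$ for the $H_{\rho,-1}$ bound, and the limit of sesquilinear forms for selfadjointness. You in fact supply two details the paper leaves implicit (strong measurability of $\dot M_{0}$ via difference quotients, and the passage from a.e.\ differentiability of a Lipschitz $H$-valued function to membership in $D(\partial_{0})$), both handled correctly.
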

\begin{proof}
Let $t\in\mathbb{R}\setminus N$. The linearity of $\dot{M}_{0}(t)$
is obvious. For $x\in H$ we estimate 
\[
\frac{1}{|h|}\left|\left(M_{0}(t+h)-M_{0}(t)\right)x\right|\leq|M_{0}|_{\mathrm{Lip}}|x|
\]
for each $h\in\mathbb{R}\setminus\{0\}.$ Thus, $|\dot{M}_{0}(t)x|=\left|\left(M_{0}(\cdot)x\right)'(t)\right|\leq|M_{0}|_{\mathrm{Lip}}|x|,$
which shows that $\dot{M}_{0}(t)\in L(H)$ with $\|\dot{M}_{0}(t)\|_{L(H)}\leq|M_{0}|_{\mathrm{Lip}}.$
Assuming property (a), we see that the selfadjointness of $\dot{M}_{0}(t)$
follows from 
\begin{eqnarray*}
\langle\dot{M}_{0}(t)x|y\rangle & = & \lim_{h\to0}\frac{1}{h}\langle\left(M_{0}(t+h)-M_{0}(t)\right)x|y\rangle\\
 & = & \lim_{h\to0}\frac{1}{h}\left\langle x\left|\left(M_{0}(t+h)-M_{0}(t)\right)y\right.\right\rangle \\
 & = & \langle x|\dot{M}_{0}(t)y\rangle
\end{eqnarray*}
for each $x,y\in H$. It is left to show the product rule (\ref{eq:product_rule}).
To this end, let $\phi\in\interior C_{\infty}(\mathbb{R};H).$ Then
we compute 
\begin{eqnarray*}
M_{0}(t+h)\phi(t+h)-M_{0}(t)\phi(t) & = & M_{0}(t+h)\left(\phi(t+h)-\phi(t)\right)+\left(M_{0}(t+h)-M_{0}(t)\right)\phi(t)
\end{eqnarray*}
for each $t,h\in\mathbb{R}.$ This yields
\begin{align*}
 & \frac{1}{h}\left(M_{0}(t+h)\phi(t+h)-M_{0}(t)\phi(t)\right)\\
 & =M_{0}(t+h)\left(\frac{1}{h}\left(\phi(t+h)-\phi(t)\right)\right)+\left(\frac{1}{h}\left(M_{0}(t+h)-M_{0}(t)\right)\right)\phi(t)
\end{align*}
 for every $t\in\mathbb{R},h\in\mathbb{R}\setminus\{0\}.$ The term
on the right-hand side in the latter formula tends to 
\[
M_{0}(t)\phi'(t)+\left(M_{0}(\cdot)\phi(t)\right)'(t)=\left(M_{0}(m_{0})\partial_{0}\phi\right)(t)+(\dot{M}_{0}(m_{0})\phi)(t)
\]
for $t\in\mathbb{R}\setminus N$ as $h\to0.$ Thus, the left-hand
side is differentiable almost everywhere and since $\dot{M}_{0}(m_{0})\phi+M_{0}(m_{0})\partial_{0}\phi\in H_{\rho,0}(\mathbb{R};H)$
we derive that $M_{0}(m_{0})\phi\in D(\partial_{0})$ and 
\[
\partial_{0}M_{0}(m_{0})\phi=\dot{M}_{0}(m_{0})\phi+M_{0}(m_{0})\partial_{0}\phi.
\]
The product rule (\ref{eq:product_rule}) for functions in $D(\partial_{0})$
now follows by approximation. To show that the operator $M_{0}(m_{0})$
can be established as a bounded operator on $H_{\rho,-1}(\mathbb{R};H)$
we observe that 
\[
\partial_{0}^{-1}M(m_{0})-M(m_{0})\partial_{0}^{-1}=\partial_{0}^{-1}\left(M(m_{0})\partial_{0}-\partial_{0}M(m_{0})\right)\partial_{0}^{-1}=-\partial_{0}^{-1}\dot{M}_{0}(m_{0})\partial_{0}^{-1}
\]
and thus, 
\begin{align*}
|M(m_{0})u|_{H_{\rho,-1}(\mathbb{R};H)} & =|\partial_{0}^{-1}M(m_{0})u|_{H_{\rho,0}(\mathbb{R};H)}\\
 & \leq|\partial_{0}^{-1}\dot{M}_{0}(m_{0})\partial_{0}^{-1}u|_{H_{\rho,0}(\mathbb{R};H)}+|M_{0}(m_{0})\partial_{0}^{-1}u|_{H_{\rho,0}(\mathbb{R};H)}\\
 & \leq\left(\frac{1}{\rho}|M_{0}|_{\mathrm{Lip}}+\|M_{0}(m_{0})\|_{L(H_{\rho,0}(\mathbb{R};H))}\right)|u|_{H_{\rho,-1}(\mathbb{R};H)}
\end{align*}
for each $u\in H_{\rho,0}(\mathbb{R};H).$\end{proof}
\begin{rem}
Note that the product rule 
\[
\partial_{0}M_{0}(m_{0})\phi=\dot{M}_{0}(m_{0})\phi+M_{0}(m_{0})\partial_{0}\phi
\]
for $\phi\in D(\partial_{0})$ can be extended by continuity to $\phi\in H_{\rho,0}(\mathbb{R};H)$.
Indeed, both the operators $\partial_{0}M_{0}(m_{0})$ and $\dot{M}_{0}(m_{0})+M_{0}(m_{0})\partial_{0}$
are densely defined continuous mappings from $H_{\rho,0}(\mathbb{R};H)$
to $H_{\rho,-1}(\mathbb{R};H)$ coinciding on the dense subset $H_{\rho,1}(\mathbb{R};H)\subseteq H_{\rho,0}(\mathbb{R};H)$. \end{rem}
\begin{cor}
\label{cor:domain-of-adjoint} Assume that $M_{0}$ satisfies properties
(\ref{Lipschitz}) and (\ref{differentiable}). Then 
\begin{eqnarray*}
\mathcal{D} & = & \left\{ u\in H_{\rho,0}(\mathbb{R};H)\,\left|\,\partial_{0}M_{0}(m_{0})u+Au\in H_{\rho,0}(\mathbb{R};H)\right.\right\} \\
 & = & \left\{ u\in H_{\rho,0}(\mathbb{R};H)\,\left|\, M_{0}(m_{0})\partial_{0}^{*}u-Au\in H_{\rho,0}(\mathbb{R};H)\right.\right\} .
\end{eqnarray*}
 Moreover, we have $(M_{0}(m_{0})\partial_{0}^{*}-A)u=(-\partial_{0}M_{0}(m_{0})+2\rho M_{0}(m_{0})+\dot{M}_{0}(m_{0})-A)u$
for all $u\in\mathcal{D}$.\end{cor}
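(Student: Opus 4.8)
The plan is to turn the asserted description of $\mathcal{D}$ into a single operator identity on the Sobolev lattice and then read off both the alternative characterization and the ``moreover'' part. The first line of the displayed equation is merely the definition of $\mathcal{D}$, so nothing is to be shown there. For the rest I would first record the shape of $\partial_{0}^{\ast}$: since $\partial_{0}=\partial_{0,\rho}=\mathcal{L}_{\rho}^{\ast}(\i m+\rho)\mathcal{L}_{\rho}$ with $\mathcal{L}_{\rho}$ unitary and $m$ self-adjoint, the Hilbert space adjoint in $H_{\rho,0}(\mathbb{R};H)$ is $\partial_{0}^{\ast}=\mathcal{L}_{\rho}^{\ast}(-\i m+\rho)\mathcal{L}_{\rho}=2\rho-\partial_{0}$, and by Proposition~\ref{prop:realization} the same holds for the unitary realizations on the chain; in particular $\partial_{0}^{\ast}=2\rho-\partial_{0}$ as a unitary mapping $H_{\rho,0}(\mathbb{R};H)\to H_{\rho,-1}(\mathbb{R};H)$.

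Next, fix $u\in H_{\rho,0}(\mathbb{R};H)$. Then $\partial_{0}^{\ast}u=2\rho u-\partial_{0}u\in H_{\rho,-1}(\mathbb{R};H)$, and since Lemma~\ref{lem:product rule} provides $M_{0}(m_{0})\in L(H_{\rho,-1}(\mathbb{R};H))$, applying $M_{0}(m_{0})$ and then invoking the product rule in the form extended to $H_{\rho,0}(\mathbb{R};H)$ (the Remark following Lemma~\ref{lem:product rule}), i.e.\ $M_{0}(m_{0})\partial_{0}u=\partial_{0}M_{0}(m_{0})u-\dot{M}_{0}(m_{0})u$ in $H_{\rho,-1}(\mathbb{R};H)$, I obtain
\[
M_{0}(m_{0})\partial_{0}^{\ast}u=2\rho M_{0}(m_{0})u-M_{0}(m_{0})\partial_{0}u=-\partial_{0}M_{0}(m_{0})u+2\rho M_{0}(m_{0})u+\dot{M}_{0}(m_{0})u
\]
in $H_{\rho,-1}(\mathbb{R};H)$. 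Subtracting $Au\in H_{\rho,0}(\mathbb{R};H_{-1}(A+1))$, the identity
\[
(M_{0}(m_{0})\partial_{0}^{\ast}-A)u=(-\partial_{0}M_{0}(m_{0})+2\rho M_{0}(m_{0})+\dot{M}_{0}(m_{0})-A)u
\]
holds in $H_{\rho,-1}(\mathbb{R};H_{-1}(A+1))$ for \emph{every} $u\in H_{\rho,0}(\mathbb{R};H)$, which yields the ``moreover'' statement (once one knows $u\in\mathcal{D}$, both sides lie in $H_{\rho,0}(\mathbb{R};H)$).

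Finally, for the set equality I would use that $M_{0}(m_{0})$ and $\dot{M}_{0}(m_{0})$ are bounded on $H_{\rho,0}(\mathbb{R};H)$, so that $2\rho M_{0}(m_{0})u+\dot{M}_{0}(m_{0})u\in H_{\rho,0}(\mathbb{R};H)$ for all $u\in H_{\rho,0}(\mathbb{R};H)$; hence, in the displayed identity, $M_{0}(m_{0})\partial_{0}^{\ast}u-Au\in H_{\rho,0}(\mathbb{R};H)$ if and only if $\partial_{0}M_{0}(m_{0})u+Au\in H_{\rho,0}(\mathbb{R};H)$, i.e.\ if and only if $u\in\mathcal{D}$. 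The only point requiring care — and the closest thing to an obstacle — is the bookkeeping on the Sobolev lattice: one must verify that $\partial_{0}$, $\partial_{0}^{\ast}$, $M_{0}(m_{0})$ and $A$ are each invoked as their unique continuous realizations between the appropriate spaces $H_{\rho,k}(\mathbb{R};H_{s}(A+1))$, so that the above chain of equalities genuinely takes place within one ambient extrapolation space and the extended product rule is legitimately applicable; given the constructions recalled in the preliminaries this is routine.
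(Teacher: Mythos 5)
Your proposal is correct and follows essentially the same route as the paper: both arguments rest on the identity $\partial_{0}^{*}=2\rho-\partial_{0}$, the product rule of Lemma \ref{lem:product rule} extended by continuity to all of $H_{\rho,0}(\mathbb{R};H)$, and the observation that $2\rho M_{0}(m_{0})u+\dot{M}_{0}(m_{0})u$ always lies in $H_{\rho,0}(\mathbb{R};H)$, so that the two membership conditions are equivalent. The only (cosmetic) difference is that you establish the operator identity in the extrapolation space first and read off the set equality afterwards, whereas the paper manipulates the membership condition directly and obtains the identity as a by-product.
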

\begin{proof}
Let $u\in H_{\rho,0}(\mathbb{R};H)$. Then $u\in\mathcal{D}$ if and
only if $(\partial_{0}M_{0}(m_{0})+A)u\in H_{\rho,0}(\mathbb{R};H)$.
Since $(-2\rho M_{0}(m_{0})-\dot{M}_{0}(m_{0}))u\in H_{\rho,0}(\mathbb{R};H)$,
we have that $(\partial_{0}M_{0}(m_{0})+A)u\in H_{\rho,0}(\mathbb{R};H)$
if and only if 
\begin{eqnarray*}
H_{\rho,0}(\mathbb{R};H) & \ni & (\partial_{0}M_{0}(m_{0})+A)u+(-2\rho M_{0}(m_{0})-\dot{M}_{0}(m_{0}))u\\
 & = & M_{0}(m_{0})\partial_{0}u+Au-2\rho M_{0}(m_{0})u\\
 & = & -(M_{0}(m_{0})\partial_{0}^{*}-A)u,
\end{eqnarray*}
 where we have used that $\partial_{0}^{*}=-\partial_{0}+2\rho,$
which can be verified immediately.\end{proof}
\begin{cor}
\label{cor:commutator} Let $\epsilon,\rho>0$. Assume that $M_{0}$
satisfies the properties (\ref{Lipschitz}) and (\ref{differentiable}).
Then for $u\in H_{\rho,0}(\mathbb{R};H)$ we have that 
\[
(1+\epsilon\partial_{0})^{-1}\partial_{0}M_{0}(m_{0})u=\partial_{0}M_{0}(m_{0})(1+\epsilon\partial_{0})^{-1}u-\epsilon\partial_{0}(1+\epsilon\partial_{0})^{-1}\dot{M}_{0}(m_{0})(1+\epsilon\partial_{0})^{-1}u.
\]
\end{cor}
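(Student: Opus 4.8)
The plan is to derive the identity from the commutator of the multiplication operator $M_0(m_0)$ with the bounded resolvent $B\coloneqq(1+\epsilon\partial_0)^{-1}$, and then to multiply by $\partial_0$. Formally, abbreviating $C\coloneqq\partial_0M_0(m_0)$ and using $B^{-1}=1+\epsilon\partial_0$, one has
\[
CB-BC=B\bigl((1+\epsilon\partial_0)C-C(1+\epsilon\partial_0)\bigr)B=\epsilon\,B\bigl(\partial_0C-C\partial_0\bigr)B,
\]
and since $\partial_0C-C\partial_0=\partial_0\bigl(\partial_0M_0(m_0)-M_0(m_0)\partial_0\bigr)=\partial_0\dot M_0(m_0)$ by the product rule of Lemma~\ref{lem:product rule}, rearranging yields exactly the asserted formula. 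The remaining work is to give this computation a rigorous meaning in the presence of unbounded operators.

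I would proceed in three steps. First, I collect the mapping properties used throughout: $B$ is a bounded function of $\partial_0$, hence bounded on each $H_{\rho,k}(\mathbb{R};H)$, maps $H_{\rho,k}(\mathbb{R};H)$ boundedly into $H_{\rho,k+1}(\mathbb{R};H)$ (because $\partial_0B=\tfrac1\epsilon(\identity-B)$ is again a bounded function of $\partial_0$), and commutes with $\partial_0$ on the Sobolev lattice; moreover $M_0(m_0)$ maps $D(\partial_0)=H_{\rho,1}(\mathbb{R};H)$ into itself and there the product rule $\partial_0M_0(m_0)\phi=\dot M_0(m_0)\phi+M_0(m_0)\partial_0\phi$ holds in $H_{\rho,0}(\mathbb{R};H)$, by Lemma~\ref{lem:product rule}. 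Second, for $\phi\in D(\partial_0)$ I would compute, using $(1+\epsilon\partial_0)B=\identity$ on $H_{\rho,0}(\mathbb{R};H)$, the fact that $M_0(m_0)\phi\in D(\partial_0)$, and the product rule,
\begin{align*}
(1+\epsilon\partial_0)\bigl(M_0(m_0)-BM_0(m_0)(1+\epsilon\partial_0)\bigr)\phi & =(1+\epsilon\partial_0)M_0(m_0)\phi-M_0(m_0)(1+\epsilon\partial_0)\phi\\
 & =\epsilon\,\dot M_0(m_0)\phi.
\end{align*}
Since the argument of the outer $1+\epsilon\partial_0$ lies in $D(\partial_0)$ and $1+\epsilon\partial_0$ is injective with bounded inverse $B$, applying $B$ gives $M_0(m_0)\phi-BM_0(m_0)(1+\epsilon\partial_0)\phi=\epsilon B\dot M_0(m_0)\phi$; writing $\psi\coloneqq(1+\epsilon\partial_0)\phi$, which ranges over all of $H_{\rho,0}(\mathbb{R};H)$ as $\phi$ ranges over $D(\partial_0)$, this reads $M_0(m_0)B\psi-BM_0(m_0)\psi=\epsilon B\dot M_0(m_0)B\psi$, i.e.\ $M_0(m_0)B-BM_0(m_0)=\epsilon B\dot M_0(m_0)B$ as operators on $H_{\rho,0}(\mathbb{R};H)$. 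Third, apply $\partial_0$ from the left, use $\partial_0B=B\partial_0$, and rearrange to obtain the claim for every $u\in H_{\rho,0}(\mathbb{R};H)$.

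The main obstacle is the domain bookkeeping in the ``sandwich'' step, and it dictates the order above. One cannot conjugate $C=\partial_0M_0(m_0)$ directly by $1+\epsilon\partial_0$: collapsing $B(1+\epsilon\partial_0)$ to the identity is only legitimate on $D(\partial_0)$, and $\partial_0M_0(m_0)w$ need not lie in $D(\partial_0)$, since $M_0$ is merely Lipschitz and $M_0(m_0)$ therefore does not preserve $H_{\rho,2}(\mathbb{R};H)$; for the same reason one may not first split the left-hand side via the product rule, as that would produce an ill-defined commutator $[\partial_0,\dot M_0(m_0)]$. The remedy is to perform the conjugation for $M_0(m_0)$ alone --- which \emph{does} map $D(\partial_0)$ into $D(\partial_0)$ --- so that every cancellation of $B$ against $1+\epsilon\partial_0$ acts on a vector in $D(\partial_0)$ or in $H_{\rho,0}(\mathbb{R};H)$, and to multiply by the remaining $\partial_0$ only at the very end. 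Everything else is routine, and the resulting identity holds on all of $H_{\rho,0}(\mathbb{R};H)$, as the construction shows directly.
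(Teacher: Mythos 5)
Your argument is correct, and its engine is the same as the paper's: sandwich a commutator between $(1+\epsilon\partial_{0})^{-1}$ and $(1+\epsilon\partial_{0})$, cancel, and reduce the surviving commutator $M_{0}(m_{0})\partial_{0}-\partial_{0}M_{0}(m_{0})$ to $-\dot{M}_{0}(m_{0})$ via the product rule (\ref{eq:product_rule}). The difference is organizational: the paper conjugates the \emph{full} operator $\partial_{0}M_{0}(m_{0})$ in one stroke, whereas you conjugate $M_{0}(m_{0})$ alone, obtain $M_{0}(m_{0})B-BM_{0}(m_{0})=\epsilon B\dot{M}_{0}(m_{0})B$ on $H_{\rho,0}(\mathbb{R};H)$, and only then apply $\partial_{0}$. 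Your variant has the virtue of keeping almost the entire computation inside $H_{\rho,0}(\mathbb{R};H)$ and $D(\partial_{0})$. However, your claim that one \emph{cannot} conjugate $\partial_{0}M_{0}(m_{0})$ directly is too strong: the paper's computation is not carried out in $H_{\rho,0}(\mathbb{R};H)$ but in the Sobolev lattice, where $(1+\epsilon\partial_{0})^{-1}(1+\epsilon\partial_{0})=1$ holds as a composition $H_{\rho,k}\to H_{\rho,k-1}\to H_{\rho,k}$ and where $\partial_{0}M_{0}(m_{0})$ is continuous from $H_{\rho,0}(\mathbb{R};H)$ to $H_{\rho,-1}(\mathbb{R};H)$ precisely because Lemma \ref{lem:product rule} establishes $M_{0}(m_{0})\in L\left(H_{\rho,-1}(\mathbb{R};H)\right)$; no membership of $\partial_{0}M_{0}(m_{0})w$ in $D(\partial_{0})$ is ever needed. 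Note also that you cannot fully escape the extrapolation spaces yourself: the left-hand side $(1+\epsilon\partial_{0})^{-1}\partial_{0}M_{0}(m_{0})u$ of the assertion only makes sense for general $u\in H_{\rho,0}(\mathbb{R};H)$ with $\partial_{0}M_{0}(m_{0})u\in H_{\rho,-1}(\mathbb{R};H)$, and your final commutation $\partial_{0}B\,M_{0}(m_{0})u=B\,\partial_{0}M_{0}(m_{0})u$ is exactly the lattice identity you were trying to avoid. So the two proofs end up resting on the same foundations; yours merely postpones the appeal to them to the last line.
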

\begin{proof}
For $u\in H_{\rho,0}(\mathbb{R};H$), we compute, invoking the product
rule (\ref{eq:product_rule}), that 
\begin{align*}
 & \left((1+\epsilon\partial_{0})^{-1}\partial_{0}M_{0}(m_{0})-\partial_{0}M_{0}(m_{0})(1+\epsilon\partial_{0})^{-1}\right)u\\
 & =(1+\epsilon\partial_{0})^{-1}\left(\partial_{0}M_{0}(m_{0})(1+\epsilon\partial_{0})-(1+\epsilon\partial_{0})\partial_{0}M_{0}(m_{0})\right)(1+\epsilon\partial_{0})^{-1}u\\
 & =(1+\epsilon\partial_{0})^{-1}\left(\partial_{0}M_{0}(m_{0})\epsilon\partial_{0}-\epsilon\partial_{0}\left(\partial_{0}M_{0}(m_{0})\right)\right)(1+\epsilon\partial_{0})^{-1}u\\
 & =(1+\epsilon\partial_{0})^{-1}\epsilon\partial_{0}\left(M_{0}(m_{0})\partial_{0}-\partial_{0}M_{0}(m_{0})\right)(1+\epsilon\partial_{0})^{-1}u\\
 & =\epsilon\partial_{0}(1+\epsilon\partial_{0})^{-1}(-\dot{M}_{0}(m_{0}))(1+\epsilon\partial_{0})^{-1}u.\tag*{\qedhere}
\end{align*}

\end{proof}
In the spirit of the solution theory in \cite[Chapter 6]{Picard_McGhee},
we will require the following positive definiteness constraint on
the operators $M_{0},\dot{M}_{0}$ and $M_{1}$: there exists a set
$N_{1}\subseteq\mathbb{R}$ of measure zero with $N\subseteq N_{1}$
such that
\begin{equation}
\exists c_{0}>0,\rho_{0}>0\:\forall t\in\mathbb{R}\setminus N_{1},\rho\geq\rho_{0}:\rho M_{0}(t)+\frac{1}{2}\dot{M}_{0}(t)+\Re M_{1}(t)\geq c_{0}.\label{eq:pos_def}
\end{equation}

From this we derive the following estimate.
\begin{lem}
\label{lem:pos_def_d0} Assume that $M_{0}$ satisfies properties
(\ref{selfadjoint})-(\ref{differentiable}). Assume that inequality
(\ref{eq:pos_def}) holds and let $\rho\geq\rho_{0}.$ Then for $u\in D(\partial_{0})\cap D(A)$
and $a\in\mathbb{R}$ we have that
\begin{equation}
\intop_{-\infty}^{a}\Re\langle\partial_{0}M_{0}(m_{0})u+M_{1}(m_{0})u+Au|u\rangle(t)e^{-2\rho t}\,\mathrm{d}t\geq c_{0}\intop_{-\infty}^{a}|u(t)|^{2}e^{-2\rho t}\,\mathrm{d}t.\label{eq:pos_def_1}
\end{equation}

\end{lem}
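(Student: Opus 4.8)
The plan is to reduce the claimed integral inequality to the pointwise positive-definiteness hypothesis \eqref{eq:pos_def} via a weighted integration-by-parts argument, with the cut-off at time $a$ handled by an approximation through smooth truncations. First I would treat the term $\langle Au|u\rangle$: since $A$ is skew-selfadjoint, $\Re\langle Au|u\rangle = 0$ pointwise, so this term contributes nothing and can be dropped immediately. What remains is to show
\[
\intop_{-\infty}^{a}\Re\langle\partial_{0}M_{0}(m_{0})u+M_{1}(m_{0})u\,|\,u\rangle(t)\,e^{-2\rho t}\,\mathrm{d}t\geq c_{0}\intop_{-\infty}^{a}|u(t)|^{2}e^{-2\rho t}\,\mathrm{d}t.
\]
For the $M_{1}$ term this is trivial: $\Re\langle M_{1}(m_{0})u|u\rangle(t)=\langle(\Re M_{1}(t))u(t)|u(t)\rangle$ pointwise.

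The heart of the matter is the term with $\partial_{0}M_{0}(m_{0})u$. Using the product rule \eqref{eq:product_rule} from Lemma~\ref{lem:product rule}, write $\partial_{0}M_{0}(m_{0})u=\dot M_{0}(m_{0})u+M_{0}(m_{0})\partial_{0}u$. The $\dot M_0$ piece is already in the right form pointwise, contributing $\langle\dot M_0(t)u(t)|u(t)\rangle$. For the remaining piece $\Re\langle M_0(m_0)\partial_0 u|u\rangle(t)e^{-2\rho t}$, I would observe that, since $M_0(t)$ is selfadjoint,
\[
\frac{\mathrm d}{\mathrm dt}\bigl(\langle M_0(t)u(t)|u(t)\rangle e^{-2\rho t}\bigr)
= \langle\dot M_0(t)u(t)|u(t)\rangle e^{-2\rho t} + 2\Re\langle M_0(t)u'(t)|u(t)\rangle e^{-2\rho t} - 2\rho\langle M_0(t)u(t)|u(t)\rangle e^{-2\rho t},
\]
so that
\[
\Re\langle M_0(m_0)\partial_0 u|u\rangle(t)e^{-2\rho t}
= \tfrac12\frac{\mathrm d}{\mathrm dt}\bigl(\langle M_0(t)u(t)|u(t)\rangle e^{-2\rho t}\bigr) - \tfrac12\langle\dot M_0(t)u(t)|u(t)\rangle e^{-2\rho t} + \rho\langle M_0(t)u(t)|u(t)\rangle e^{-2\rho t}.
\]
Integrating over $\rla a$, the two half-copies of $\dot M_0$ combine to the single $\tfrac12\dot M_0$ appearing in \eqref{eq:pos_def}, and one collects exactly $\rho M_0(t)+\tfrac12\dot M_0(t)+\Re M_1(t)\ge c_0$ in the integrand, which gives the desired bound — provided the boundary term $\tfrac12\langle M_0(a)u(a)|u(a)\rangle e^{-2\rho a}$, and the limit at $-\infty$, are non-negative and vanishing respectively. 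Non-negativity of the boundary term at $a$ is exactly property \eqref{non-negative} (which is included in the hypothesis via (\ref{selfadjoint})--(\ref{differentiable})); and the contribution at $-\infty$ vanishes because $t\mapsto\langle M_0(t)u(t)|u(t)\rangle e^{-2\rho t}$ is integrable (its derivative is, by the computation above, a sum of $H_{\rho,0}$-type integrands) so it has a subsequential limit $0$ at $-\infty$, and monotonicity of the truncated integrals then forces the full limit.

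The main obstacle is the rigor of the integration by parts and the boundary term at $a$, since a priori $u\in D(\partial_0)\cap D(A)$ only guarantees $u\in H_{\rho,1}(\mathbb{R};H)$, so $t\mapsto u(t)$ has an $H$-valued representative that is absolutely continuous but $t\mapsto M_0(t)u(t)$ need not be classically differentiable at every point. I would handle this by first proving the identity for $u\in\interior C_\infty(\mathbb{R};H)$ (or for $u$ in a core on which $M_0(\cdot)u(\cdot)$ is genuinely differentiable a.e.\ with an $L^1_{\mathrm{loc}}$ derivative, using property (\ref{differentiable}) and the fundamental theorem of calculus for absolutely continuous $H$-valued functions), where everything is legitimate, and then passing to general $u\in D(\partial_0)\cap D(A)$ by density: both sides of \eqref{eq:pos_def_1} are continuous in $u$ with respect to the graph norm of $\partial_0 M_0(m_0)+M_1(m_0)+A$ (for the left side one uses $|\langle f|u\rangle(t)e^{-2\rho t}|$ integrated against the indicator of $\rla a$ together with Cauchy--Schwarz, noting the dependence is jointly continuous in $(f,u)\in H_{\rho,0}\times H_{\rho,0}$), and similarly for the right side. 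A small technical point in the density step is that one must choose the approximating sequence so that it converges in the graph norm, which is available since $\mathcal D$ equipped with that norm is a Hilbert space and $\interior C_\infty(\mathbb{R};H)\cap D(A)$-type functions are a core for $\partial_0 M_0(m_0)+M_1(m_0)+A$ — a fact that, if not already cited, follows from mollification in the time variable commuting suitably with $A$ and with the multiplication operators, using Corollary~\ref{cor:commutator}.
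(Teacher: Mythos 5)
Your argument is correct and is essentially the paper's own proof: the paper isolates your weighted integration-by-parts computation as a separate identity (Lemma \ref{lem:integr_eq}), producing exactly the boundary term $\tfrac{1}{2}\langle u(a)|M_{0}(a)u(a)\rangle e^{-2\rho a}\geq0$ plus the integrand $\rho M_{0}(t)+\tfrac{1}{2}\dot{M}_{0}(t)+\Re M_{1}(t)\geq c_{0}$, and then concludes as you do. Your final density step is an admissible but unnecessary detour, since the product rule of Lemma \ref{lem:product rule} is already valid on all of $D(\partial_{0})$, so $t\mapsto\langle M_{0}(t)u(t)|u(t)\rangle$ is absolutely continuous for every $u\in D(\partial_{0})\cap D(A)$ and the integration by parts is directly legitimate.
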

For the proof of the lemma, we need the following.
\begin{lem}
\label{lem:integr_eq} Let $\rho>0.$ Assume that $M_{0}$ satisfies
the properties (\ref{selfadjoint}), (\ref{Lipschitz}) and (\ref{differentiable}).
Then for $u\in D(\partial_{0})\cap D(A)$ and $a\in\mathbb{R}$ the
following equality holds:
\begin{align}
 & \intop_{-\infty}^{a}\Re\langle\partial_{0}M_{0}(m_{0})u+M_{1}(m_{0})u+Au|u\rangle(t)e^{-2\rho t}\,\mathrm{d}t\nonumber \\
 & =\frac{1}{2}\langle u(a)|M_{0}(a)u(a)\rangle e^{-2\rho a}+\intop_{-\infty}^{a}\rho\langle u(t)|M_{0}(t)u(t)\rangle e^{-2\rho t}\,\mathrm{d}t\label{eq:integr_equ}\\
 & \quad+\intop_{-\infty}^{a}\left\langle \left.\frac{1}{2}\dot{M}_{0}(t)u(t)+\Re M_{1}(t)u(t)\right|u(t)\right\rangle e^{-2\rho t}\,\mathrm{d}t.\nonumber 
\end{align}
\end{lem}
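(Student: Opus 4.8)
The plan is to reduce the asserted identity (\ref{eq:integr_equ}) to the fundamental theorem of calculus for the scalar function $g(t)\coloneqq\langle u(t)\,|\,M_{0}(t)u(t)\rangle$, used together with the product rule (\ref{eq:product_rule}) and the skew-selfadjointness of $A$. As a preliminary observation, $u\in D(\partial_{0})$ means $u$ and its distributional derivative both lie in $H_{\rho,0}(\mathbb{R};H)$, so $u$ has a locally absolutely continuous representative with $u'=\partial_{0}u$ a.e.; in particular the value $u(a)$ is meaningful and $t\mapsto|u(t)|^{2}e^{-2\rho t}$ is integrable.

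First I would establish that $g$ is locally absolutely continuous with a product-rule derivative. Since $M_{0}$ is bounded and Lipschitz-continuous (property (\ref{Lipschitz})) and $u$ is locally absolutely continuous and locally bounded, the map $t\mapsto M_{0}(t)u(t)$ is locally absolutely continuous, via $|M_{0}(t)u(t)-M_{0}(s)u(s)|\leq\|M_{0}\|_{\infty}|u(t)-u(s)|+|M_{0}|_{\mathrm{Lip}}|t-s|\sup_{K}|u|$ on a compact interval $K$; and at a.e.\ $t$ --- namely for $t\notin N$ at which $u$ is differentiable --- property (\ref{differentiable}), together with $M_{0}(t+h)\to M_{0}(t)$ in operator norm, gives $\bigl(M_{0}(\cdot)u(\cdot)\bigr)'(t)=M_{0}(t)u'(t)+\dot{M}_{0}(t)u(t)$. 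Differentiating the inner product and using the selfadjointness of $M_{0}(t)$ and (by Lemma \ref{lem:product rule}) of $\dot{M}_{0}(t)$ then yields
\[
 g'(t)=2\Re\langle M_{0}(t)u'(t)\,|\,u(t)\rangle+\langle\dot{M}_{0}(t)u(t)\,|\,u(t)\rangle\qquad\text{for a.e.\ }t.
\]

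Next I would integrate $\tfrac{\mathrm{d}}{\mathrm{d}t}\bigl(g(t)e^{-2\rho t}\bigr)=\bigl(g'(t)-2\rho g(t)\bigr)e^{-2\rho t}$ over $\rla a$. This is legitimate because the right-hand side lies in $L^{1}(\rla a)$: the summand $2\Re\langle M_{0}(t)u'(t)|u(t)\rangle e^{-2\rho t}$ is dominated by $2\|M_{0}\|_{\infty}\bigl(|u'(t)|e^{-\rho t}\bigr)\bigl(|u(t)|e^{-\rho t}\bigr)$, a product of two $L^{2}(\mathbb{R})$-functions, and the remaining ones by $\const\cdot|u(t)|^{2}e^{-2\rho t}$. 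Hence $g(\cdot)e^{-2\rho\,\cdot}$ has a limit as $t\to-\infty$, which is forced to be $0$ because $|g(t)e^{-2\rho t}|\leq\|M_{0}\|_{\infty}|u(t)|^{2}e^{-2\rho t}$ is integrable near $-\infty$. The fundamental theorem of calculus then gives
\[
 \tfrac12\langle u(a)|M_{0}(a)u(a)\rangle e^{-2\rho a}=\intop_{-\infty}^{a}\Bigl(\Re\langle M_{0}(t)u'(t)|u(t)\rangle+\tfrac12\langle\dot{M}_{0}(t)u(t)|u(t)\rangle-\rho\langle u(t)|M_{0}(t)u(t)\rangle\Bigr)e^{-2\rho t}\,\mathrm{d}t.
\]

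Finally I would reassemble the three contributions in the left-hand side of the claim. By the product rule (\ref{eq:product_rule}), $\Re\langle\partial_{0}M_{0}(m_{0})u|u\rangle=\langle\dot{M}_{0}(m_{0})u|u\rangle+\Re\langle M_{0}(m_{0})\partial_{0}u|u\rangle$ pointwise (the first term real, as $\dot{M}_{0}(t)$ is selfadjoint); inserting the displayed identity with $u'=\partial_{0}u$ reproduces exactly the $M_{0}$- and $\tfrac12\dot{M}_{0}$-contributions on the right of (\ref{eq:integr_equ}). For the $M_{1}$-term one uses $\Re\langle M_{1}(t)x|x\rangle=\langle\Re M_{1}(t)x|x\rangle$. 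And $\intop_{-\infty}^{a}\Re\langle Au|u\rangle e^{-2\rho t}\,\mathrm{d}t=\Re\langle Au\,|\,\chi_{(-\infty,a)}(m_{0})u\rangle_{H_{\rho,0}(\mathbb{R};H)}=0$: since $A$ acts only in the $H$-variable it commutes with the bounded selfadjoint multiplication operator $\chi_{(-\infty,a)}(m_{0})$, so $\chi_{(-\infty,a)}(m_{0})u\in D(A)$, and skew-selfadjointness of $A$ forces this inner product to be purely imaginary. Adding the three contributions gives (\ref{eq:integr_equ}). The step I expect to be the main obstacle is proving that $t\mapsto M_{0}(t)u(t)$, and hence $g$, is genuinely absolutely continuous --- not just a.e.\ differentiable --- so that the fundamental theorem of calculus applies, together with the a.e.\ product rule for $\bigl(M_{0}(\cdot)u(\cdot)\bigr)'$ drawn from properties (\ref{Lipschitz}) and (\ref{differentiable}); the vanishing of the boundary term at $-\infty$ is a secondary point, needing the integrability estimate of the third step rather than a naive pointwise limit.
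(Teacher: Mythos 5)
Your proof is correct and follows essentially the same route as the paper's: the product rule (\ref{eq:product_rule}), the selfadjointness of $M_{0}(t)$ and $\dot{M}_{0}(t)$, the pointwise vanishing of $\Re\langle Au|u\rangle$, and integration by parts (fundamental theorem of calculus) applied to $t\mapsto\langle u(t)|M_{0}(t)u(t)\rangle e^{-2\rho t}$. You are in fact somewhat more explicit than the paper about the technical justifications (local absolute continuity of $g$, integrability of the integrand, and the vanishing of the boundary term at $-\infty$), which the paper leaves implicit.
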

\begin{proof}
Let $u\in D(\partial_{0})\cap D(A)$ and $a\in\mathbb{R}.$ Note that,
since $A$ is skew-selfadjoint, 
\[
\Re\langle Au|u\rangle(t)=0
\]
for almost every $t\in\mathbb{R}.$ Hence, the left-hand side in (\ref{eq:integr_equ})
equals 
\[
\intop_{-\infty}^{a}\Re\langle\partial_{0}M_{0}(m_{0})u+M_{1}(m_{0})u|u\rangle(t)e^{-2\rho t}\,\mathrm{d}t.
\]
Using the product rule (\ref{eq:product_rule}) and the selfadjointness
of $\dot{M}_{0}(t)$ for almost every $t\in\mathbb{R}$ we get that
\begin{align*}
 & 2\intop_{-\infty}^{a}\Re\langle\partial_{0}M_{0}(m_{0})u+M_{1}(m_{0})u|u\rangle(t)e^{-2\rho t}\,\mathrm{d}t\\
 & =2\intop_{-\infty}^{a}\Re\langle M_{0}(t)\partial_{0}u(t)|u(t)\rangle e^{-2\rho t}\mbox{ d}t+2\intop_{-\infty}^{a}\langle\dot{M}_{0}(t)u(t)+\Re M_{1}(t)u(t)|u(t)\rangle e^{-2\rho t}\mbox{ d}t\\
 & =\intop_{-\infty}^{a}\Re\langle\partial_{0}u(t)|M_{0}(t)u(t)\rangle e^{-2\rho t}\mbox{ d}t\\
 & \quad+\intop_{-\infty}^{a}\Re\langle M_{0}(t)\partial_{0}u(t)|u(t)\rangle e^{-2\rho t}\mbox{ d}t+2\intop_{-\infty}^{a}\langle\dot{M}_{0}(t)u(t)+\Re M_{1}(t)u(t)|u(t)\rangle e^{-2\rho t}\mbox{ d}t\\
 & =\intop_{-\infty}^{a}\langle u(\cdot)|\left(M_{0}(m_{0})u\right)(\cdot)\rangle'(t)e^{-2\rho t}\mbox{ d}t-\intop_{-\infty}^{a}\Re\langle u(t)|\left(\partial_{0}M_{0}(m_{0})u\right)(t)\rangle e^{-2\rho t}\mbox{ d}t\\
 & \quad+\intop_{-\infty}^{a}\Re\langle M_{0}(t)\partial_{0}u(t)|u(t)\rangle e^{-2\rho t}\mbox{ d}t+2\intop_{-\infty}^{a}\langle\dot{M}_{0}(t)u(t)+\Re M_{1}(t)u(t)|u(t)\rangle e^{-2\rho t}\mbox{ d}t.
\end{align*}
Using again the product rule (\ref{eq:product_rule}) we obtain
\begin{align*}
 & -\intop_{-\infty}^{a}\Re\langle u(t)|\left(\partial_{0}M_{0}(m_{0})u\right)(t)\rangle e^{-2\rho t}\mbox{ d}t+\intop_{-\infty}^{a}\Re\langle M_{0}(t)\partial_{0}u(t)|u(t)\rangle e^{-2\rho t}\mbox{ d}t\\
 & =-\intop_{-\infty}^{a}\langle\dot{M}_{0}(t)u(t)|u(t)\rangle e^{-2\rho t}\mbox{ d}t.
\end{align*}
 Hence, we arrive at 
\begin{align*}
 & 2\intop_{-\infty}^{a}\Re\langle\partial_{0}M_{0}(m_{0})u+M_{1}(m_{0})u|u\rangle(t)e^{-2\rho t}\,\mathrm{d}t\\
 & =\intop_{-\infty}^{a}\langle u(\cdot)|\left(M_{0}(m_{0})u\right)(\cdot)\rangle'(t)e^{-2\rho t}\mbox{ d}t+2\intop_{-\infty}^{a}\left\langle \left.\frac{1}{2}\dot{M}_{0}(t)u(t)+\Re M_{1}(t)u(t)\right|u(t)\right\rangle e^{-2\rho t}\mbox{ d}t\\
 & =\langle u(a)|M_{0}(a)u(a)\rangle e^{-2\rho a}+\intop_{-\infty}^{a}2\rho\langle u(t)|M_{0}(t)u(t)\rangle e^{-2\rho t}\mbox{ d}t\\
 & \quad+2\intop_{-\infty}^{a}\left\langle \left.\frac{1}{2}\dot{M}_{0}(t)u(t)+\Re M_{1}(t)u(t)\right|u(t)\right\rangle e^{-2\rho t}\mbox{ d}t,
\end{align*}
where we have used integration by parts.
\end{proof}
~
\begin{proof}[Proof of Lemma \ref{lem:pos_def_d0}]
 Using Lemma \ref{lem:integr_eq} and the fact that $M_{0}(a)$ is
non-negative we end up with 
\begin{align*}
 & \intop_{-\infty}^{a}\Re\langle\partial_{0}M_{0}(m_{0})u+M_{1}(m_{0})u+Au|u\rangle(t)e^{-2\rho t}\,\mathrm{d}t\\
 & \geq\intop_{-\infty}^{a}\left\langle \left.\left(\rho M_{0}(t)+\frac{1}{2}\dot{M}_{0}(t)+\Re M_{1}(t)\right)u(t)\right|u(t)\right\rangle e^{-2\rho t}\mbox{ d}t\\
 & \geq c_{0}\intop_{-\infty}^{a}|u(t)|^{2}e^{-2\rho t}\mbox{ d}t.\tag*{\qedhere}
\end{align*}

\end{proof}
Our next goal is to show that (\ref{eq:pos_def_1}) also holds for
elements in $\mathcal{D}.$ For doing so, we need to approximate elements
in $\mathcal{D}$ by elements in $D(\partial_{0})\cap D(A)$ in a
suitable way.
\begin{lem}
\label{lem:resolvent} For each $u\in H_{\rho,0}(\mathbb{R};H)$ we
have that $(1+\varepsilon\partial_{0})^{-1}u\to u$ as $\varepsilon\to0+.$ \end{lem}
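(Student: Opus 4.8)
The plan is a routine approximation argument whose only real ingredient is a uniform bound on the regularisers $(1+\varepsilon\partial_{0})^{-1}$. First I would record that for each $\varepsilon>0$ the operator $1+\varepsilon\partial_{0}$ is boundedly invertible on $H_{\rho,0}(\mathbb{R};H)$ with
\[
\|(1+\varepsilon\partial_{0})^{-1}\|_{L(H_{\rho,0}(\mathbb{R};H))}\le 1 .
\]
This is immediate from the spectral representation $\partial_{0}=\mathcal{L}_{\rho}^{\ast}(\i m+\rho)\mathcal{L}_{\rho}$ (lifted to the $H$-valued case): under $\mathcal{L}_{\rho}$ the operator $(1+\varepsilon\partial_{0})^{-1}$ becomes multiplication by the function $x\mapsto(1+\varepsilon(\i x+\rho))^{-1}$, whose modulus equals $((1+\varepsilon\rho)^{2}+\varepsilon^{2}x^{2})^{-1/2}\le 1$. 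Alternatively, from $\partial_{0}^{\ast}=-\partial_{0}+2\rho$ one gets $\Re\langle\partial_{0}v|v\rangle=\rho|v|^{2}$ for $v\in D(\partial_{0})$, hence $|(1+\varepsilon\partial_{0})v|\ge(1+\varepsilon\rho)|v|\ge|v|$, and the same estimate applied to $1+\varepsilon\partial_{0}^{\ast}$ yields surjectivity.

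Next I would prove the convergence on the dense subspace $D(\partial_{0})=H_{\rho,1}(\mathbb{R};H)$, which contains $\interior C_{\infty}(\mathbb{R};H)$ and is therefore dense in $H_{\rho,0}(\mathbb{R};H)$. For $v\in D(\partial_{0})$ one computes
\[
(1+\varepsilon\partial_{0})^{-1}v-v=(1+\varepsilon\partial_{0})^{-1}\bigl(v-(1+\varepsilon\partial_{0})v\bigr)=-\varepsilon\,(1+\varepsilon\partial_{0})^{-1}\partial_{0}v ,
\]
so that $|(1+\varepsilon\partial_{0})^{-1}v-v|_{H_{\rho,0}(\mathbb{R};H)}\le\varepsilon\,|\partial_{0}v|_{H_{\rho,0}(\mathbb{R};H)}\to 0$ as $\varepsilon\to 0+$.

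Finally I would pass to an arbitrary $u\in H_{\rho,0}(\mathbb{R};H)$ by the usual $3\delta$-argument: given $\delta>0$, choose $v\in D(\partial_{0})$ with $|u-v|_{H_{\rho,0}(\mathbb{R};H)}<\delta/3$; then, using the uniform bound,
\[
|(1+\varepsilon\partial_{0})^{-1}u-u|\le|(1+\varepsilon\partial_{0})^{-1}(u-v)|+|(1+\varepsilon\partial_{0})^{-1}v-v|+|v-u|\le 2|u-v|+\varepsilon|\partial_{0}v| ,
\]
which is $<\delta$ for all sufficiently small $\varepsilon>0$. Since $\delta$ was arbitrary, this gives $(1+\varepsilon\partial_{0})^{-1}u\to u$ as $\varepsilon\to 0+$. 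There is no genuine obstacle here; the only point that deserves to be stated explicitly — and which I would put up front — is the uniform operator bound $\|(1+\varepsilon\partial_{0})^{-1}\|\le 1$, since the rest is the standard mechanism that uniformly bounded operators converging strongly on a dense set converge strongly everywhere.
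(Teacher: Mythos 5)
Your proof is correct and follows essentially the same route as the paper's: a uniform bound on the family $(1+\varepsilon\partial_{0})^{-1}$ combined with the identity $(1+\varepsilon\partial_{0})^{-1}v-v=-\varepsilon(1+\varepsilon\partial_{0})^{-1}\partial_{0}v$ on the dense subspace $H_{\rho,1}(\mathbb{R};H)$, followed by the standard density argument. You simply spell out the contraction bound and the $3\delta$-step that the paper leaves implicit (and you even correct a harmless sign slip in the paper's displayed identity).
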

\begin{proof}
Since the operator family $\left((1+\varepsilon\partial_{0})^{-1}\right)_{\epsilon>0}$
is uniformly bounded, it suffices to note that 
\[
(1+\varepsilon\partial_{0})^{-1}u-u=(1+\varepsilon\partial_{0})^{-1}\epsilon\partial_{0}u\to0
\]
as $\epsilon\to0+$ for every $u\in H_{\rho,1}\left(\mathbb{R};H\right).$ \end{proof}
\begin{rem}
It should be noted that literally the same result holds true for $\partial_{0}$
replaced by $\partial_{0}^{*}$. The proof follows with obvious modifications. \end{rem}
\begin{lem}
\label{lem:regularizing} Let $\epsilon>0$ and let $u\in\mathcal{D}$.
Then $(1+\varepsilon\partial_{0})^{-1}u\in D(\partial_{0})\cap D(A)$
and the following formula holds
\begin{align}
 & \left(1+\epsilon\partial_{0}\right)^{-1}\left(\partial_{0}M_{0}\left(m_{0}\right)+M_{1}\left(m_{0}\right)+A\right)u\nonumber \\
 & =\left(\partial_{0}M_{0}\left(m_{0}\right)+M_{1}\left(m_{0}\right)+A\right)\left(1+\epsilon\partial_{0}\right)^{-1}u-\epsilon\partial_{0}\left(1+\epsilon\partial_{0}\right)^{-1}\dot{M}_{0}\left(m_{0}\right)\left(1+\epsilon\partial_{0}\right)^{-1}u\nonumber \\
 & \quad+\left(1+\epsilon\partial_{0}\right)^{-1}M_{1}\left(m_{0}\right)u-M_{1}(m_{0})\left(1+\epsilon\partial_{0}\right)^{-1}u.\label{eq:commutator_with_whole_operator}
\end{align}

Moreover, we have 
\[
\left(\partial_{0}M_{0}\left(m_{0}\right)+M_{1}\left(m_{0}\right)+A\right)\left(1+\epsilon\partial_{0}\right)^{-1}u\rightharpoondown\left(\partial_{0}M_{0}\left(m_{0}\right)+M_{1}\left(m_{0}\right)+A\right)u\quad(\varepsilon\to0+)
\]

in $H_{\rho,0}(\mathbb{R};H).$\end{lem}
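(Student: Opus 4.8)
The plan is to prove the three assertions in turn: that $(1+\epsilon\partial_{0})^{-1}u$ lies in $D(\partial_{0})\cap D(A)$ (in fact in $\mathcal{D}$), then the commutator identity (\ref{eq:commutator_with_whole_operator}), and finally the limit $\epsilon\to0+$. For the regularity I would first record two elementary observations. On the one hand, $(1+\epsilon\partial_{0})^{-1}$ maps $H_{\rho,0}(\mathbb{R};H)$ into $D(\partial_{0})$, since $\partial_{0}(1+\epsilon\partial_{0})^{-1}=\frac{1}{\epsilon}\bigl(1-(1+\epsilon\partial_{0})^{-1}\bigr)$ is bounded. On the other hand, $\mathcal{D}\cap D(\partial_{0})\subseteq D(A)$: if $w\in\mathcal{D}\cap D(\partial_{0})$, then by Lemma \ref{lem:product rule} $M_{0}(m_{0})w\in D(\partial_{0})$ and $\partial_{0}M_{0}(m_{0})w=\dot{M}_{0}(m_{0})w+M_{0}(m_{0})\partial_{0}w\in H_{\rho,0}(\mathbb{R};H)$, whence $Aw=(\partial_{0}M_{0}(m_{0})+M_{1}(m_{0})+A)w-\partial_{0}M_{0}(m_{0})w-M_{1}(m_{0})w\in H_{\rho,0}(\mathbb{R};H)$.

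It therefore suffices to prove $(1+\epsilon\partial_{0})^{-1}u\in\mathcal{D}$, and this is where Corollary \ref{cor:commutator} does the work. Since $\partial_{0}$ and $A$ act on different tensor factors of $H_{\rho,0}(\mathbb{R};H)\cong H_{\rho,0}(\mathbb{R})\otimes H$, the resolvent $(1+\epsilon\partial_{0})^{-1}$ commutes with $A$, and this commutation persists on the extrapolation space $H_{\rho,-1}(\mathbb{R};H_{-1}(A+1))$ in which $(\partial_{0}M_{0}(m_{0})+A)u$ a priori lives. Rewriting Corollary \ref{cor:commutator} as
\[
\partial_{0}M_{0}(m_{0})(1+\epsilon\partial_{0})^{-1}u=(1+\epsilon\partial_{0})^{-1}\partial_{0}M_{0}(m_{0})u+\epsilon\partial_{0}(1+\epsilon\partial_{0})^{-1}\dot{M}_{0}(m_{0})(1+\epsilon\partial_{0})^{-1}u
\]
and adding $A(1+\epsilon\partial_{0})^{-1}u=(1+\epsilon\partial_{0})^{-1}Au$, one finds that $(\partial_{0}M_{0}(m_{0})+A)(1+\epsilon\partial_{0})^{-1}u$ equals $(1+\epsilon\partial_{0})^{-1}(\partial_{0}M_{0}(m_{0})+A)u$ plus $\epsilon\partial_{0}(1+\epsilon\partial_{0})^{-1}\dot{M}_{0}(m_{0})(1+\epsilon\partial_{0})^{-1}u$; both summands lie in $H_{\rho,0}(\mathbb{R};H)$ (the first because $u\in\mathcal{D}$, the second because $\dot{M}_{0}(m_{0})$ and $\epsilon\partial_{0}(1+\epsilon\partial_{0})^{-1}=1-(1+\epsilon\partial_{0})^{-1}$ are bounded on $H_{\rho,0}(\mathbb{R};H)$). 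Hence $(1+\epsilon\partial_{0})^{-1}u\in\mathcal{D}$, so by the two observations above it lies in $D(\partial_{0})\cap D(A)$. Adding $M_{1}(m_{0})(1+\epsilon\partial_{0})^{-1}u$ on both sides and inserting $\pm(1+\epsilon\partial_{0})^{-1}M_{1}(m_{0})u$ then turns the identity into exactly (\ref{eq:commutator_with_whole_operator}) — a purely algebraic rearrangement, legitimate once all terms are known to belong to $H_{\rho,0}(\mathbb{R};H)$.

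For the limit I would isolate $(\partial_{0}M_{0}(m_{0})+M_{1}(m_{0})+A)(1+\epsilon\partial_{0})^{-1}u$ from (\ref{eq:commutator_with_whole_operator}) and pass to the limit term by term. By Lemma \ref{lem:resolvent} and the fact that $(\partial_{0}M_{0}(m_{0})+M_{1}(m_{0})+A)u\in H_{\rho,0}(\mathbb{R};H)$, the term $(1+\epsilon\partial_{0})^{-1}(\partial_{0}M_{0}(m_{0})+M_{1}(m_{0})+A)u$ converges in $H_{\rho,0}(\mathbb{R};H)$ to $(\partial_{0}M_{0}(m_{0})+M_{1}(m_{0})+A)u$. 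Using $\epsilon\partial_{0}(1+\epsilon\partial_{0})^{-1}=1-(1+\epsilon\partial_{0})^{-1}$, the $\dot{M}_{0}$-term equals $v_{\epsilon}-(1+\epsilon\partial_{0})^{-1}v_{\epsilon}$ with $v_{\epsilon}:=\dot{M}_{0}(m_{0})(1+\epsilon\partial_{0})^{-1}u$; since $v_{\epsilon}\to\dot{M}_{0}(m_{0})u$ (boundedness of $\dot{M}_{0}(m_{0})$ and Lemma \ref{lem:resolvent}) and $\bigl((1+\epsilon\partial_{0})^{-1}\bigr)_{\epsilon>0}$ is uniformly bounded and converges strongly to the identity, both $v_{\epsilon}$ and $(1+\epsilon\partial_{0})^{-1}v_{\epsilon}$ tend to $\dot{M}_{0}(m_{0})u$, so this term tends to $0$. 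Finally $(1+\epsilon\partial_{0})^{-1}M_{1}(m_{0})u$ and $M_{1}(m_{0})(1+\epsilon\partial_{0})^{-1}u$ both tend to $M_{1}(m_{0})u$ (boundedness of $M_{1}(m_{0})$ and Lemma \ref{lem:resolvent}), so their difference vanishes. Consequently $(\partial_{0}M_{0}(m_{0})+M_{1}(m_{0})+A)(1+\epsilon\partial_{0})^{-1}u$ converges in norm, in particular $\rightharpoondown$, to $(\partial_{0}M_{0}(m_{0})+M_{1}(m_{0})+A)u$ as $\epsilon\to0+$.

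The only genuinely delicate point is the bookkeeping on the Sobolev lattice: a priori $(\partial_{0}M_{0}(m_{0})+A)u$ lies only in $H_{\rho,-1}(\mathbb{R};H_{-1}(A+1))$, so one must verify that after commuting $(1+\epsilon\partial_{0})^{-1}$ inward every intermediate expression returns to $H_{\rho,0}(\mathbb{R};H)$ — which is precisely what Corollary \ref{cor:commutator}, together with the boundedness of $\dot{M}_{0}(m_{0})$, guarantees — and that the commutation of $A$ with the function $(1+\epsilon\partial_{0})^{-1}$ of $\partial_{0}$ is read off on the appropriate space of the lattice. Everything else is routine.
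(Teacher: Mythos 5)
Your proof is correct and follows essentially the same route as the paper: Corollary \ref{cor:commutator} plus the commutation of $A$ with $(1+\epsilon\partial_{0})^{-1}$ gives the identity, from which membership in $\mathcal{D}$ and hence in $D(\partial_{0})\cap D(A)$ is read off, and the limit is taken term by term. The one genuine divergence is in the treatment of the term $\epsilon\partial_{0}(1+\epsilon\partial_{0})^{-1}\dot{M}_{0}(m_{0})(1+\epsilon\partial_{0})^{-1}u$: the paper argues by boundedness of this family in $H_{\rho,0}(\mathbb{R};H)$, extraction of a weakly convergent subsequence, and identification of the weak limit via convergence to $0$ in $H_{\rho,-1}(\mathbb{R};H)$, thereby obtaining only weak convergence (which is all the lemma asserts). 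You instead rewrite $\epsilon\partial_{0}(1+\epsilon\partial_{0})^{-1}=1-(1+\epsilon\partial_{0})^{-1}$ and combine the norm convergence $v_{\epsilon}\to\dot{M}_{0}(m_{0})u$ with the uniform boundedness and strong convergence of the resolvents, which yields convergence to $0$ \emph{in norm}. Your argument is both more elementary (no weak compactness) and strictly stronger, showing that $(\partial_{0}M_{0}(m_{0})+M_{1}(m_{0})+A)(1+\epsilon\partial_{0})^{-1}u$ in fact converges strongly; the paper's weak-convergence phrasing is what is needed downstream in Lemma \ref{lem:pos_def_D}, but nothing is lost by proving more.
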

\begin{proof}
With the help of Corollary \ref{cor:commutator} and the fact that
$A$ and $(1+\varepsilon\partial_{0})^{-1}$ commute, the formula
(\ref{eq:commutator_with_whole_operator}) follows. From (\ref{eq:commutator_with_whole_operator})
we read off that $(1+\epsilon\partial_{0})^{-1}u\in\mathcal{D}$.
Moreover, since $\left(1+\epsilon\partial_{0}\right)^{-1}u\in H_{\rho,1}\left(\mathbb{R};H\right)$
we have $\partial_{0}M_{0}\left(m_{0}\right)\left(1+\epsilon\partial_{0}\right)^{-1}u\in H_{\rho,0}\left(\mathbb{R};H\right)$.
Defining 
\[
\left(\partial_{0}M_{0}\left(m_{0}\right)+M_{1}\left(m_{0}\right)+A\right)\left(1+\epsilon\partial_{0}\right)^{-1}u\eqqcolon F\in H_{\rho,0}\left(\mathbb{R};H\right),
\]
we get that
\[
F-\left(\partial_{0}M_{0}\left(m_{0}\right)+M_{1}\left(m_{0}\right)\right)\left(1+\epsilon\partial_{0}\right)^{-1}u=A\left(1+\epsilon\partial_{0}\right)^{-1}u\in H_{\rho,0}\left(\mathbb{R};H\right).
\]
Hence, $\left(1+\epsilon\partial_{0}\right)^{-1}u\in D\left(A\right).$
According to Lemma \ref{lem:resolvent} the left-hand side of equation
(\ref{eq:commutator_with_whole_operator}) converges to $\left(\partial_{0}M_{0}\left(m_{0}\right)+M_{1}\left(m_{0}\right)+A\right)u$
and the last two terms on the right hand side cancel out as $\varepsilon\to0+.$
Moreover, since $\left(\epsilon\partial_{0}\left(1+\epsilon\partial_{0}\right)^{-1}\dot{M}_{0}\left(m_{0}\right)\left(1+\epsilon\partial_{0}\right)^{-1}u\right)_{\varepsilon>0}$
is bounded in $H_{\rho,0}(\mathbb{R};H)$, there exists a weakly convergent
subsequence. Using that $\epsilon\partial_{0}\left(1+\epsilon\partial_{0}\right)^{-1}\dot{M}_{0}\left(m_{0}\right)\left(1+\epsilon\partial_{0}\right)^{-1}u\to0$
in $H_{\rho,-1}(\mathbb{R};H)$, we deduce that 
\[
\epsilon\partial_{0}\left(1+\epsilon\partial_{0}\right)^{-1}\dot{M}_{0}\left(m_{0}\right)\left(1+\epsilon\partial_{0}\right)^{-1}u\rightharpoondown0\quad(\varepsilon\to0+),
\]
and thus 
\[
\left(\partial_{0}M_{0}\left(m_{0}\right)+M_{1}\left(m_{0}\right)+A\right)\left(1+\epsilon\partial_{0}\right)^{-1}u\rightharpoondown\left(\partial_{0}M_{0}\left(m_{0}\right)+M_{1}\left(m_{0}\right)+A\right)u\quad(\varepsilon\to0+).\tag*{\qedhere}
\]

\end{proof}
The most important step to generalize the statement of Lemma \ref{lem:pos_def_d0}
to the case of elements in $\mathcal{D}$ is the following result.
\begin{lem}
\label{lem:pos_def_D} Assume that $M_{0}$ satisfies the properties
(\ref{Lipschitz}) and (\ref{differentiable}). Let $\rho>0$, $a\in\mathbb{R}\cup\{\infty\}$
and let $G:H_{\rho,0}(\mathbb{R};H)\to\mathbb{R}$ be continuous.
Moreover, assume that for $u\in D(\partial_{0})\cap D(A)$ we have
that 
\[
\intop_{-\infty}^{a}\Re\langle\left(\partial_{0}M_{0}(m_{0})+M_{1}(m_{0})+A\right)u|u\rangle(t)e^{-2\rho t}\,\mathrm{d}t\geq G(u).
\]
Then the latter inequality holds for all $u\in\mathcal{D}$.\end{lem}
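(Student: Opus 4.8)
The plan is to exploit the approximation machinery developed in Lemmas \ref{lem:resolvent} and \ref{lem:regularizing}: given $u \in \mathcal{D}$, the regularized elements $u_\epsilon \coloneqq (1+\epsilon\partial_0)^{-1}u$ lie in $D(\partial_0)\cap D(A)$, so the hypothesised inequality applies to each $u_\epsilon$, and we want to pass to the limit $\epsilon \to 0+$. The right-hand side is easy: since $G$ is continuous on $H_{\rho,0}(\mathbb{R};H)$ and $u_\epsilon \to u$ strongly there by Lemma \ref{lem:resolvent}, we have $G(u_\epsilon) \to G(u)$. The real work is the left-hand side, which I will abbreviate as $L(v) \coloneqq \intop_{-\infty}^{a}\Re\langle(\partial_0 M_0(m_0)+M_1(m_0)+A)v|v\rangle(t)e^{-2\rho t}\,\mathrm{d}t$. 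Note $L(v)$ is exactly the pairing of $F_v \coloneqq (\partial_0 M_0(m_0)+M_1(m_0)+A)v$ with $v$ in the weighted $L^2$ space over the half-line $\oi{-\infty}{a}$ (times $e^{-2\rho t}$), i.e. $L(v) = \Re\langle F_v | v\rangle_{H_{\rho,0}(\oi{-\infty}{a};H)}$ after restricting, using that the weight is in the inner product.

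The key observation is a product structure: $L(u_\epsilon) = \Re\langle F_{u_\epsilon} | u_\epsilon \rangle$, and we control the two factors in complementary topologies. By Lemma \ref{lem:regularizing}, $F_{u_\epsilon} \rightharpoondown F_u$ weakly in $H_{\rho,0}(\mathbb{R};H)$, while $u_\epsilon \to u$ strongly in $H_{\rho,0}(\mathbb{R};H)$; the pairing of a weakly convergent sequence with a strongly convergent one converges to the pairing of the limits. The only subtlety is that $L$ involves integration over $\oi{-\infty}{a}$ rather than all of $\mathbb{R}$, but this is handled simply by replacing $v$ with $\mathbf{1}_{\oi{-\infty}{a}} v$ (multiplication by the cutoff $\chi_{\oi{-\infty}{a}}(m_0)$, a bounded operator on $H_{\rho,0}(\mathbb{R};H)$): strong convergence $u_\epsilon \to u$ gives strong convergence of the cutoffs, so $\langle F_{u_\epsilon} | \chi_{\oi{-\infty}{a}}(m_0) u_\epsilon \rangle_{H_{\rho,0}(\mathbb{R};H)} \to \langle F_u | \chi_{\oi{-\infty}{a}}(m_0) u \rangle_{H_{\rho,0}(\mathbb{R};H)}$, and the real parts converge accordingly. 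Hence $L(u_\epsilon) \to L(u)$. Combining, $L(u) = \lim_{\epsilon\to 0+} L(u_\epsilon) \geq \lim_{\epsilon\to 0+} G(u_\epsilon) = G(u)$, which is the claim. The case $a = \infty$ is identical, the cutoff operator being the identity.

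The main obstacle is making precise that $L$ can legitimately be read as the inner-product pairing to which the weak$\times$strong convergence argument applies — in particular, that for $u \in \mathcal{D}$ the expression $\langle (\partial_0 M_0(m_0)+M_1(m_0)+A)u | u \rangle$ is genuinely the duality pairing of two $H_{\rho,0}(\mathbb{R};H)$ elements and that the cutoff in time does not spoil this. Here one uses that $u \in \mathcal{D}$ means precisely $F_u \in H_{\rho,0}(\mathbb{R};H)$ by definition of $\mathcal{D}$ (and Corollary \ref{cor:domain-of-adjoint}), so both $F_u$ and $u$ sit in the same Hilbert space and the pairing is the honest inner product; the time-cutoff $\chi_{\oi{-\infty}{a}}(m_0)$ is a selfadjoint bounded idempotent on that space, so $L(v) = \Re\langle F_v | \chi_{\oi{-\infty}{a}}(m_0) v\rangle_{H_{\rho,0}(\mathbb{R};H)}$ is valid for every $v \in \mathcal{D}$. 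With that identification in hand, the passage to the limit is routine. One should also remark that no positivity or selfadjointness of $M_0$ is needed for this lemma — only (\ref{Lipschitz}) and (\ref{differentiable}), which are exactly what Lemma \ref{lem:regularizing} requires — consistent with the hypotheses as stated.
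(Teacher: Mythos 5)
Your proposal is correct and follows essentially the same route as the paper: regularize with $(1+\epsilon\partial_{0})^{-1}$, use Lemma \ref{lem:regularizing} to place $u_\epsilon$ in $D(\partial_{0})\cap D(A)$ and to get weak convergence of $(\partial_{0}M_{0}(m_{0})+M_{1}(m_{0})+A)u_\epsilon$, pair this against the strongly convergent cut-off $\X_{]-\infty,a]}(m_{0})u_\epsilon$, and conclude by continuity of $G$. Nothing to add.
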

\begin{proof}
Let $u\in\mathcal{D}.$ According to Lemma \ref{lem:regularizing}
we have that
\begin{align*}
 & \langle\X_{]-\infty,a]}(m_{0})u|(\partial_{0}M_{0}(m_{0})+M_{1}(m_{0})+A)u\rangle_{H_{\rho,0}(\mathbb{R};H)}\\
 & =\underset{\epsilon\to0}{\lim}\langle\X_{]-\infty,a]}(m_{0})(1+\epsilon\partial_{0})^{-1}u|(\partial_{0}M_{0}(m_{0})+M_{1}(m_{0})+A)(1+\epsilon\partial_{0})^{-1}u\rangle_{H_{\rho,0}(\mathbb{R};H)},
\end{align*}
where we have used that the multiplication operator $\X_{]-\infty,a]}(m_{0})$
with the cut-off function $\X_{]-\infty,a]}$ is a bounded operator
on $H_{\rho,0}(\mathbb{R};H)$ and that $(1+\epsilon\partial_{0})^{-1}$
converges strongly to $1$, by Lemma \ref{lem:resolvent}. With the
assumed inequality and the fact that for $\epsilon>0$ we have $(1+\epsilon\partial_{0})^{-1}u\in D(\partial_{0})\cap D(A)=H_{\rho,1}(\mathbb{R};H)\cap H_{\rho,0}(\mathbb{R};H_{1}(A+1)),$
by Lemma \ref{lem:regularizing}, we obtain that 
\begin{align*}
 & \Re\langle\X_{]-\infty,a]}(m_{0})u|(\partial_{0}M_{0}(m_{0})+M_{1}(m_{0})+A)u\rangle_{H_{\rho,0}(\mathbb{R};H)}\\
 & =\lim_{\varepsilon\to0}\Re\langle\X_{]-\infty,a]}(m_{0})(1+\epsilon\partial_{0})^{-1}u|(\partial_{0}M_{0}(m_{0})+M_{1}(m_{0})+A)(1+\epsilon\partial_{0})^{-1}u\rangle_{H_{\rho,0}(\mathbb{R};H)}\\
 & \geq\underset{\epsilon\to0}{\lim}\, G((1+\epsilon\partial_{0})^{-1}u)=G(u).\tag*{\qedhere}
\end{align*}
\end{proof}
\begin{cor}
\label{cor:strict_pos_def} Assume that $M_{0}$ satisfies properties
(\ref{selfadjoint})-(\ref{differentiable}). Assume that inequality
(\ref{eq:pos_def}) holds and let $\rho\geq\rho_{0}.$ Then for $u\in\mathcal{D}$
and $a\in\mathbb{R}\cup\{\infty\}$ we have that
\[
\intop_{-\infty}^{a}\Re\langle(\partial_{0}M_{0}(m_{0})+M_{1}(m_{0})+A)u|u\rangle(t)e^{-2\rho t}\,\mathrm{d}t\geq c_{0}\intop_{-\infty}^{a}|u(t)|^{2}e^{-2\rho t}\,\mathrm{d}t.
\]
\end{cor}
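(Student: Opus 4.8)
The plan is to obtain the estimate by combining Lemma~\ref{lem:pos_def_d0} with Lemma~\ref{lem:pos_def_D}, applied to the functional
\[
G\colon H_{\rho,0}(\mathbb{R};H)\to\mathbb{R},\qquad G(u)\coloneqq c_{0}\intop_{-\infty}^{a}|u(t)|^{2}e^{-2\rho t}\,\mathrm{d}t=c_{0}\,|\X_{]-\infty,a]}(m_{0})u|_{H_{\rho,0}(\mathbb{R};H)}^{2},
\]
where for $a=\infty$ the operator $\X_{]-\infty,a]}(m_{0})$ is read as the identity. First I would check that $G$ is continuous on $H_{\rho,0}(\mathbb{R};H)$: this is immediate since $\X_{]-\infty,a]}(m_{0})$ is a bounded multiplication operator on $H_{\rho,0}(\mathbb{R};H)$ and $v\mapsto|v|_{H_{\rho,0}(\mathbb{R};H)}^{2}$ is continuous, so $G$ is a composition of continuous maps.

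Next I would verify the hypothesis of Lemma~\ref{lem:pos_def_D}. For $a\in\mathbb{R}$, the inequality
\[
\intop_{-\infty}^{a}\Re\langle(\partial_{0}M_{0}(m_{0})+M_{1}(m_{0})+A)u|u\rangle(t)e^{-2\rho t}\,\mathrm{d}t\geq G(u)\qquad(u\in D(\partial_{0})\cap D(A))
\]
is precisely the conclusion (\ref{eq:pos_def_1}) of Lemma~\ref{lem:pos_def_d0}, which applies because $M_{0}$ satisfies properties (\ref{selfadjoint})--(\ref{differentiable}), inequality (\ref{eq:pos_def}) holds, and $\rho\geq\rho_{0}$. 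For $a=\infty$ the same inequality follows by letting $a\to\infty$ in the finite-$a$ estimate: for $u\in D(\partial_{0})\cap D(A)$ the integrand $\Re\langle(\partial_{0}M_{0}(m_{0})+M_{1}(m_{0})+A)u|u\rangle(t)e^{-2\rho t}$ lies in $L^{1}(\mathbb{R})$ by the Cauchy--Schwarz inequality, since both $(\partial_{0}M_{0}(m_{0})+M_{1}(m_{0})+A)u$ and $u$ belong to $H_{\rho,0}(\mathbb{R};H)$; hence the left-hand side converges to its value at $a=\infty$, while the right-hand side converges to $c_{0}\intop_{-\infty}^{\infty}|u(t)|^{2}e^{-2\rho t}\,\mathrm{d}t$ by monotone convergence.

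Since Lemma~\ref{lem:pos_def_D} only requires $M_{0}$ to satisfy properties (\ref{Lipschitz}) and (\ref{differentiable}), which are among those assumed here, and since $G$ is continuous, the lemma now applies and yields
\[
\intop_{-\infty}^{a}\Re\langle(\partial_{0}M_{0}(m_{0})+M_{1}(m_{0})+A)u|u\rangle(t)e^{-2\rho t}\,\mathrm{d}t\geq G(u)=c_{0}\intop_{-\infty}^{a}|u(t)|^{2}e^{-2\rho t}\,\mathrm{d}t
\]
for all $u\in\mathcal{D}$ and all $a\in\mathbb{R}\cup\{\infty\}$, which is exactly the assertion. I do not expect a genuine obstacle here: the substance of the argument has already been carried out in the regularisation lemmas~\ref{lem:regularizing} and~\ref{lem:pos_def_D} together with the integration-by-parts computation of Lemma~\ref{lem:integr_eq}; the only minor points left are the continuity of $G$ and the inclusion of the case $a=\infty$, both disposed of above.
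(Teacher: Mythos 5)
Your proposal is correct and follows exactly the paper's route: the paper likewise deduces the corollary immediately from Lemmas \ref{lem:pos_def_d0} and \ref{lem:pos_def_D} with the same choice of $G$. The extra details you supply (continuity of $G$ and the passage to $a=\infty$) are sound and merely make explicit what the paper leaves implicit.
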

\begin{proof}
The statement is immediate from the Lemmas \ref{lem:pos_def_d0} and
\ref{lem:pos_def_D} with 
\[
G(u)=c_{0}\intop_{-\infty}^{a}|u(t)|^{2}e^{-2\rho t}\,\mathrm{d}t=c_{0}\langle\X_{]-\infty,a]}(m_{0})u|u\rangle_{H_{\rho,0}(\mathbb{R};H)}.\tag*{\qedhere}
\]
\end{proof}
\begin{lem}
\label{lem:adjoint-pos_def} Assume that $M_{0}$ satisfies the properties
(\ref{selfadjoint})-(\ref{differentiable}) and that inequality (\ref{eq:pos_def})
holds. Let $\rho\geq\rho_{0}$ and $u\in\left\{ v\in H_{\rho,0}(\mathbb{R};H)\,|\,(M_{0}(m_{0})\partial_{0}^{*}-A)v\in H_{\rho,0}(\mathbb{R};H)\right\} $.
Then the inequality
\[
\intop_{\mathbb{R}}\Re\langle\left(M_{0}(m_{0})\partial_{0}^{*}+M_{1}(m_{0})^{*}-A\right)u|u\rangle(t)e^{-2\rho t}\,\mathrm{d}t\geq c_{0}\intop_{\mathbb{R}}|u(t)|^{2}e^{-2\rho t}\,\mathrm{d}t
\]
holds.\end{lem}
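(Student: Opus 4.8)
The statement is the adjoint analogue of Corollary \ref{cor:strict_pos_def}, so the plan is to mirror its proof step by step, but working with the operator $M_{0}(m_{0})\partial_{0}^{*}+M_{1}(m_{0})^{*}-A$ in place of $\partial_{0}M_{0}(m_{0})+M_{1}(m_{0})+A$, and integrating over all of $\mathbb{R}$ rather than over $\rla a$. First I would establish the smooth analogue: for $u\in D(\partial_{0}^{*})\cap D(A)$ one computes, using $\partial_{0}^{*}=-\partial_{0}+2\rho$, the product rule \eqref{eq:product_rule}, the selfadjointness of $\dot M_{0}(t)$ and skew-selfadjointness of $A$, that
\[
\intop_{\mathbb{R}}\Re\langle\left(M_{0}(m_{0})\partial_{0}^{*}+M_{1}(m_{0})^{*}-A\right)u|u\rangle(t)e^{-2\rho t}\,\mathrm{d}t
=\intop_{\mathbb{R}}\left\langle\left.\left(\rho M_{0}(t)-\tfrac12\dot M_{0}(t)+\Re M_{1}(t)\right)u(t)\right|u(t)\right\rangle e^{-2\rho t}\,\mathrm{d}t.
\]
Here the sign in front of $\dot M_{0}$ flips compared to Lemma \ref{lem:integr_eq} because $\partial_{0}^{*}M_{0}(m_{0})=(-\partial_{0}+2\rho)M_{0}(m_{0})=-\dot M_{0}(m_{0})-M_{0}(m_{0})\partial_{0}+2\rho M_{0}(m_{0})$, so the boundary term vanishes (the integral is over all of $\mathbb{R}$, and $e^{-2\rho\cdot}\langle u(\cdot)|M_{0}(\cdot)u(\cdot)\rangle$ is integrable with vanishing limits), leaving $-\tfrac12\dot M_{0}$ rather than $+\tfrac12\dot M_{0}$. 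The point is that $\rho M_{0}-\tfrac12\dot M_{0}+\Re M_{1}\ge c_{0}$ is \emph{also} a consequence of \eqref{eq:pos_def}: indeed \eqref{eq:pos_def} gives $\rho M_{0}(t)+\tfrac12\dot M_{0}(t)+\Re M_{1}(t)\ge c_{0}$ for all large $\rho$, and replacing $\rho$ by $\rho$ while noting $M_{0}\ge0$ one still needs $\dot M_{0}$ controlled — more precisely, since $|\dot M_{0}(t)|\le|M_{0}|_{\mathrm{Lip}}$ and $M_{0}\ge0$, enlarging $\rho$ absorbs the sign change: $\rho M_{0}-\tfrac12\dot M_{0}+\Re M_{1}=(\rho M_{0}+\tfrac12\dot M_{0}+\Re M_{1})-\dot M_{0}\ge c_{0}-\dot M_{0}$, which is not immediately $\ge c_{0}$, so instead one argues from \eqref{eq:pos_def} applied with a larger $\rho_{0}$ together with a Gronwall/monotonicity observation — this will be the step to handle carefully. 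Alternatively, and more cleanly, one uses Corollary \ref{cor:domain-of-adjoint}: on $\mathcal D$ we have $(M_{0}(m_{0})\partial_{0}^{*}-A)u=(-\partial_{0}M_{0}(m_{0})+2\rho M_{0}(m_{0})+\dot M_{0}(m_{0})-A)u$, which lets one rewrite everything in terms of the already-proven estimate.

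Concretely, the cleanest route: apply Corollary \ref{cor:strict_pos_def} to the operator with $M_{0}$ replaced by itself and $M_{1}$ replaced by $M_{1}^{*}-\dot M_{0}+2\rho M_{0}$ — but this requires re-checking \eqref{eq:pos_def} for the new coefficients, which amounts exactly to the sign computation above. I would therefore carry it out directly: starting from $u$ in the stated domain $\{v\in H_{\rho,0}\mid(M_{0}(m_{0})\partial_{0}^{*}-A)v\in H_{\rho,0}\}$, which by Corollary \ref{cor:domain-of-adjoint} equals $\mathcal D$, I regularize using $(1+\epsilon\partial_{0}^{*})^{-1}$ (the remark after Lemma \ref{lem:resolvent} guarantees strong convergence, and the analogue of Lemma \ref{lem:regularizing} — which one proves identically using the commutator identity with $\partial_{0}^{*}$ — gives $(1+\epsilon\partial_{0}^{*})^{-1}u\in D(\partial_{0}^{*})\cap D(A)$ together with weak convergence of $(M_{0}(m_{0})\partial_{0}^{*}+M_{1}(m_{0})^{*}-A)(1+\epsilon\partial_{0}^{*})^{-1}u$). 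Then the argument of Lemma \ref{lem:pos_def_D}, with $a=\infty$ and $G(u)=c_{0}\langle u|u\rangle_{H_{\rho,0}(\mathbb{R};H)}$, transfers the smooth estimate to all of $\mathcal D$.

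The main obstacle is the sign flip on $\dot M_{0}$: one must verify that the positive-definiteness hypothesis \eqref{eq:pos_def}, which as stated controls $\rho M_{0}+\tfrac12\dot M_{0}+\Re M_{1}$, also controls $\rho M_{0}-\tfrac12\dot M_{0}+\Re M_{1}$ from below by $c_{0}$ for $\rho$ large. The resolution is that \eqref{eq:pos_def} holds \emph{for all} $\rho\ge\rho_{0}$, so writing $\rho M_{0}-\tfrac12\dot M_{0}+\Re M_{1}=(\rho_{0}M_{0}+\tfrac12\dot M_{0}+\Re M_{1})+(\rho-\rho_{0})M_{0}-\dot M_{0}\ge c_{0}+(\rho-\rho_{0})M_{0}-\dot M_{0}$ and — since $M_{0}\ge 0$ need not help pointwise where $M_{0}$ degenerates — one instead invokes that \eqref{eq:pos_def} at the \emph{same} $\rho$ gives $\rho M_{0}+\tfrac12\dot M_{0}+\Re M_{1}\ge c_0$, hence $2\Re M_1 + 2\rho M_0 \ge 2c_0 - \dot M_0 - (\text{the same with}+)$... the honest statement is that the hypothesis should be read symmetrically in the sign of $\dot M_0$ (as it typically is stated), or one simply enlarges $\rho_{0}$; I would note this explicitly and then the integral identity plus the regularization finish the proof exactly as in Corollary \ref{cor:strict_pos_def}. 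Everything else — the integration by parts, the skew-selfadjointness killing $\Re\langle Au|u\rangle$, the density argument — is routine and parallels the lemmas already proved.
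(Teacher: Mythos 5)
Your proposal contains a genuine error in the central computation, and the ``main obstacle'' you identify --- the alleged sign flip on $\dot M_{0}$ --- is spurious. You compute $\partial_{0}^{*}M_{0}(m_{0})=-\dot M_{0}(m_{0})-M_{0}(m_{0})\partial_{0}+2\rho M_{0}(m_{0})$, but the operator in the lemma is $M_{0}(m_{0})\partial_{0}^{*}$, with $M_{0}(m_{0})$ to the \emph{left} of $\partial_{0}^{*}$ (this is the correct ordering for the adjoint of $\partial_{0}M_{0}(m_{0})$, since $M_{0}(t)$ is selfadjoint). Using $\partial_{0}^{*}=-\partial_{0}+2\rho$ and the product rule (\ref{eq:product_rule}) in the form $M_{0}(m_{0})\partial_{0}=\partial_{0}M_{0}(m_{0})-\dot M_{0}(m_{0})$, one gets
\[
M_{0}(m_{0})\partial_{0}^{*}=-\partial_{0}M_{0}(m_{0})+\dot M_{0}(m_{0})+2\rho M_{0}(m_{0}),
\]
exactly as recorded in Corollary \ref{cor:domain-of-adjoint}. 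Feeding this into Lemma \ref{lem:integr_eq}, the term $-\intop\Re\langle\partial_{0}M_{0}(m_{0})u|u\rangle$ contributes $-\tfrac12\intop\langle\dot M_{0}u|u\rangle$ while the extra commutator term contributes $+\intop\langle\dot M_{0}u|u\rangle$, so the net coefficient of $\dot M_{0}$ is $+\tfrac12$, \emph{identical} to the forward case, and hypothesis (\ref{eq:pos_def}) applies verbatim. There is nothing to ``read symmetrically'' and no need to enlarge $\rho_{0}$; as you yourself observe, enlarging $\rho_{0}$ could not work anyway where $M_{0}$ degenerates, so your fallback does not close the gap you created.

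The feature that actually changes in the adjoint case is the one you dismiss too quickly: the boundary term becomes $-\tfrac12\langle u(a)|M_{0}(a)u(a)\rangle e^{-2\rho a}$, i.e.\ it enters with the \emph{wrong} sign and can no longer be discarded by non-negativity of $M_{0}(a)$ for finite $a$. This is precisely why the lemma is stated only for the integral over all of $\mathbb{R}$: one must let $a\to\infty$ so that the boundary term vanishes. The rest of your strategy is sound and close to the paper's: use Corollary \ref{cor:domain-of-adjoint} to identify the domain with $\mathcal{D}$ and rewrite $M_{0}(m_{0})\partial_{0}^{*}+M_{1}(m_{0})^{*}-A$ as $-\partial_{0}M_{0}(m_{0})+2\rho M_{0}(m_{0})+\dot M_{0}(m_{0})+M_{1}(m_{0})^{*}-A$, prove the estimate on $D(\partial_{0})\cap D(A)$ via Lemma \ref{lem:integr_eq}, and pass to $\mathcal{D}$ by Lemma \ref{lem:pos_def_D} (which only needs hypotheses (\ref{Lipschitz}) and (\ref{differentiable}) and so applies to the rewritten operator directly, making your separate $(1+\epsilon\partial_{0}^{*})^{-1}$-regularization unnecessary). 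With the sign corrected and the limit $a\to\infty$ made explicit, the argument goes through as in the paper.
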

\begin{proof}
By Corollary \ref{cor:domain-of-adjoint}, we deduce that $u\in\mathcal{D}$
and that for $a\in\mathbb{R}$ we have
\begin{align}
 & \intop_{-\infty}^{a}\Re\langle\left(M_{0}(m_{0})\partial_{0}^{*}+M_{1}(m_{0})^{*}-A\right)u|u\rangle(t)e^{-2\rho t}\,\mathrm{d}t\nonumber \\
 & =\intop_{-\infty}^{a}\Re\langle(-\partial_{0}M_{0}(m_{0})+2\rho M_{0}(m_{0})+\dot{M}_{0}(m_{0})+M_{1}(m_{0})^{*}-A)u|u\rangle(t)e^{-2\rho t}\,\mathrm{d}t.\label{eq:adjoint_reformu}
\end{align}

With Lemma \ref{lem:integr_eq} we get for $u\in D(\partial_{0})\cap D(A)$
and $a\in\mathbb{R}$ that 
\begin{align*}
 & \intop_{-\infty}^{a}\Re\langle(-\partial_{0}M_{0}(m_{0})+2\rho M_{0}(m_{0})+\dot{M}_{0}(m_{0})+M_{1}(m_{0})^{*}-A)u|u\rangle(t)e^{-2\rho t}\,\mathrm{d}t\\
 & =-\frac{1}{2}\langle u(a)|M_{0}(a)u(a)\rangle e^{-2\rho a}-\intop_{-\infty}^{a}\rho\langle u(t)|M_{0}(t)u(t)\rangle e^{-2\rho t}\mbox{ d}t\\
 & \quad-\intop_{-\infty}^{a}\left\langle \left.\frac{1}{2}\dot{M}_{0}(t)u(t)-\Re M_{1}(t)u(t)\right|u(t)\right\rangle e^{-2\rho t}\mbox{ d}t\\
 & \quad+\intop_{-\infty}^{a}\langle2\rho M_{0}(t)u(t)+\dot{M}_{0}(t)u(t)|u(t)\rangle e^{-2\rho t}\mbox{ d}t\\
 & =-\frac{1}{2}\langle u(a)|M_{0}(a)u(a)\rangle e^{-2\rho a}+\intop_{-\infty}^{a}\rho\langle u(t)|M_{0}(t)u(t)\rangle e^{-2\rho t}\mbox{ d}t\\
 & \quad+\intop_{-\infty}^{a}\left\langle \left.\frac{1}{2}\dot{M}_{0}(t)u(t)+\Re M_{1}(t)u(t)\right|u(t)\right\rangle e^{-2\rho t}\mbox{ d}t\\
 & \geq-\frac{1}{2}\langle u(a)|M_{0}(a)u(a)\rangle e^{-2\rho a}+c_{0}\intop_{-\infty}^{a}\langle u(t)|u(t)\rangle e^{-2\rho t}\mbox{ d}t
\end{align*}
Letting $a\to\infty,$ we deduce that 
\begin{align*}
 & \intop_{\mathbb{R}}\Re\langle(-\partial_{0}M_{0}(m_{0})+2\rho M_{0}(m_{0})+\dot{M}_{0}(m_{0})+M_{1}(m_{0})^{*}-A)u|u\rangle(t)e^{-2\rho t}\,\mathrm{d}t\\
 & \geq c_{0}\intop_{\mathbb{R}}\langle u(t)|u(t)\rangle e^{-2\rho t}\mbox{ d}t.
\end{align*}
Now, Lemma \ref{lem:pos_def_D} implies the latter inequality to hold
for all $u\in\mathcal{D}.$ The assertion follows from equation (\ref{eq:adjoint_reformu}).\end{proof}
\begin{thm}[Solution Theory]
\label{thm:Solutiontheory} Let $A\colon D(A)\subseteq H\to H$ be
skew-selfadjoint and $M_{0},M_{1}\in L_{s}^{\infty}(\mathbb{R};L(H)).$
Furthermore, assume that $M_{0}$ satisfies the hypotheses (\ref{selfadjoint})-(\ref{differentiable})
and that (\ref{eq:pos_def}) holds. Then the operator $\partial_{0}M_{0}\left(m_{0}\right)+M_{1}\left(m_{0}\right)+A$
is continuously invertible in $H_{\rho,0}(\mathbb{R};H)$ for each
$\rho\geq\rho_{0}$. A norm bound for the inverse is $1/c_{0}$. Moreover,
we get that 
\[
\left(\partial_{0}M_{0}\left(m_{0}\right)+M_{1}\left(m_{0}\right)+A\right)^{*}=\left(M_{0}\left(m_{0}\right)\partial_{0}^{*}+M_{1}\left(m_{0}\right)^{*}-A\right),
\]
where the latter operator is considered in $H_{\rho,0}(\mathbb{R};H)$
with maximal domain.\end{thm}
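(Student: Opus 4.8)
The plan is to introduce the abbreviations $L\coloneqq\partial_{0}M_{0}(m_{0})+M_{1}(m_{0})+A$ (with domain $\mathcal{D}$) and $S\coloneqq M_{0}(m_{0})\partial_{0}^{*}+M_{1}(m_{0})^{*}-A$ (understood with its maximal domain), to show that $L$ and $S$ are closed, coercive operators with $L^{*}=S$, and to read off the invertibility and the norm bound from these facts. I first record two consequences of what is already available. By Corollary~\ref{cor:domain-of-adjoint} one has $D(S)=\mathcal{D}$, and on $\mathcal{D}$ the operator $S$ differs from $-L$ only by the bounded operator $T\coloneqq 2\rho M_{0}(m_{0})+\dot{M}_{0}(m_{0})+M_{1}(m_{0})+M_{1}(m_{0})^{*}\in L(H_{\rho,0}(\mathbb{R};H))$; this identity $S=-L+T$ on $\mathcal{D}$ will be used below. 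Moreover, $u\mapsto\partial_{0}M_{0}(m_{0})u+Au$ is continuous from $H_{\rho,0}(\mathbb{R};H)$ into $H_{\rho,-1}(\mathbb{R};H_{-1}(A+1))$ (here one uses the boundedness of $M_{0}(m_{0})$ on $H_{\rho,-1}(\mathbb{R};H)$ from Lemma~\ref{lem:product rule}), so by the usual maximal-domain argument $L$ and likewise $S$ are closed; also $\mathcal{D}$ is dense (it contains all $\varphi(m_{0})x$ with $\varphi\in\interior C_{\infty}(\mathbb{R})$ and $x\in D(A)$), so $L^{*}$ is a well-defined closed operator.

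Next I would record the coercivity estimates. Corollary~\ref{cor:strict_pos_def}, applied with $a=\infty$, gives $\Re\langle Lu|u\rangle_{H_{\rho,0}(\mathbb{R};H)}\geq c_{0}\,|u|_{H_{\rho,0}(\mathbb{R};H)}^{2}$ for all $u\in\mathcal{D}$, and Lemma~\ref{lem:adjoint-pos_def} gives the same inequality for $S$ on $D(S)=\mathcal{D}$. By Cauchy--Schwarz these force $|Lu|\geq c_{0}|u|$ and $|Sv|\geq c_{0}|v|$; in particular both operators are injective, and, since they are closed, their ranges are closed and their inverses on the ranges are bounded by $1/c_{0}$.

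The main work is the identity $L^{*}=S$. For the inclusion $S\subseteq L^{*}$ I would start from the core $\mathcal{C}\coloneqq D(\partial_{0})\cap D(A)=H_{\rho,1}(\mathbb{R};H)\cap H_{\rho,0}(\mathbb{R};H_{1}(A+1))\subseteq\mathcal{D}$: for $u,v\in\mathcal{C}$ one has $\langle Lu|v\rangle=\langle u|Sv\rangle$ by integration by parts, using that $M_{0}(m_{0})$ maps $D(\partial_{0})$ into itself (Lemma~\ref{lem:product rule}), the selfadjointness of $M_{0}(m_{0})$, the relation $\partial_{0}^{*}=-\partial_{0}+2\rho$, and the skew-selfadjointness of $A$. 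To pass from $\mathcal{C}$ to $\mathcal{D}$ I would regularize: given $u\in\mathcal{D}$ and $v\in\mathcal{C}$, insert $u_{\epsilon}\coloneqq(1+\epsilon\partial_{0})^{-1}u\in\mathcal{C}$ and let $\epsilon\to0+$, using $u_{\epsilon}\to u$ and $Lu_{\epsilon}\rightharpoondown Lu$ (Lemma~\ref{lem:regularizing}); then, given $u\in\mathcal{D}$ and $v\in\mathcal{D}$, insert $v_{\epsilon}\coloneqq(1+\epsilon\partial_{0})^{-1}v\in\mathcal{C}$ and use that $Sv_{\epsilon}=-Lv_{\epsilon}+Tv_{\epsilon}\rightharpoondown Sv$ by the relation $S=-L+T$ together with Lemmas~\ref{lem:regularizing} and~\ref{lem:resolvent}. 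This yields $\langle Lu|v\rangle=\langle u|Sv\rangle$ for all $u,v\in\mathcal{D}$, i.e.\ $S\subseteq L^{*}$. For the converse I would take $v\in D(L^{*})$ with $g\coloneqq L^{*}v\in H_{\rho,0}(\mathbb{R};H)$ and test the defining relation $\langle Lu|v\rangle=\langle u|g\rangle$ against $u=\varphi(m_{0})x$, $\varphi\in\interior C_{\infty}(\mathbb{R})$, $x\in D(A)$ (such $u$ lie in $\mathcal{D}$ and form a total set for the duality pairing of $H_{\rho,1}(\mathbb{R};H_{1}(A+1))$ with $H_{\rho,-1}(\mathbb{R};H_{-1}(A+1))$); moving $\partial_{0}$, $M_{0}(m_{0})$ and $A$ across exactly as before, but now reading the result in $H_{\rho,-1}(\mathbb{R};H_{-1}(A+1))$, one obtains $M_{0}(m_{0})\partial_{0}^{*}v+M_{1}(m_{0})^{*}v-Av=g$ in that space. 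Since $g-M_{1}(m_{0})^{*}v\in H_{\rho,0}(\mathbb{R};H)$, this shows $M_{0}(m_{0})\partial_{0}^{*}v-Av\in H_{\rho,0}(\mathbb{R};H)$, so $v$ belongs to the maximal domain of $S$ and $Sv=g=L^{*}v$. Hence $L^{*}=S$.

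Finally I would assemble everything. If $w\in H_{\rho,0}(\mathbb{R};H)$ is orthogonal to $\operatorname{ran}(L)$, then $w\in D(L^{*})$ with $L^{*}w=0$, hence, by $L^{*}=S$ and the coercivity of $S$, $c_{0}|w|^{2}\leq\Re\langle Sw|w\rangle=0$, so $w=0$; therefore $\operatorname{ran}(L)$ is dense. Being also closed, $\operatorname{ran}(L)=H_{\rho,0}(\mathbb{R};H)$, so $L\colon\mathcal{D}\to H_{\rho,0}(\mathbb{R};H)$ is a continuous bijection with $\|L^{-1}\|\leq 1/c_{0}$, and $L^{*}=S$ is exactly the claimed representation of the adjoint. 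I expect the delicate step to be the inclusion $L^{*}\subseteq S$: it forces one to interpret the adjoint relation as an identity in the extrapolation space $H_{\rho,-1}(\mathbb{R};H_{-1}(A+1))$ and then to argue that, a posteriori, the combination $M_{0}(m_{0})\partial_{0}^{*}v-Av$ lands back in $H_{\rho,0}(\mathbb{R};H)$ — so the real care is in the Sobolev-lattice bookkeeping (which pairings live in which space), rather than in any single inequality.
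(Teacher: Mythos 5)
Your proposal is correct and follows the paper's overall strategy --- coercivity of $\partial_{0}M_{0}(m_{0})+M_{1}(m_{0})+A$ from Corollary \ref{cor:strict_pos_def}, coercivity of the formal adjoint from Lemma \ref{lem:adjoint-pos_def}, identification of the adjoint, and surjectivity via the orthogonal decomposition of $H_{\rho,0}(\mathbb{R};H)$ into kernel of the adjoint and closure of the range --- but the identification $L^{*}=S$, which is the technical core, is carried out by a different mechanism. The paper takes $f\in D(\mathcal{B}^{*})$, regularizes with $(1+\epsilon\partial_{0}^{*})^{-1}$, shows these regularizations lie in $D(\partial_{0}^{*})\cap D(A)$ and that $\mathcal{B}^{*}$ acts on them as $M_{0}(m_{0})\partial_{0}^{*}+M_{1}(m_{0})^{*}-A$, derives a uniform bound from (\ref{eq:adjoint_B_reg}), and passes to the limit using weak compactness and the (weak) closedness of $\mathcal{B}^{*}$; the reverse inclusion $\mathcal{C}\subseteq\mathcal{B}^{*}$ is then harvested only at the very end from $\left(\mathcal{B}^{-1}\right)^{*}=\left(\mathcal{B}^{*}\right)^{-1}\subseteq\mathcal{C}^{-1}$ and left-totality. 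You instead prove $S\subseteq L^{*}$ first (core plus regularization, using only Lemmas \ref{lem:product rule}, \ref{lem:resolvent} and \ref{lem:regularizing} together with the identity $S=-L+T$ from Corollary \ref{cor:domain-of-adjoint}), and then obtain $L^{*}\subseteq S$ by testing against elementary tensors $\varphi(m_{0})x$ and reading the resulting identity in $H_{\rho,-1}(\mathbb{R};H_{-1}(A+1))$, concluding a posteriori that $M_{0}(m_{0})\partial_{0}^{*}v-Av$ lands back in $H_{\rho,0}(\mathbb{R};H)$. Both routes lean on the Sobolev lattice (the paper's weak limit is likewise taken in $H_{\rho,-1}(\mathbb{R};H)\cap H_{\rho,0}(\mathbb{R};H_{-1}(A+1))$); yours trades the weak-compactness/closedness argument for the nondegeneracy of the duality pairing, the extended product rule of the Remark following Lemma \ref{lem:product rule}, and the boundedness of $M_{0}(m_{0})$ on $H_{\rho,-1}(\mathbb{R};H)$, all of which are available. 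A small bonus of your ordering is that $S\subseteq L^{*}$ immediately gives $\mathcal{D}=D(S)\subseteq D(L^{*})$, an inclusion the paper only recovers in its last line; the price is that the extrapolation-space bookkeeping in your $L^{*}\subseteq S$ step must be done carefully, exactly as you flag.
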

\begin{proof}
Let $\rho\geq\rho_{0}.$ By Corollary \ref{cor:strict_pos_def} we
have that 
\begin{align*}
\Re\langle u|(\partial_{0}M_{0}(m_{0})+M_{1}(m_{0})+A)u\rangle_{H_{\rho,0}(\mathbb{R};H)} & \geq c_{0}\langle u|u\rangle_{H_{\rho,0}(\mathbb{R};H)}
\end{align*}
for all $u\in\mathcal{D}$. This implies that the operator $\partial_{0}M_{0}\left(m_{0}\right)+M_{1}\left(m_{0}\right)+A$
has a continuous inverse with operator norm less than or equal to
the constant $1/c_{0}$. \\
It remains to show that $\partial_{0}M_{0}(m_{0})+M_{1}(m_{0})+A$
maps onto $H_{\rho,0}(\mathbb{R};H)$. For this we compute the adjoint
of 
\[
\mathcal{B}\coloneqq\left(\partial_{0}M_{0}\left(m_{0}\right)+M_{1}\left(m_{0}\right)+A\right)
\]
considered as an operator in $H_{\rho,0}(\mathbb{R};H)$. Let $f\in D(\mathcal{B}^{*})\subseteq H_{\rho,0}(\mathbb{R};H)$.
Then for all $u\in\mathcal{D}$ and $\epsilon>0$ we obtain with the
help of equation (\ref{eq:commutator_with_whole_operator}) that 
\begin{align}
 & \left\langle \mathcal{B}u\left|\left((1+\epsilon\partial_{0})^{-1}\right)^{*}f\right.\right\rangle _{H_{\rho,0}(\mathbb{R};H)}\nonumber \\
 & =\left\langle \left.(1+\epsilon\partial_{0})^{-1}\mathcal{B}u\right|f\right\rangle _{H_{\rho,0}(\mathbb{R};H)}\nonumber \\
 & =\left\langle \left.\mathcal{B}(1+\epsilon\partial_{0})^{-1}u\right|f\right\rangle _{H_{\rho,0}(\mathbb{R};H)}+\langle-\epsilon\partial_{0}\left(1+\epsilon\partial_{0}\right)^{-1}\dot{M}_{0}\left(m_{0}\right)\left(1+\epsilon\partial_{0}\right)^{-1}u|f\rangle_{H_{\rho,0}(\mathbb{R};H)}\nonumber \\
 & \quad+\left\langle \left.\left(1+\epsilon\partial_{0}\right)^{-1}M_{\text{1}}\left(m_{0}\right)u-M_{1}(m_{0})\left(1+\epsilon\partial_{0}\right)^{-1}u\right|f\right\rangle _{H_{\rho,0}(\mathbb{R};H)}\nonumber \\
 & =\left\langle \left.\left(1+\epsilon\partial_{0}\right)^{-1}u\right|\mathcal{B}^{*}f\right\rangle _{H_{\rho,0}(\mathbb{R};H)}+\left\langle u\left|\big(-\epsilon\partial_{0}\left(1+\epsilon\partial_{0}\right)^{-1}\dot{M}_{0}\left(m_{0}\right)\left(1+\epsilon\partial_{0}\right)^{-1}\big)^{*}f\right.\right\rangle _{H_{\rho,0}(\mathbb{R};H)}\nonumber \\
 & \quad+\left\langle u\left|\left(\left(1+\epsilon\partial_{0}\right)^{-1}M_{\text{1}}\left(m_{0}\right)-M_{1}(m_{0})\left(1+\epsilon\partial_{0}\right)^{-1}\right)^{*}f\right.\right\rangle _{H_{\rho,0}(\mathbb{R};H)}.\label{eq:adjoint_B_reg}
\end{align}
Hence, we deduce that $(1+\epsilon\partial_{0}^{*})^{-1}[D(\mathcal{B}^{*})]\subseteq D(\mathcal{B}^{*})$
for $\epsilon>0$. %
Moreover, we have $(1+\epsilon\partial_{0}^{*})^{-1}f\in D(\partial_{0}^{*})$.
Thus, for $u\in H_{\rho,1}(\mathbb{R};H_{1}(A+1))\subseteq D(\mathcal{B})$
and $\epsilon>0$ we get that 
\begin{align*}
 & \left\langle u\left|\mathcal{B}^{*}(1+\epsilon\partial_{0}^{*})^{-1}f\right.\right\rangle _{H_{\rho,0}(\mathbb{R};H)}\\
 & =\left\langle \mathcal{B}u\left|(1+\epsilon\partial_{0}^{*})^{-1}f\right.\right\rangle _{H_{\rho,0}(\mathbb{R};H)}\\
 & =\left\langle \left.\left(\partial_{0}M_{0}(m_{0})+M_{1}(m_{0})+A\right)u\right|(1+\epsilon\partial_{0}^{*})^{-1}f\right\rangle _{H_{\rho,0}(\mathbb{R};H)}\\
 & =\left\langle u\left|\left(M_{0}(m_{0})\partial_{0}^{*}+M_{1}(m_{0})^{*}\right)(1+\epsilon\partial_{0}^{*})^{-1}f\right.\right\rangle _{H_{\rho,0}(\mathbb{R};H)}+\left\langle Au\left|(1+\epsilon\partial_{0}^{*})^{-1}f\right.\right\rangle _{H_{\rho,0}(\mathbb{R};H)}.
\end{align*}
Since $H_{\rho,1}(\mathbb{R};H_{1}(A+1))$ is a core for $A$, we
deduce that $(1+\epsilon\partial_{0}^{*})^{-1}f\in D(A)$ for $\epsilon>0$.
Moreover, we have
\[
\mathcal{B}^{*}\left(1+\epsilon\partial_{0}^{*}\right)^{-1}f=\left(M_{0}(m_{0})\partial_{0}^{*}+M_{1}(m_{0})^{*}-A\right)\left(1+\epsilon\partial_{0}^{*}\right)^{-1}f.
\]
Using (\ref{eq:adjoint_B_reg}) we can estimate
\begin{align*}
\left|\mathcal{B}^{\ast}(1+\epsilon\partial_{0}^{\ast})^{-1}f\right|_{H_{\rho,0}(\mathbb{R};H)} & =\sup\left\{ \left.\left|\left\langle u\left|\mathcal{B}^{\ast}(1+\epsilon\partial_{0}^{\ast})^{-1}f\right.\right\rangle _{H_{\rho,0}(\mathbb{R};H)}\right|\,\right|\, u\in\mathcal{D},|u|_{H_{\rho,0}(\mathbb{R};H)}\leq1\right\} \\
 & \leq|\mathcal{B}^{\ast}f|_{H_{\rho,0}(\mathbb{R};H)}+2|M_{0}|_{\mathrm{Lip}}|f|_{H_{\rho,0}(\mathbb{R};H)}\\
 & \quad+2\|M_{1}(m_{0})\|_{L(H_{\rho,0}(\mathbb{R};H))}|f|_{H_{\rho,0}(\mathbb{R};H)}
\end{align*}
for every $\varepsilon>0$ and thus, we find a weakly convergent subsequence
in $H_{\rho,0}(\mathbb{R};H)\subseteq H_{\rho,-1}(\mathbb{R};H)\cap H_{\rho,0}(\mathbb{R};H_{-1}(A+1))$.
Moreover, note that $\left(M_{0}(m_{0})\partial_{0}^{*}+M_{1}(m_{0})^{*}-A\right)\left(1+\epsilon\partial_{0}^{*}\right)^{-1}f$
converges to $\left(M_{0}(m_{0})\partial_{0}^{*}+M_{1}(m_{0})^{*}-A\right)f$
in $H_{\rho,-1}(\mathbb{R};H)\cap H_{\rho,0}(\mathbb{R};H_{-1}(A+1)).$
Thus, by the (weak) closedness of $\mathcal{B}^{\ast}$ we derive
\[
\mathcal{B}^{*}f=\left(M_{0}(m_{0})\partial_{0}^{*}+M_{1}(m_{0})^{*}-A\right)f
\]
 and 
\[
D(\mathcal{B}^{*})\subseteq\left\{ f\in H_{\rho,0}(\mathbb{R};H)\,\left|\,\left(M_{0}(m_{0})\partial_{0}^{*}+M_{1}(m_{0})^{*}-A\right)f\in H_{\rho,0}(\mathbb{R};H)\right.\right\} .
\]
We define 
\begin{align*}
\mathcal{C}\colon D(\mathcal{C})\subseteq H_{\rho,0}(\mathbb{R};H) & \to H_{\rho,0}(\mathbb{R};H)\\
f & \mapsto\left(M_{0}(m_{0})\partial_{0}^{*}+M_{1}(m_{0})^{*}-A\right)f,
\end{align*}
where $D(\mathcal{C})\coloneqq\left\{ f\in H_{\rho,0}(\mathbb{R};H)\,|\,\left(M_{0}(m_{0})\partial_{0}^{*}-A\right)f\in H_{\rho,0}(\mathbb{R};H)\right\} .$
Lemma \ref{lem:adjoint-pos_def} ensures that $\mathcal{C}$ is one-to-one.
Thus, so is $\mathcal{B}^{*}$. According to the projection theorem
we have the orthogonal decomposition
\begin{align*}
H_{\rho,0}(\mathbb{R};H)= & N((\partial_{0}M_{0}(m_{0})+M_{1}(m_{0})+A)^{*})\oplus R(\partial_{0}M_{0}(m_{0})+M_{1}(m_{0})+A)\\
= & \{0\}\oplus R(\partial_{0}M_{0}(m_{0})+M_{1}(m_{0})+A)
\end{align*}
and this establishes the onto-property of $\partial_{0}M_{0}(m_{0})+M_{1}(m_{0})+A$.
Moreover, we get that $\left(\mathcal{B}^{-1}\right)^{*}=\left(\mathcal{B}^{*}\right)^{-1}\subseteq\mathcal{C}^{-1}$.
The first operator is left-total. Thus, $\mathcal{B}^{*}=\mathcal{C}$. 
\end{proof}

\subsection*{Causality}

At first we give the definition of causality in our framework.
\begin{defn}
\label{def: causality}Let $H$ be a Hilbert space, $\rho>0$ and
$G\colon D(G)\subseteq H_{\rho,0}(\mathbb{R};H)\to H_{\rho,0}(\mathbb{R};H)$.
Then $G$ is called\emph{ (forward) causal, }if for each $a\in\mathbb{R}$
and each $f,g\in D(G)$ the implication 
\[
\X_{]-\infty,a]}(m_{0})(f-g)=0\Longrightarrow\X_{]-\infty,a]}(m_{0})\left(G(f)-G(g)\right)=0
\]
holds. 
\end{defn}
Now, we want to show that our solution operator $(\partial_{0}M_{0}(m_{0})+M_{1}(m_{0})+A)^{-1}$
is causal in $H_{\rho,0}(\mathbb{R};H)$.

\begin{thm}[causal solution operator]
 Under the assumptions of Theorem \ref{thm:Solutiontheory} the solution
operator $(\partial_{0}M_{0}(m_{0})+M_{1}(m_{0})+A)^{-1}$ is causal
in $H_{\rho,0}(\mathbb{R};H)$ for each $\rho\geq\rho_{0}$.\end{thm}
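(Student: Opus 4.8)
The plan is to exploit the positive definiteness estimate from Corollary \ref{cor:strict_pos_def}, which was deliberately proved not just on all of $\mathbb{R}$ but with a variable upper cut-off $a\in\mathbb{R}\cup\{\infty\}$. That finite-time estimate is exactly the ingredient that upgrades mere invertibility to causality. First I would recall the standard reduction: since $(\partial_0 M_0(m_0)+M_1(m_0)+A)^{-1}$ is a bounded \emph{linear} operator, causality in the sense of Definition \ref{def: causality} reduces to showing that for each $a\in\mathbb{R}$, whenever $F\in H_{\rho,0}(\mathbb{R};H)$ satisfies $\X_{]-\infty,a]}(m_0)F=0$, the solution $u\coloneqq(\partial_0 M_0(m_0)+M_1(m_0)+A)^{-1}F$ also satisfies $\X_{]-\infty,a]}(m_0)u=0$.

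So fix such an $F$ and the corresponding $u\in\mathcal{D}$, so that $\mathcal{B}u=F$ with $\mathcal{B}=\partial_0 M_0(m_0)+M_1(m_0)+A$. The key step is to apply Corollary \ref{cor:strict_pos_def} with the upper limit $a$: this gives
\[
\intop_{-\infty}^{a}\Re\langle \mathcal{B}u|u\rangle(t)e^{-2\rho t}\,\mathrm{d}t\geq c_0\intop_{-\infty}^{a}|u(t)|^2 e^{-2\rho t}\,\mathrm{d}t.
\]
On the left-hand side, $\mathcal{B}u=F$, and since $F$ vanishes on $]-\infty,a]$, the integrand $\Re\langle F|u\rangle(t)e^{-2\rho t}$ is zero for a.e.\ $t\le a$, so the left-hand side is $0$. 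Hence $c_0\intop_{-\infty}^{a}|u(t)|^2 e^{-2\rho t}\,\mathrm{d}t\le 0$, and since $c_0>0$ this forces $u(t)=0$ for a.e.\ $t\le a$, i.e.\ $\X_{]-\infty,a]}(m_0)u=0$. Since $a\in\mathbb{R}$ was arbitrary, causality follows.

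The only point requiring a little care — and the place I would expect to be the main obstacle — is the justification that the left-hand integral genuinely vanishes, i.e.\ that one may split $\langle\mathcal{B}u|u\rangle(t)$ pointwise and use $\X_{]-\infty,a]}(m_0)\mathcal{B}u = \X_{]-\infty,a]}(m_0)F = 0$. This is clean because $\mathcal{B}u=F$ holds as an identity in $H_{\rho,0}(\mathbb{R};H)$, so the pointwise inner product $\langle(\mathcal Bu)(t)|u(t)\rangle$ equals $\langle F(t)|u(t)\rangle$ for a.e.\ $t$, and the integral $\intop_{-\infty}^a \Re\langle F|u\rangle(t)e^{-2\rho t}\,\mathrm{d}t = \Re\langle \X_{]-\infty,a]}(m_0)F\,|\,u\rangle_{H_{\rho,0}(\mathbb{R};H)} = 0$. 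No regularization or density argument beyond what is already packaged in Corollary \ref{cor:strict_pos_def} is needed, so the proof is short. One should also note explicitly that linearity is what reduces the two-argument condition $\X_{]-\infty,a]}(m_0)(f-g)=0\Rightarrow\X_{]-\infty,a]}(m_0)(G(f)-G(g))=0$ to the single-function statement above, applied to $F=f-g$ and its solution $u=G(f)-G(g)$.
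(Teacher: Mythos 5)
Your proposal is correct and follows exactly the paper's own argument: apply Corollary \ref{cor:strict_pos_def} with the finite upper limit $a$ to the solution $u=\mathcal{B}^{-1}F$, observe that the left-hand side equals $\intop_{-\infty}^{a}\Re\langle F|u\rangle(t)e^{-2\rho t}\,\mathrm{d}t=0$ because $F$ vanishes on $]-\infty,a]$, and conclude $u=0$ a.e.\ on $]-\infty,a]$, with linearity handling the two-argument form of Definition \ref{def: causality}. No discrepancies.
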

\begin{proof}
Let $f\in H_{\rho,0}(\mathbb{R};H)$ with $\X_{]-\infty,a]}(m_{0})f=0$.
We define $u\coloneqq(\partial_{0}M_{0}(m_{0})+M_{1}(m_{0})+A)^{-1}f\in\mathcal{D}$
and estimate, using Corollary \ref{cor:strict_pos_def},
\[
0=\intop_{-\infty}^{a}\Re\langle f|u\rangle(t)e^{-2\rho t}\mbox{ d}t\geq c_{0}\intop_{-\infty}^{a}|u(t)|^{2}e^{-2\rho t}\mbox{ d}t,
\]
which shows that $\X_{]-\infty,a]}(m_{0})u=0$. Thus, due to linearity,
the solution operator is causal.
\end{proof}

\subsection*{An illustrative example}

To exemplify what has been achieved so far, let us consider a somewhat
contrived and simplistic example.

The starting point of our presentation is the $\left(1+1\right)$-dimensional
wave equation
\begin{eqnarray*}
\partial_{0}^{2}u-\partial_{1}^{2}u & = & f\mbox{ on }\mathbb{R}\times\mathbb{R}.
\end{eqnarray*}
As usual we rewrite this equation as a first order system of the form
\begin{equation}
\left(\partial_{0}\left(\begin{array}{cc}
1 & 0\\
0 & 1
\end{array}\right)+\left(\begin{array}{cc}
0 & -\partial_{1}\\
-\partial_{1} & 0
\end{array}\right)\right)\left(\begin{array}{c}
u\\
v
\end{array}\right)=\left(\begin{array}{c}
\partial_{0}^{-1}f\\
0
\end{array}\right).\label{eq:wave-eq}
\end{equation}
In this case we can compute the solution by Duhamel's principle in
terms of the unitary group generated by the skew-selfadjoint operator
\[
\left(\begin{array}{cc}
0 & -\partial_{1}\\
-\partial_{1} & 0
\end{array}\right).
\]
This would be the simplest autonomous case. Let us now, based on this,
consider a slightly more complicated situation, which is, however,
still autonomous:
\begin{align}
 & \left(\partial_{0}\left(\begin{array}{cc}
\X_{\mathbb{R}\setminus\,]-\varepsilon,0[}(m_{1}) & 0\\
0 & \X_{\mathbb{R}\setminus\,]-\varepsilon,\varepsilon[}(m_{1})
\end{array}\right)+\left(\begin{array}{cc}
\X_{]-\varepsilon,0[}(m_{1}) & 0\\
0 & \X_{]-\varepsilon,\varepsilon[}(m_{1})
\end{array}\right)+\left(\begin{array}{cc}
0 & -\partial_{1}\\
-\partial_{1} & 0
\end{array}\right)\right)\left(\begin{array}{c}
u\\
v
\end{array}\right)\nonumber \\
 & =\left(\begin{array}{c}
\partial_{0}^{-1}f\\
0
\end{array}\right),\label{eq:auto_ex}
\end{align}

where $\X_{I}(m_{1})$ denotes the spatial multiplication operator
with the cut-off function $\X_{I},$ i.e. $\left(\X_{I}(m_{1})f\right)(t,x)=\X_{I}(x)f(t,x)$
for almost every $(t,x)\in\mathbb{R}\times\mathbb{R}$, every $f\in H_{\rho,0}(\mathbb{R};L^{2}(\mathbb{R}))$
and $I\subseteq\mathbb{R}$. In the notation of the previous section
we have 
\[
M_{0}\left(m_{0}\right)\coloneqq\left(\begin{array}{cc}
\X_{\mathbb{R}\setminus\,]-\varepsilon,0[}(m_{1}) & 0\\
0 & \X_{\mathbb{R}\setminus\,]-\varepsilon,\varepsilon[}(m_{1})
\end{array}\right)
\]

and

\[
M_{1}\left(m_{0}\right):=\left(\begin{array}{cc}
\X_{]-\varepsilon,0[}(m_{1}) & 0\\
0 & \X_{]-\varepsilon,\varepsilon[}(m_{1})
\end{array}\right)
\]
and both are obviously not time-dependent. Note that our solution
condition (\ref{eq:pos_def}) is satisfied and hence, according to
our findings, problem (\ref{eq:auto_ex}) is well-posed in the sense
of Theorem \ref{thm:Solutiontheory}. By the dependence of the operators
$M_{0}(m_{0})$ and $M_{1}(m_{0})$ on the spatial parameter, we see
that (\ref{eq:auto_ex}) changes its type from hyperbolic to elliptic
to parabolic and back to hyperbolic and so standard semigroup techniques
are not at hand to solve the equation. Indeed, in the subregion $]-\varepsilon,0[$
the problem reads as 

\[
\left(\begin{array}{c}
u\\
v
\end{array}\right)+\left(\begin{array}{cc}
0 & -\partial_{1}\\
-\partial_{1} & 0
\end{array}\right)\left(\begin{array}{c}
u\\
v
\end{array}\right)=\left(\begin{array}{c}
\partial_{0}^{-1}f\\
0
\end{array}\right),
\]

which may be rewritten as an elliptic equation for $u$ of the form
\[
u-\partial_{1}^{2}u=\partial_{0}^{-1}f.
\]

For the region $]0,\varepsilon[$ we get 
\[
\left(\partial_{0}\left(\begin{array}{cc}
1 & 0\\
0 & 0
\end{array}\right)+\left(\begin{array}{cc}
0 & 0\\
0 & 1
\end{array}\right)+\left(\begin{array}{cc}
0 & -\partial_{1}\\
-\partial_{1} & 0
\end{array}\right)\right)\left(\begin{array}{c}
u\\
v
\end{array}\right)=\left(\begin{array}{c}
\partial_{0}^{-1}f\\
0
\end{array}\right),
\]

which yields a parabolic equation for $u$ of the form 
\[
\partial_{0}u-\partial_{1}^{2}u=\partial_{0}^{-1}f.
\]

In the remaining subdomain $\mathbb{R}\setminus\,]-\varepsilon,\varepsilon[$
the problem is of the original form (\ref{eq:wave-eq}), which corresponds
to a hyperbolic problem for $u$. \\
To turn this into a genuinely time-dependent problem we now make a
modification to problem (\ref{eq:auto_ex}). We define the function
\[
\varphi(t)\coloneqq\begin{cases}
0 & \mbox{ if }t\leq0,\\
t & \mbox{ if }0<t\leq1,\\
1 & \mbox{ if }1<t
\end{cases}\quad(t\in\mathbb{R})
\]
and consider the material-law operator 
\[
M_{0}\left(m_{0}\right)=\varphi(m_{0})\left(\begin{array}{cc}
\X_{\mathbb{R}\setminus\,]-\varepsilon,0[}(m_{1}) & 0\\
0 & \X_{\mathbb{R}\setminus\,]-\varepsilon,\varepsilon[}(m_{1})
\end{array}\right),
\]

which now also degenerates in time. Moreover we modify $M_{1}(m_{0})$
by adding a time-dependence of the form 
\[
M_{1}(m_{0})=\left(\begin{array}{cc}
\X_{]-\infty,0[}(m_{0})+\X_{[0,\infty[}(m_{0})\X_{]-\varepsilon,0[}(m_{1}) & 0\\
0 & \X_{]-\infty,0[}(m_{0})+\X_{[0,\infty[}(m_{0})\X_{]-\varepsilon,\varepsilon[}(m_{1})
\end{array}\right).
\]
We show that this time-dependent material law still satisfies our
solvability condition. To this end let $\rho>0.$ Note that 
\[
\varphi'(t)=\begin{cases}
1 & \mbox{ if }t\in]0,1[,\\
0 & \mbox{ otherwise}
\end{cases}
\]

and thus, for $t\leq0$ we have 
\[
\rho M_{0}(t)+\frac{1}{2}\dot{M}_{0}(t)+\Re M_{1}(t)=\left(\begin{array}{cc}
1 & 0\\
0 & 1
\end{array}\right)\geq1.
\]

For $0<t\leq1$ we estimate 
\begin{align*}
 & \rho M_{0}(t)+\frac{1}{2}\dot{M}_{0}(t)+\Re M_{1}(t)\\
 & =\left(\frac{1}{2}+\rho t\right)\left(\begin{array}{cc}
\X_{\mathbb{R}\setminus\,]-\varepsilon,0[}(m_{1}) & 0\\
0 & \X_{\mathbb{R}\setminus\,]-\varepsilon,\varepsilon[}(m_{1})
\end{array}\right)+\left(\begin{array}{cc}
\X_{]-\varepsilon,0[}(m_{1}) & 0\\
0 & \X_{]-\varepsilon,\varepsilon[}(m_{1})
\end{array}\right)\geq\frac{1}{2}
\end{align*}

and, finally, for $t>1$ we obtain that 
\begin{align*}
 & \rho M_{0}(t)+\frac{1}{2}\dot{M}_{0}(t)+\Re M_{1}(t)\\
 & =\rho\left(\begin{array}{cc}
\X_{\mathbb{R}\setminus\,]-\varepsilon,0[}(m_{1}) & 0\\
0 & \X_{\mathbb{R}\setminus\,]-\varepsilon,\varepsilon[}(m_{1})
\end{array}\right)+\left(\begin{array}{cc}
\X_{]-\varepsilon,0[}(m_{1}) & 0\\
0 & \X_{]-\varepsilon,\varepsilon[}(m_{1})
\end{array}\right)\geq\min\{\rho,1\}.
\end{align*}

\begin{rem}
Note that the spatial operator $\left(\begin{array}{cc}
0 & -\partial_{1}\\
-\partial_{1} & 0
\end{array}\right)$ in the previous example can be substituted by every skew-selfadjoint
operator. In applications, it turns out that this operator typically
is a block operator matrix of the form $\left(\begin{array}{cc}
0 & C^{\ast}\\
-C & 0
\end{array}\right),$ where $C$ is a densely defined closed linear operator between two
Hilbert spaces. Indeed, even the one-dimensional transport equation
shares this form, if one decomposes the functions in their even and
odd parts (see \cite[p. 17 f.]{Picard_2012_mother}). Moreover, it
should be noted that the block structures of the operator $\left(\begin{array}{cc}
0 & C^{\ast}\\
-C & 0
\end{array}\right)$ and of the operators $M_{0}(m_{0})$ and $M_{1}(m_{0})$ need not
to be comparable (it turns out that this naturally arises in the study
of boundary control systems, cf. \cite{Picard2012-conservative},
\cite{Picard_2012_Control}). In those cases the semi-group approach
for showing well-posedness is not applicable, without further requirements
on the block structures of the involved operators.
\end{rem}

\subsection*{\label{sub:A-Perturbation-Result}Some perturbation results}

In applications, it is useful to have a perturbation result at hand.
To this end, we assume we are given a linear mapping 
\[
M_{\infty}\colon D(M_{\infty})\subseteq\bigcap_{\rho\geq\rho_{0}}H_{\rho,0}\left(\mathbb{R};H\right)\to\bigcap_{\rho\geq\rho_{0}}H_{\rho,0}\left(\mathbb{R};H\right)
\]
for some $\rho_{0}>0$ in the way that for all $\rho\geq\rho_{0}$
we have that $D(M_{\infty})\subseteq H_{\rho,0}(\mathbb{R};H)$ is
dense%
\footnote{Note that as an example $\interior C_{\infty}(\mathbb{R};H)$ is dense
in $H_{\rho,0}(\mathbb{R};H)$ for all $\rho>0$. %
} and $M_{\infty}$ considered as a mapping from $H_{\rho,0}(\mathbb{R};H)$
to $H_{\rho,0}(\mathbb{R};H)$ is continuous. The assumptions give
rise to a continuous extension, denoted with the same symbol. A straightforward
consequence of our previous findings is the following.
\begin{thm}
\label{thm:Solutiontheory-per1} Let $A\colon D(A)\subseteq H\to H$
be skew-selfadjoint and $M_{0},M_{1}\in L_{s}^{\infty}(\mathbb{R};L(H)).$
Furthermore, assume that $M_{0}$ satisfies the properties (\ref{selfadjoint})-(\ref{differentiable})
and that (\ref{eq:pos_def}) holds. Assume that 
\[
\limsup_{\rho\to\infty}\left\Vert M_{\infty}\right\Vert _{L(H_{\rho,0}(\mathbb{R};H))}<c_{0}.
\]
Then there exists $\rho_{1}>0$ such that the operator $\partial_{0}M_{0}\left(m_{0}\right)+M_{1}\left(m_{0}\right)+M_{\infty}+A$
is continuously invertible in $H_{\rho,0}(\mathbb{R};H)$ for each
$\rho\geq\rho_{1}$. If, in addition, $M_{\infty}$ is causal, then
so is $\left(\partial_{0}M_{0}\left(m_{0}\right)+M_{1}\left(m_{0}\right)+M_{\infty}+A\right)^{-1}$.\end{thm}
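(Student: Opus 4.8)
The plan is to deduce the statement from Theorem \ref{thm:Solutiontheory} by a Neumann series (perturbation-of-the-identity) argument. Write $\mathcal{B}\coloneqq\partial_{0}M_{0}(m_{0})+M_{1}(m_{0})+A$, considered in $H_{\rho,0}(\mathbb{R};H)$ with its maximal domain $\mathcal{D}$. By Theorem \ref{thm:Solutiontheory}, for every $\rho\geq\rho_{0}$ the operator $\mathcal{B}$ is continuously invertible with $\|\mathcal{B}^{-1}\|_{L(H_{\rho,0}(\mathbb{R};H))}\leq 1/c_{0}$, and its range is all of $H_{\rho,0}(\mathbb{R};H)$ while $\mathcal{B}^{-1}$ has range $\mathcal{D}$. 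Since $\limsup_{\rho\to\infty}\|M_{\infty}\|_{L(H_{\rho,0}(\mathbb{R};H))}<c_{0}$, I would first fix $\rho_{1}\geq\rho_{0}$ so large that $\|M_{\infty}\|_{L(H_{\rho,0}(\mathbb{R};H))}<c_{0}$ for all $\rho\geq\rho_{1}$; then $\|\mathcal{B}^{-1}M_{\infty}\|_{L(H_{\rho,0}(\mathbb{R};H))}<1$, so $1+\mathcal{B}^{-1}M_{\infty}$ is boundedly invertible on $H_{\rho,0}(\mathbb{R};H)$ with inverse $\sum_{n=0}^{\infty}(-\mathcal{B}^{-1}M_{\infty})^{n}$.

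The next step is the factorization. Because $M_{\infty}\in L(H_{\rho,0}(\mathbb{R};H))$ is everywhere defined and bounded, the operator $\partial_{0}M_{0}(m_{0})+M_{1}(m_{0})+M_{\infty}+A$ has the same domain $\mathcal{D}$ as $\mathcal{B}$, and on $\mathcal{D}$ we have $\partial_{0}M_{0}(m_{0})+M_{1}(m_{0})+M_{\infty}+A=\mathcal{B}\,(1+\mathcal{B}^{-1}M_{\infty})$. I would then check that $1+\mathcal{B}^{-1}M_{\infty}$ restricts to a bijection of $\mathcal{D}$ onto $\mathcal{D}$: it maps $\mathcal{D}$ into $\mathcal{D}$ since $\mathcal{B}^{-1}$ has range $\mathcal{D}$, and it is onto $\mathcal{D}$ because for $v\in\mathcal{D}$ the element $u\coloneqq(1+\mathcal{B}^{-1}M_{\infty})^{-1}v$ satisfies $u=v-\mathcal{B}^{-1}M_{\infty}u\in\mathcal{D}$. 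Consequently $\partial_{0}M_{0}(m_{0})+M_{1}(m_{0})+M_{\infty}+A\colon\mathcal{D}\to H_{\rho,0}(\mathbb{R};H)$ is a bijection with bounded inverse $\left(1+\mathcal{B}^{-1}M_{\infty}\right)^{-1}\mathcal{B}^{-1}$, which gives continuous invertibility for every $\rho\geq\rho_{1}$.

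For causality, assume $M_{\infty}$ is causal. The solution operator equals $\left(1+\mathcal{B}^{-1}M_{\infty}\right)^{-1}\mathcal{B}^{-1}$. By the causality result established above for $\mathcal{B}^{-1}$, and since compositions of causal linear operators are causal, $\mathcal{B}^{-1}M_{\infty}$ and all its powers $(-\mathcal{B}^{-1}M_{\infty})^{n}$ are causal; the set of causal operators in $L(H_{\rho,0}(\mathbb{R};H))$ is closed under operator-norm limits (the defining implication in Definition \ref{def: causality} passes to the limit, using linearity), so the Neumann series $\left(1+\mathcal{B}^{-1}M_{\infty}\right)^{-1}$ is causal; composing once more with the causal operator $\mathcal{B}^{-1}$ yields causality of the solution operator.

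The only genuinely delicate point — and hence the main thing to get right rather than a real obstacle — is the domain bookkeeping in the factorization: verifying that $1+\mathcal{B}^{-1}M_{\infty}$ maps $\mathcal{D}$ bijectively onto $\mathcal{D}$ so that $\partial_{0}M_{0}(m_{0})+M_{1}(m_{0})+M_{\infty}+A=\mathcal{B}\,(1+\mathcal{B}^{-1}M_{\infty})$ is an identity of operators with domain exactly $\mathcal{D}$, together with choosing $\rho_{1}$ to meet both $\rho_{1}\geq\rho_{0}$ and the smallness of $\|M_{\infty}\|_{L(H_{\rho,0}(\mathbb{R};H))}$ simultaneously. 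Everything else is an immediate consequence of Theorem \ref{thm:Solutiontheory} and the Neumann series.
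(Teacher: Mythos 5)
Your proof is correct and is essentially the paper's argument in different clothing: the paper applies the contraction mapping principle to $\Phi(u)=\mathcal{B}^{-1}(f-M_{\infty}u)$, whose Picard iteration is exactly your Neumann series for $\left(1+\mathcal{B}^{-1}M_{\infty}\right)^{-1}\mathcal{B}^{-1}$, and it obtains causality from the causality of $\Phi$ as a sum and composition of causal maps, which is the fixed-point counterpart of your limit-of-causal-operators argument. Your explicit domain bookkeeping for the factorization $\mathcal{B}+M_{\infty}=\mathcal{B}\left(1+\mathcal{B}^{-1}M_{\infty}\right)$ is a worthwhile detail that the paper leaves implicit.
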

\begin{proof}
Let $\rho_{1}>0$ be such that $\left\Vert M_{\infty}\right\Vert _{L(H_{\rho,0}(\mathbb{R};H))}<c_{0}$
for all $\rho\ge\rho_{1}$. Let $f\in H_{\rho,0}(\mathbb{R};H)$.
Then, the mapping 
\begin{eqnarray*}
\Phi\colon H_{\rho,0}(\mathbb{R};H) & \to & H_{\rho,0}(\mathbb{R};H)\\
u & \mapsto & \left(\partial_{0}M_{0}(m_{0})+M_{1}(m_{0})+A\right)^{-1}(f-M_{\infty}u)
\end{eqnarray*}
is a strict contraction, by Theorem \ref{thm:Solutiontheory}. Observing
that $u\in H_{\rho,0}(\mathbb{R};H)$ satisfies 
\[
\left(\partial_{0}M_{0}\left(m_{0}\right)+M_{1}\left(m_{0}\right)+M_{\infty}+A\right)u=f
\]
if and only if it is a fixed point of $\Phi$, we get existence and
uniqueness of a solution with the help of the contraction mapping
principle. If $M_{\infty}$ is causal, then so is $\Phi$ as a composition
and a sum of causal mappings. Hence, $\left(\partial_{0}M_{0}\left(m_{0}\right)+M_{1}\left(m_{0}\right)+M_{\infty}+A\right)^{-1}$
is causal.\end{proof}
\begin{rem}
Note that this perturbation result applies similarly to the case of
non-linear perturbations if the best Lipschitz constant $\left|M_{\infty}\right|_{\rho,\mathrm{Lip}}$
of the perturbation $M_{\infty}$ considered as an operator in $H_{\rho,0}\left(\mathbb{R};H\right),$
$\rho\in\oi0\infty$, satisfies
\[
\limsup_{\rho\to\infty}\left|M_{\infty}\right|_{\rho,\mathrm{Lip}}<c_{0}.
\]

\end{rem}
It is possible to derive the following more sophisticated perturbation
result, which needs little more effort. We introduce the following
notation: For a closed subspace $V\subseteq H$ we denote by $\iota_{V}:V\to H$
the canonical embedding of $V$ into $H$. It turns out that then
the adjoint $\iota_{V}^{\ast}:H\to V$ is the orthogonal projection
onto $V.$ Consequently $P_{V}\coloneqq\iota_{V}\iota_{V}^{\ast}:H\to H$
becomes the orthogonal projector on $V$ and $1-P_{V}=P_{V^{\bot}}=\iota_{V^{\bot}}\iota_{V^{\bot}}^{\ast}$.
\begin{thm}
\label{thm:Solutiontheory-per2} Let $A\colon D(A)\subseteq H\to H$
be skew-selfadjoint and $M_{0},M_{1}\in L_{s}^{\infty}(\mathbb{R};L(H)).$
Furthermore, assume that $M_{0}$ satisfies the properties (\ref{selfadjoint})-(\ref{differentiable}).
Moreover, assume $t\mapsto N(M_{0}\left(t\right))$ to be time-independent,
i.e., for all $t\in\mathbb{R}$ we have
\[
N(M_{0}\left(t\right))=N(M_{0}\left(0\right))\eqqcolon V.
\]
We further assume that for some set of measure zero $N_{1}\subseteq\mathbb{R}$
the following estimates hold:
\begin{equation}
\begin{array}{l}
\exists c_{0}>0,\rho_{0}>0\,\forall t\in\mathbb{R}\setminus N_{1}:\iota_{V}^{*}\Re M_{1}(t)\iota_{V}\geq c_{0},\end{array}\label{eq:pos_def-1}
\end{equation}
and
\begin{equation}
\exists c_{1}>0\:\forall t\in\mathbb{R}\setminus N_{1}:\iota_{V^{\bot}}^{*}M_{0}(t)\iota_{V^{\bot}}\geq c_{1}.\label{eq:pos_def-2}
\end{equation}
Furthermore, assume that $\limsup_{\rho\to\infty}\left\Vert M_{\infty}\right\Vert _{L(H_{\rho,0}(\mathbb{R};H))}<\infty$
and there exist $\tilde{\rho},\varepsilon>0$ such that for all $\rho\geq\tilde{\rho}$
and $u\in D(M_{\infty})$ 
\[
\left\langle \left.\Re M_{\infty}P_{V}u\right|P_{V}u\right\rangle _{H_{\rho,0}(\mathbb{R};H)}>(\varepsilon-c_{0})\left|P_{V}u\right|_{H_{\rho,0}(\mathbb{R};H)}^{2}.
\]
Then there exists $\rho_{1}>0$ such that the operator $\partial_{0}M_{0}\left(m_{0}\right)+M_{1}\left(m_{0}\right)+M_{\infty}+A$
is continuously invertible in $H_{\rho,0}(\mathbb{R};H)$ for every
$\rho\geq\rho_{1}$. If, in addition, $M_{\infty}$ is causal, then
so is $\left(\partial_{0}M_{0}\left(m_{0}\right)+M_{1}\left(m_{0}\right)+M_{\infty}+A\right)^{-1}$.
\end{thm}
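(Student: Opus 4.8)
The plan is to observe that hypotheses (\ref{eq:pos_def-1})--(\ref{eq:pos_def-2}) already force the pointwise positivity (\ref{eq:pos_def}) for the pair $(M_0,M_1)$, so that Theorem \ref{thm:Solutiontheory} becomes available for the unperturbed operator $\mathcal B_0\coloneqq\partial_0M_0(m_0)+M_1(m_0)+A$, and then to transfer the resulting positive definiteness to $\mathcal B\coloneqq\mathcal B_0+M_\infty$ by splitting the state into its $V$- and $V^\bot$-components. Two remarks follow from the time-independence of $V=N(M_0(t))$: differentiating the identity $M_0(\cdot)x\equiv0$, valid for $x\in V$, shows $\dot M_0(t)\iota_V=0$, and since $M_0(t)\geq0$ is selfadjoint with kernel exactly $V$, (\ref{eq:pos_def-2}) amounts to $M_0(t)\geq c_1P_{V^\bot}$. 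Writing $x=P_Vx+P_{V^\bot}x$ and absorbing the cross term with a small weight $\theta>0$ then gives, for $t\notin N_1$,
\[
\rho M_0(t)+\tfrac12\dot M_0(t)+\Re M_1(t)\ \geq\ (c_0-\theta\|M_1\|_\infty)P_V+(\rho c_1-C_\theta)P_{V^\bot},
\]
where $C_\theta$ depends only on $|M_0|_{\mathrm{Lip}}$, $\|M_1\|_\infty$ and $\theta$; for $\theta$ small and $\rho$ large the right-hand side is bounded below by a positive constant times the identity, i.e.\ (\ref{eq:pos_def}) holds. Hence Theorem \ref{thm:Solutiontheory} yields that $\mathcal B_0$ is continuously invertible and $\mathcal B_0^*=M_0(m_0)\partial_0^*+M_1(m_0)^*-A$ on the maximal domain $\mathcal D$ (by Corollary \ref{cor:domain-of-adjoint} the two descriptions of $\mathcal D$ agree); in particular $\mathcal B_0$ is closed, so $\mathcal B=\mathcal B_0+M_\infty$ is closed on $\mathcal D$ with $\mathcal B^*=M_0(m_0)\partial_0^*+M_1(m_0)^*-A+M_\infty^*$, again with domain $\mathcal D$, since $M_\infty$ is bounded and everywhere defined.

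The central estimate is as follows. Fix $\rho_*\geq\max\{\tilde\rho,\rho_0\}$ and $M>0$ with $\|M_\infty\|_{L(H_{\rho,0}(\mathbb R;H))}\leq M$ for all $\rho\geq\rho_*$. For $u\in D(\partial_0)\cap D(A)$, Lemma \ref{lem:integr_eq} together with $a\to\infty$ (the boundary term $\tfrac12\langle u(a)|M_0(a)u(a)\rangle e^{-2\rho a}$ being non-negative and tending to $0$ along a suitable sequence) and $\Re\langle Au|u\rangle=0$ gives
\[
\Re\langle\mathcal B_0u|u\rangle_{\rho,0}=\int_{\mathbb R}\langle(\rho M_0(t)+\tfrac12\dot M_0(t)+\Re M_1(t))u(t)|u(t)\rangle e^{-2\rho t}\,\mathrm{d}t .
\]
Because $P_V$ is a time-independent multiplication operator, $\int_{\mathbb R}|P_Vu(t)|^2e^{-2\rho t}\,\mathrm{d}t=|P_Vu|_{\rho,0}^2$, so the displayed identity and the operator inequality above give $\Re\langle\mathcal B_0u|u\rangle_{\rho,0}\geq(c_0-\theta\|M_1\|_\infty)|P_Vu|_{\rho,0}^2+(\rho c_1-C_\theta)|P_{V^\bot}u|_{\rho,0}^2$. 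Splitting $u=P_Vu+P_{V^\bot}u$ in $\Re\langle M_\infty u|u\rangle_{\rho,0}$, using a second small weight $\eta>0$, and bounding the diagonal $V$-term below by $(\varepsilon-c_0)|P_Vu|_{\rho,0}^2$ via the hypothesis on $M_\infty$, one finds $\Re\langle M_\infty u|u\rangle_{\rho,0}\geq(\varepsilon-c_0-M\eta)|P_Vu|_{\rho,0}^2-M(\eta^{-1}+1)|P_{V^\bot}u|_{\rho,0}^2$. Adding the two, the $V$-coefficient is $\varepsilon-\theta\|M_1\|_\infty-M\eta$ and the $V^\bot$-coefficient is $\rho c_1-C_\theta-M(\eta^{-1}+1)$; choosing $\theta,\eta$ with $\theta\|M_1\|_\infty+M\eta\leq\varepsilon/2$ and $\rho_1\geq\rho_*$ so large that $\rho_1c_1-C_\theta-M(\eta^{-1}+1)\geq\varepsilon/2$ yields $\Re\langle\mathcal Bu|u\rangle_{\rho,0}\geq\tfrac\varepsilon2|u|_{\rho,0}^2$ for $\rho\geq\rho_1$ and $u\in D(\partial_0)\cap D(A)$. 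Since $u\mapsto\tfrac\varepsilon2|u|_{\rho,0}^2-\Re\langle M_\infty u|u\rangle_{\rho,0}$ is continuous, Lemma \ref{lem:pos_def_D} (with $a=\infty$) extends this to all $u\in\mathcal D=D(\mathcal B)$.

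Running the same computation for the adjoint — through Corollary \ref{cor:domain-of-adjoint} and the identity established in the proof of Lemma \ref{lem:adjoint-pos_def}, with the $\partial_0^*$-analogues of Lemmas \ref{lem:resolvent} and \ref{lem:pos_def_D} (cf.\ the remark following Lemma \ref{lem:resolvent}) used for the passage to $\mathcal D$ — gives $\Re\langle\mathcal B^*f|f\rangle_{\rho,0}\geq\tfrac\varepsilon2|f|_{\rho,0}^2$ for $\rho\geq\rho_1$ and all $f\in D(\mathcal B^*)=\mathcal D$, hence $N(\mathcal B^*)=\{0\}$. Combined with the estimate of the previous paragraph, $\mathcal B$ is injective with $|\mathcal Bu|_{\rho,0}\geq\tfrac\varepsilon2|u|_{\rho,0}$ and therefore has closed range; since $R(\mathcal B)^\bot=N(\mathcal B^*)=\{0\}$, the projection theorem gives $R(\mathcal B)=H_{\rho,0}(\mathbb R;H)$, so $\partial_0M_0(m_0)+M_1(m_0)+M_\infty+A$ is continuously invertible for every $\rho\geq\rho_1$. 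For the causality assertion, assume $M_\infty$ causal and repeat the central estimate with a finite cut-off $a$: the boundary term in Lemma \ref{lem:integr_eq} is still non-negative, and causality of $M_\infty$ (which commutes with $P_V$, the latter acting spatially) allows one to replace $u$ by $\X_{]-\infty,a]}(m_0)u$ in the $M_\infty$-term, so that $\int_{-\infty}^a\Re\langle\mathcal Bu|u\rangle(t)e^{-2\rho t}\,\mathrm{d}t\geq\tfrac\varepsilon2\int_{-\infty}^a|u(t)|^2e^{-2\rho t}\,\mathrm{d}t$, first on $D(\partial_0)\cap D(A)$ and then, by Lemma \ref{lem:pos_def_D}, on $\mathcal D$; inserting $u=\mathcal B^{-1}f$ with $\X_{]-\infty,a]}(m_0)f=0$ forces $\X_{]-\infty,a]}(m_0)u=0$, which is causality of the inverse.

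The step I expect to be the main obstacle is the passage to the maximal domain $\mathcal D$ for the \emph{adjoint} operator: this requires the $\partial_0^*$-versions of the regularisation Lemma \ref{lem:resolvent} and the density Lemma \ref{lem:pos_def_D}, and it needs one to keep careful track of the fact that $M_\infty$ — unlike $M_0(m_0)$ and $M_1(m_0)$, which are multiplication operators in time — still commutes with the temporal cut-offs (when causal) and with the spatial projector $P_V$. The time-independence of $V$ is exactly what legitimises all these commutations, and it is also what permits the very first step, namely splitting off the $V$-block, on which $\rho M_0$ supplies no positivity at all and only $\Re M_1$ (helped or hindered by $\Re M_\infty$) keeps the operator coercive.
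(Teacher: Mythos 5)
Your proof is correct and follows essentially the same route as the paper's, which isolates your central estimate as Lemma \ref{lem:pos_def_D-1} (same $P_V$/$P_{V^\bot}$ splitting, same weighted absorption of the cross terms, same treatment of $M_\infty$ and of causality); the only superfluous ingredients are the parenthetical claim that $M_\infty$ commutes with $P_V$ — which is neither assumed nor needed, since only the commutation of the spatial projector $P_V$ with the temporal cut-off $\X_{]-\infty,a]}(m_0)$ enters — and the re-derivation of the accretivity of $\mathcal{B}^{*}$, which the paper obtains for free from $\Re\langle\mathcal{B}^{*}u|u\rangle=\Re\langle\mathcal{B}u|u\rangle$ on the common domain $D(\mathcal{B})=D(\mathcal{B}^{*})=\mathcal{D}$. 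Your explicit verification that (\ref{eq:pos_def-1})--(\ref{eq:pos_def-2}) imply (\ref{eq:pos_def}) is a worthwhile addition, since the paper invokes Theorem \ref{thm:Solutiontheory} for the unperturbed operator without spelling this step out.
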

The result follows by adapting the method of proof of Theorem \ref{thm:Solutiontheory}.
The crucial estimate to conclude the proof of Theorem \ref{thm:Solutiontheory-per2}
is given in the following lemma.
\begin{lem}
\label{lem:pos_def_D-1} Let $\rho\geq\rho_{0}.$ Assume that $M_{0}$
satisfies the properties (\ref{selfadjoint})-(\ref{differentiable}),
that $t\mapsto N(M_{0}(t))$ is time-independent and that the inequalities
(\ref{eq:pos_def-1})-(\ref{eq:pos_def-2}) hold. Then for all $\epsilon\in]0,c_{0}[$
there exists $c>0$ such that for all sufficiently large $\rho$ and
for $u\in\mathcal{D}$ and $a\in\mathbb{R}$ we have that
\begin{align}
 & \intop_{-\infty}^{a}\Re\left\langle \left.\left(\partial_{0}M_{0}(m_{0})+M_{1}(m_{0})+A\right)u\right|u\right\rangle (t)e^{-2\rho t}\,\mathrm{d}t\nonumber \\
 & \geq\rho c\intop_{-\infty}^{a}\left|P_{V^{\bot}}u(t)\right|^{2}e^{-2\rho t}\,\mathrm{d}t+(c_{0}-\epsilon)\intop_{-\infty}^{a}\left|P_{V}u(t)\right|^{2}e^{-2\rho t}\,\mathrm{d}t.\label{eq:pos_def_per1}
\end{align}
 \end{lem}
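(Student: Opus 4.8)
The plan is to mimic the proof of Lemma \ref{lem:integr_eq} and Corollary \ref{cor:strict_pos_def}, but to keep track of the $V$- and $V^{\bot}$-components separately. First I would establish the estimate for $u\in D(\partial_{0})\cap D(A)$; the passage to $u\in\mathcal{D}$ is then immediate from Lemma \ref{lem:pos_def_D} with the continuous functional
\[
G(u)=\rho c\langle\X_{]-\infty,a]}(m_{0})P_{V^{\bot}}u|P_{V^{\bot}}u\rangle_{H_{\rho,0}(\mathbb{R};H)}+(c_{0}-\epsilon)\langle\X_{]-\infty,a]}(m_{0})P_{V}u|P_{V}u\rangle_{H_{\rho,0}(\mathbb{R};H)},
\]
exactly as in Corollary \ref{cor:strict_pos_def}. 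For $u\in D(\partial_{0})\cap D(A)$ I would invoke Lemma \ref{lem:integr_eq} (noting $\Re\langle Au|u\rangle=0$) to rewrite the left-hand side of \eqref{eq:pos_def_per1} as
\[
\tfrac{1}{2}\langle u(a)|M_{0}(a)u(a)\rangle e^{-2\rho a}+\intop_{-\infty}^{a}\left\langle\left(\rho M_{0}(t)+\tfrac{1}{2}\dot{M}_{0}(t)+\Re M_{1}(t)\right)u(t)\Big|u(t)\right\rangle e^{-2\rho t}\,\mathrm{d}t,
\]
and the boundary term is $\geq0$ since $M_{0}(a)$ is non-negative, so it can be dropped.

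The core of the argument is then a pointwise lower bound, for a.e.\ $t$ and every $x\in H$, of the form
\[
\left\langle\left(\rho M_{0}(t)+\tfrac{1}{2}\dot{M}_{0}(t)+\Re M_{1}(t)\right)x\Big|x\right\rangle\geq \rho c\,|P_{V^{\bot}}x|^{2}+(c_{0}-\epsilon)\,|P_{V}x|^{2}.
\]
Here I would use the time-independence of $V=N(M_{0}(t))$ crucially: since $M_{0}(t)$ is selfadjoint with kernel $V$, it leaves $V^{\bot}$ invariant and annihilates $V$, so $M_{0}(t)=P_{V^{\bot}}M_{0}(t)P_{V^{\bot}}$ and by \eqref{eq:pos_def-2} we have $\langle M_{0}(t)x|x\rangle\geq c_{1}|P_{V^{\bot}}x|^{2}$. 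Differentiating the identity $M_{0}(t)=P_{V^{\bot}}M_{0}(t)P_{V^{\bot}}$ (legitimate since $P_{V^{\bot}}$ is constant in $t$) gives $\dot{M}_{0}(t)=P_{V^{\bot}}\dot{M}_{0}(t)P_{V^{\bot}}$, hence $|\langle\dot{M}_{0}(t)x|x\rangle|\leq|M_{0}|_{\mathrm{Lip}}|P_{V^{\bot}}x|^{2}$ by Lemma \ref{lem:product rule}. Writing $x=P_{V}x+P_{V^{\bot}}x=:x_{0}+x_{1}$, the quadratic form splits as
\[
\rho\langle M_{0}(t)x_{1}|x_{1}\rangle+\tfrac{1}{2}\langle\dot{M}_{0}(t)x_{1}|x_{1}\rangle+\langle\Re M_{1}(t)x|x\rangle
\]
and I estimate $\langle\Re M_{1}(t)x|x\rangle=\langle\Re M_{1}(t)x_{0}|x_{0}\rangle+2\Re\langle\Re M_{1}(t)x_{0}|x_{1}\rangle+\langle\Re M_{1}(t)x_{1}|x_{1}\rangle$. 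The first term is $\geq c_{0}|x_{0}|^{2}$ by \eqref{eq:pos_def-1}; the cross term and the last term are bounded in modulus by $C(|x_{0}||x_{1}|+|x_{1}|^{2})$ with $C=\|M_{1}\|_{\infty}$. A Young inequality $2C|x_{0}||x_{1}|\leq\frac{\epsilon}{2}|x_{0}|^{2}+\frac{2C^{2}}{\epsilon}|x_{1}|^{2}$ absorbs the cross term into the $(c_{0}-\epsilon)|x_{0}|^{2}$ part (with a little room to spare, using $\epsilon<c_{0}$), while the $V^{\bot}$-contribution becomes $(\rho c_{1}-\frac{1}{2}|M_{0}|_{\mathrm{Lip}}-C-\frac{2C^{2}}{\epsilon})|x_{1}|^{2}$, which for $\rho$ large is $\geq\rho c\,|x_{1}|^{2}$ for any fixed $c\in]0,c_{1}[$. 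Integrating this pointwise bound against $e^{-2\rho t}$ over $]-\infty,a]$ gives \eqref{eq:pos_def_per1}.

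The main obstacle is the bookkeeping of the coupling between the two subspaces: $\Re M_{1}(t)$ need not respect the decomposition $H=V\oplus V^{\bot}$, so the off-diagonal block produces a cross term that must be controlled by the positive contributions from both the $\rho M_{0}$ term (which only sees $V^{\bot}$) and the $\iota_{V}^{*}\Re M_{1}\iota_{V}$ term (which only sees $V$). Making these two sources cooperate — absorbing half of the cross term into each — forces the loss of an arbitrarily small $\epsilon$ on the $V$-side and the requirement that $\rho$ be large on the $V^{\bot}$-side, which is exactly the shape of the claimed estimate. One should double-check that all pointwise manipulations are valid a.e.\ (the exceptional null set being $N_{1}\supseteq N$), and that $\dot{M}_{0}(t)$ indeed maps into $V^{\bot}$ and annihilates $V$; the latter follows by differentiating $M_{0}(t)\iota_{V}=0$ and $\iota_{V}^{*}M_{0}(t)=0$, which hold for all $t$ by the standing hypothesis on the kernel.
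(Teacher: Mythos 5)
Your proposal is correct and follows essentially the same route as the paper: reduce to $u\in D(\partial_{0})\cap D(A)$ via Lemma \ref{lem:pos_def_D}, apply Lemma \ref{lem:integr_eq} and drop the non-negative boundary term, then prove the pointwise quadratic-form estimate by splitting along $V\oplus V^{\bot}$ and absorbing the cross terms with Young's inequality for large $\rho$. Your observation that $\dot{M}_{0}(t)=P_{V^{\bot}}\dot{M}_{0}(t)P_{V^{\bot}}$ is a slightly sharper version of the paper's use of the fact that $\dot{M}_{0}(t)$ vanishes on $V$, but it does not change the argument.
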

\begin{proof}
In order to prove (\ref{eq:pos_def_per1}) observe that by Lemma \ref{lem:pos_def_D}
it suffices to verify the inequality for $u\in D(\partial_{0})\cap D(A)$.
Moreover, by Lemma \ref{lem:integr_eq}, we only need to estimate
\[
\frac{1}{2}\langle u(a)|M_{0}(a)u(a)\rangle e^{-2\rho a}+\intop_{-\infty}^{a}\left\langle \left.\rho M_{0}(t)u(t)+\frac{1}{2}\dot{M}_{0}(t)u(t)+\Re M_{1}(t)u(t)\right|u(t)\right\rangle e^{-2\rho t}\mbox{ d}t.
\]
Since $M_{0}(a)$ is non-negative, we are reduced to showing an estimate
for 
\[
\left\langle \left.\rho M_{0}(t)\phi+\frac{1}{2}\dot{M}_{0}(t)\phi+\Re M_{1}(t)\phi\right|\phi\right\rangle 
\]
for all $t\in\mathbb{R}$ and $\phi\in H$. Using that $\dot{M}_{0}(t)$
vanishes on $V=N(M_{0}(0))$, we get that
\begin{align*}
 & \left\langle \left.\rho M_{0}(t)\phi+\frac{1}{2}\dot{M}_{0}(t)\phi+\Re M_{1}(t)\phi\right|\phi\right\rangle \\
 & =\rho\left\langle \left.M_{0}(t)P_{V^{\bot}}\phi\right|P_{V^{\bot}}\phi\right\rangle +\frac{1}{2}\langle\dot{M}_{0}(t)P_{V^{\bot}}\phi|P_{V^{\bot}}\phi+P_{V}\phi\rangle\\
 & \quad+2\left\langle \left.\Re M_{1}(t)P_{V^{\bot}}\phi\right|P_{V}\phi\right\rangle +\left\langle \left.\Re M_{1}(t)P_{V^{\bot}}\phi\right|P_{V^{\bot}}\phi\right\rangle \\
 & \quad+\left\langle \left.\Re M_{1}(t)P_{V}\phi\right|P_{V}\phi\right\rangle \\
 & \geq\left(\rho c_{1}-\frac{1}{2}|M_{0}|_{\mathrm{Lip}}\|M_{1}(m_{0})\|_{L(H_{\rho,0}(\mathbb{R};H))}\right)\left|P_{V^{\bot}}\phi\right|^{2}\\
 & \quad-\left(|M_{0}|_{\mathrm{Lip}}+2\|M_{1}(m_{0})\|_{L(H_{\rho,0}(\mathbb{R};H))}\right)\left|P_{V^{\bot}}\phi\right|\left|P_{V}\phi\right|+c_{0}\left|P_{V}\phi\right|^{2}.
\end{align*}
The assertion follows now by applying the trivial inequality $2ab\leq\frac{1}{\delta}a^{2}+\delta b^{2}$
for $a,b,\delta>0.$
\end{proof}
$\,$
\begin{proof}[Proof of Theorem \ref{thm:Solutiontheory-per2}]
 We denote $\mathcal{B}\coloneqq\partial_{0}M_{0}(m_{0})+M_{1}(m_{0})+A+M_{\infty}$
considered as an operator in $H_{\rho,0}(\mathbb{R};H)$. Note that
the maximal domain in $H_{\rho,0}(\mathbb{R};H)$ coincides with $\mathcal{D}$.
Moreover, since $M_{\infty}$ is continuous, we have, by Theorem \ref{thm:Solutiontheory},
that 
\[
\mathcal{B}^{*}=M_{0}(m_{0})\partial_{0}^{*}+M_{1}(m_{0})^{*}-A+M_{\infty}^{*}
\]
 with domain being equal to $\mathcal{D}$, by Lemma \ref{cor:domain-of-adjoint}.
Let $\varepsilon>0$ and choose $\tilde{\rho}$ such that $\sup_{\rho\geq\tilde{\rho}}\|M_{\infty}\|_{L(H_{\rho,0}(\mathbb{R};H))}<\infty$
and 
\[
\left\langle \left.\Re M_{\infty}P_{V}u\right|P_{V}u\right\rangle _{H_{\rho,0}(\mathbb{R};H)}\geq(\varepsilon-c_{0})\left|P_{V}u\right|_{H{}_{\rho,0}(\mathbb{R};H)}^{2}
\]
for each $\rho\geq\tilde{\rho}$ and $u\in D(M_{\infty})$. Note that
due to continuity the latter inequality also holds for every $u\in H_{\rho,0}(\mathbb{R};H).$
For $u\in\mathcal{D}$ there exists, according to Lemma \ref{lem:pos_def_D-1},
a constant $c>0$ such that for all sufficiently large $\rho\geq\tilde{\rho}$
we have 
\begin{align*}
\Re\langle\mathcal{B}u|u\rangle_{H{}_{\rho,0}(\mathbb{R};H)} & =\Re\left\langle \left.(\partial_{0}M_{0}(m_{0})+M_{1}(m_{0})+A)u\right|u\right\rangle _{H_{\rho,0}(\mathbb{R};H)}+\Re\left\langle \left.M_{\infty}u\right|u\right\rangle _{H_{\rho,0}(\mathbb{R};H)}\\
 & \geq\rho c\left|P_{V^{\bot}}u\right|_{H{}_{\rho,0}(\mathbb{R};H)}^{2}+\left(c_{0}-\frac{\varepsilon}{2}\right)\left|P_{V}u\right|_{H{}_{\rho,0}(\mathbb{R};H)}^{2}+\left(\varepsilon-c_{0}\right)\left|P_{V}u\right|_{H_{\rho,0}(\mathbb{R};H)}^{2}\\
 & \quad-2\lVert M_{\infty}\rVert_{L(H_{\rho,0}(\mathbb{R};H))}|P_{V}u|_{H{}_{\rho,0}(\mathbb{R};H)}|P_{V^{\bot}}u|_{H{}_{\rho,0}(\mathbb{R};H)}\\
 & \quad-\lVert M_{\infty}\rVert_{L(H_{\rho,0}(\mathbb{R};H))}|P_{V^{\bot}}u|_{H{}_{\rho,0}(\mathbb{R};H)}^{2}\\
 & \geq\left(\rho c-\|M_{\infty}\|_{L(H_{\rho,0}(\mathbb{R};H))}-\frac{1}{\delta}\lVert M_{\infty}\rVert_{L(H_{\rho,0}(\mathbb{R};H))}^{2}\right)\left|P_{V^{\bot}}u\right|_{H{}_{\rho,0}(\mathbb{R};H)}^{2}\\
 & \quad+\left(\frac{\varepsilon}{2}-\delta\right)\left|P_{V}u\right|_{H{}_{\rho,0}(\mathbb{R};H)}^{2}
\end{align*}
for each $0<\delta<\frac{\varepsilon}{2}$. By possibly increasing
$\rho$ such that 
\[
\rho c-\lVert M_{\infty}\rVert_{L(H_{\rho,0}(\mathbb{R};H))}-\frac{1}{\delta}\lVert M_{\infty}\rVert_{L(H{}_{\rho,0}(\mathbb{R};H))}^{2}\geq\tilde{c}>0
\]
 we deduce that for all $u\in\mathcal{D}$ the estimate 
\begin{equation}
\Re\langle\mathcal{B}u|u\rangle_{H_{\rho,0}(\mathbb{R};H)}\geq\tilde{c}|u|_{H_{\rho,0}(\mathbb{R};H)}^{2}\label{eq:B_pos}
\end{equation}
holds for all sufficiently large $\rho$. By $\Re\langle\mathcal{B}u|u\rangle_{H_{\rho,0}(\mathbb{R};H)}=\Re\left\langle \left.\mathcal{B}^{*}u\right|u\right\rangle _{H_{\rho,0}(\mathbb{R};H)}$
and $D(\mathcal{B})=\mathcal{D}=D(\mathcal{B}^{*})$, we deduce that
$\mathcal{B}^{*}$ is one-to-one. Hence, $\mathcal{B}$ is continuously
invertible and onto. For showing the causality of $\mathcal{B}^{-1}$
in case of a causal operator $M_{\infty}$ it suffices to prove, that
an inequality of the form 
\[
\Re\intop_{-\infty}^{a}\langle\mathcal{B}u|u\rangle(t)e^{-2\rho t}\mbox{ d}t\geq\tilde{c}\intop_{-\infty}^{a}|u(t)|^{2}e^{-2\rho t}\mbox{ d}t
\]
holds for every $a\in\mathbb{R}$ and $u\in\mathcal{D}.$ The latter
can be shown as above, observing that due to the causality of $M_{\infty}$
we have
\begin{align*}
\Re\langle M_{\infty}u|\X_{]-\infty,a]}(m_{0})u\rangle_{H{}_{\rho,0}(\mathbb{R};H)} & =\Re\langle\X_{]-\infty,a]}(m_{0})M_{\infty}u|\X_{]-\infty,a]}(m_{0})u\rangle_{H{}_{\rho,0}(\mathbb{R};H)}\\
 & =\Re\langle M_{\infty}\X_{]-\infty,a]}(m_{0})u|\X_{]-\infty,a]}(m_{0})u\rangle_{H{}_{\rho,0}(\mathbb{R};H)}.\tag*{\qedhere}
\end{align*}

\end{proof}

\section{An application to a Kelvin-Voigt-type model in visco-elasticity}

Although, applications are obviously abundant by simply extending
well-known autonomous problems to the time-dependent coefficient case,
we intend to give a more explicit application here to illustrate some
of the issues that may appear in the non-autonomous case. A more straight-forward
application would be for example solving Maxwell's equations in the
presence of a moving body, which reduces via suitable transformations
to solving Maxwell's equations with the body at rest but coefficients
depending on time, \cite{0516086}. Applying the above theory to this
case avoids the intricacies of Kato's method of evolution systems
employed in \cite{0516086}. As a by-product, the assumptions needed
are considerably less restrictive. 

As a more intricate application we would like to elaborate on here,
we consider a time-dependent Kelvin-Voigt material in visco-elasticity.
In \cite{pre05637182} such a material is considered in connection
with modeling a solidifying visco-elastic composite material and discussing
homogenization issues. We shall use this as a motivation to analyze
well-posedness in the presence of such a material under less restrictive
assumptions.

In this model we have the equation 
\begin{align*}
\partial_{0}\eta(m_{0})\partial_{0}u-\mathrm{Div}T & =f
\end{align*}
linking stress tensor field $T$ with the displacement vector field
$u$, accompanied by a material relation of the form
\begin{equation}
T=\left(C\left(m_{0}\right)+D\left(m_{0}\right)\partial_{0}\right)\mathcal{E}\label{eq:KV}
\end{equation}
where $\Div$ is the restriction of the tensorial divergence operator
$\dive$ to symmetric tensors of order 2 and $\mathcal{E}\coloneqq\Grad u$
with $\Grad$ denoting the symmetric part of the Jacobian matrix $d\otimes u$
of the displacement vector field $u$. The operators $C,D$ and $\eta$
are thought of as material dependent parameters. Here the case $D\left(m_{0}\right)=0$
would correspond to purely elastic behavior. Introducing $v\coloneqq\partial_{0}u$
as a new unknown we arrive, by differentiating (\ref{eq:KV}), at
\begin{align*}
\partial_{0}\eta(m_{0})v-\mathrm{Div}T & =f\\
\partial_{0}\left(C\left(m_{0}\right)+D\left(m_{0}\right)\partial_{0}\right)^{-1}T & =\mathrm{Grad}v,
\end{align*}
where we can choose $\rho$ large enough, such that 
\[
C\left(m_{0}\right)+D\left(m_{0}\right)\partial_{0}=\left(C(m_{0})\partial_{0}^{-1}+D(m_{0})\right)\partial_{0}
\]

gets boundedly invertible. Assuming for sake of definiteness vanishing
of the displacement $u$ on the boundary as a boundary condition we
obtain an evolutionary equation of the form
\begin{align}
\left(\partial_{0}\left(\begin{array}{cc}
\eta(m_{0}) & 0\\
0 & \left(C\left(m_{0}\right)+D\left(m_{0}\right)\partial_{0}\right)^{-1}
\end{array}\right)+\left(\begin{array}{cc}
0 & -\Div\\
-\interior\Grad & 0
\end{array}\right)\right)\left(\begin{array}{c}
v\\
T
\end{array}\right) & =\left(\begin{array}{c}
f\\
0
\end{array}\right),\label{eq:evo-ve}
\end{align}
where the choice of boundary condition amounts to replacing $\Grad$
by the closure $\interior\Grad$ of the restriction of $\Grad$ to
vector fields with smooth components vanishing outside of a compact
subset of $\Omega.$ The underlying Hilbert space is the subspace
$L^{2}\left(\Omega\right)^{3}\oplus L_{3\times3,\mathrm{sym}}^{2}\left(\Omega\right)$
of $L^{2}\left(\Omega\right)^{3}\oplus L^{2}\left(\Omega\right)^{3\times3}$
with its natural norm, where the second block-component space $L_{3\times3,\mathrm{sym}}^{2}\left(\Omega\right)$
denotes the restriction of $L^{2}\left(\Omega\right)^{3\times3}$
to symmetric matrices with entries in $L^{2}\left(\Omega\right).$
Note that then $\left(\begin{array}{cc}
0 & -\Div\\
-\interior\Grad & 0
\end{array}\right)$ is skew-selfadjoint (see e.g. \cite[Section 5.5.1]{Picard_McGhee}).

The operator families $(C\left(t\right))_{t\in\mathbb{R}}$ and $(D(t))_{t\in\mathbb{R}}$
are assumed to be uniformly bounded in $L_{3\times3,\mathrm{sym}}^{2}\left(\Omega\right).$
Further constraint will of course be required to satisfy the assumptions
of our solution theory above. We are led to a material law operator
of the form 
\[
\mathcal{M}\left(\partial_{0}^{-1}\right)=\left(\begin{array}{cc}
\eta\left(m_{0}\right) & 0\\
0 & \left(C\left(m_{0}\right)+D\left(m_{0}\right)\partial_{0}\right)^{-1}
\end{array}\right).
\]

To deal with the term $\left(C\left(m_{0}\right)+D\left(m_{0}\right)\partial_{0}\right)^{-1}$
we need a projection technique. For this we recall that for a closed
subspace $V$ of the underlying Hilbert space $L_{3\times3,\mathrm{sym}}^{2}\left(\Omega\right)$,
$\iota_{V}$ denotes the canonical injection of $V$ into $L_{3\times3,\mathrm{sym}}^{2}\left(\Omega\right)$.
The solution theory for the Kelvin-Voigt-type model is then summarized
in the following theorem.
\begin{thm}
Let $\Omega\subseteq\mathbb{R}^{3}$ be open and $V$ be a closed
subspace of $L_{3\times3,\mathrm{sym}}^{2}(\Omega)$. Let $C\in L_{s}^{\infty}(\mathbb{R};L(L_{3\times3,\mathrm{sym}}^{2}\left(\Omega\right)))$,
$\eta\in L_{s}^{\infty}(\mathbb{R};L(L^{2}\left(\Omega\right)^{3}))$
and $B\in L_{s}^{\infty}(\mathbb{R};L(V))$. Assume that $C,\eta$
satisfy the properties (\ref{selfadjoint})-(\ref{differentiable}).
We set
\[
D(t):=\left(\begin{array}{cc}
B(t) & 0\\
0 & 0
\end{array}\right)\in L(V\oplus V^{\bot})
\]
 for all $t\in\mathbb{R}$ and we assume the existence of $c>0$ such
that for all $t\in\mathbb{R}$ we have
\[
\Re B(t)\geq c,\quad\iota_{V^{\bot}}^{*}C(t)\iota_{V^{\bot}}\geq c,\quad\eta(t)\geq c.
\]
Then for all sufficiently large $\rho$ we have that for all $F\in H_{\rho,0}(\mathbb{R};$$L^{2}\left(\Omega\right)^{3}\oplus L_{3\times3,\mathrm{sym}}^{2}\left(\Omega\right))$
there exists a unique solution $(v,T)\in H_{\rho,0}(\mathbb{R};L^{2}\left(\Omega\right)^{3}\oplus L_{3\times3,\mathrm{sym}}^{2}\left(\Omega\right))$
of the equation 
\[
\left(\partial_{0}\left(\begin{array}{cc}
\eta(m_{0}) & 0\\
0 & \left(C\left(m_{0}\right)+D\left(m_{0}\right)\partial_{0}\right)^{-1}
\end{array}\right)+\left(\begin{array}{cc}
0 & -\Div\\
-\interior\Grad & 0
\end{array}\right)\right)\left(\begin{array}{c}
v\\
T
\end{array}\right)=F.
\]
The solution depends continuously on the data. The solution operator,
mapping any right-hand side $F$ to the corresponding solution of
the latter equation, is causal.\end{thm}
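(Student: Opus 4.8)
The plan is to recast the Kelvin--Voigt operator in the form $\partial_0 M_0(m_0)+M_1(m_0)+M_\infty+A$ and to invoke Theorem \ref{thm:Solutiontheory-per2}. Write $A$ for the block operator $\begin{pmatrix}0 & -\Div\\ -\interior\Grad & 0\end{pmatrix}$ of the system, which is skew-selfadjoint by \cite[Section~5.5.1]{Picard_McGhee}, put $H:=L^2(\Omega)^3\oplus L_{3\times3,\mathrm{sym}}^2(\Omega)$, and use the orthogonal decomposition $L_{3\times3,\mathrm{sym}}^2(\Omega)=V\oplus V^{\bot}$, writing $C_{ij}(m_0)$ for the four blocks $\iota_\bullet^{*}C(m_0)\iota_\bullet$; each of these inherits properties (\ref{selfadjoint})--(\ref{differentiable}) from $C$. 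The first diagonal entry of the material operator is just $\eta(m_0)$; the work lies in $(C(m_0)+D(m_0)\partial_0)^{-1}$. As $D(m_0)=\iota_V B(m_0)\iota_V^{*}$ and $\partial_0$ is boundedly invertible, $\partial_0(C(m_0)+D(m_0)\partial_0)^{-1}=(D(m_0)+C(m_0)\partial_0^{-1})^{-1}$ whenever the right-hand side makes sense, which (for $\rho$ large) is the object I analyse next.

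The key step is the identity, on $H_{\rho,0}(\mathbb{R};L_{3\times3,\mathrm{sym}}^2(\Omega))$ for $\rho$ large,
\[
\partial_0(C(m_0)+D(m_0)\partial_0)^{-1}=\partial_0\,\iota_{V^{\bot}}C_{22}(m_0)^{-1}\iota_{V^{\bot}}^{*}+\iota_V B(m_0)^{-1}\iota_V^{*}-\iota_V B(m_0)^{-1}C_{12}(m_0)C_{22}(m_0)^{-1}\iota_{V^{\bot}}^{*}+M_\infty^{(T)},
\]
with $M_\infty^{(T)}$ bounded and causal: the first term is $\partial_0$ applied to the $V^{\bot}$-block of $M_0(m_0):=\mathrm{diag}(\eta(m_0),0,(\iota_{V^{\bot}}^{*}C(m_0)\iota_{V^{\bot}})^{-1})$ on $L^2(\Omega)^3\oplus V\oplus V^{\bot}$, while the next two are the only nonzero blocks of $M_1(m_0)$, namely $B(m_0)^{-1}$ from $V$ to $V$ and $-B(m_0)^{-1}C_{12}(m_0)C_{22}(m_0)^{-1}$ from $V^{\bot}$ to $V$. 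To prove the identity, let $\tilde P$ be the sum of the three explicit operators. Using $C(m_0)\iota_{V^{\bot}}=\iota_V C_{12}(m_0)+\iota_{V^{\bot}}C_{22}(m_0)$, $B(m_0)B(m_0)^{-1}=1$, $\iota_V^{*}\iota_{V^{\bot}}=0$, and the identity $C(m_0)\partial_0^{-1}=\partial_0^{-1}C(m_0)+\partial_0^{-1}\dot C(m_0)\partial_0^{-1}$ established in the proof of Lemma \ref{lem:product rule} (applicable since $C$ has properties (\ref{Lipschitz}) and (\ref{differentiable})), a direct computation gives $(D(m_0)+C(m_0)\partial_0^{-1})\tilde P=1+\partial_0^{-1}R_\rho$ with $R_\rho$ causal and $\sup_{\rho\ge\rho_0}\|R_\rho\|<\infty$ — the zeroth-order term $\iota_V C_{12}(m_0)C_{22}(m_0)^{-1}\iota_{V^{\bot}}^{*}$ generated by $C(m_0)\partial_0^{-1}$ is cancelled exactly by the last summand of $\tilde P$. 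Hence for $\rho$ large $1+\partial_0^{-1}R_\rho$ is boundedly invertible by a norm-convergent Neumann series with causal inverse; passing to the adjoint (where $\partial_0$ is replaced by $\partial_0^{*}=2\rho-\partial_0$, again boundedly invertible, and only the bounded multiplication operators $(B(m_0)^{*})^{-1}$ and $C(m_0)$ occur) yields a bounded left inverse, so $D(m_0)+C(m_0)\partial_0^{-1}$ is boundedly invertible. Since also $\tilde P\partial_0^{-1}$ is bounded — apply the same commutator identity to the Lipschitz multiplication operator $\iota_{V^{\bot}}C_{22}(m_0)^{-1}\iota_{V^{\bot}}^{*}$ — we obtain the identity with $M_\infty^{(T)}:=-(\tilde P\partial_0^{-1})R_\rho(1+\partial_0^{-1}R_\rho)^{-1}$, which is bounded, $\limsup_{\rho\to\infty}\|M_\infty^{(T)}\|<\infty$; moreover, because $\iota_V^{*}\tilde P$ involves no $\partial_0$, so that $\iota_V^{*}\tilde P\partial_0^{-1}$ carries a free $\partial_0^{-1}$, the $V$-to-$V$ block of $M_\infty^{(T)}$ has norm $O(1/\rho)$. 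Putting $M_\infty:=0\oplus M_\infty^{(T)}$ on $H$, the Kelvin--Voigt operator equals $\partial_0 M_0(m_0)+M_1(m_0)+M_\infty+A$ with maximal domain.

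It remains to check the hypotheses of Theorem \ref{thm:Solutiontheory-per2} with $W:=\{0\}\oplus V\oplus\{0\}=N(M_0(0))$ playing the role of the subspace denoted $V$ there. First, $M_0$ satisfies (\ref{selfadjoint})--(\ref{differentiable}): $\eta$ does by assumption, the zero block trivially, and $(\iota_{V^{\bot}}^{*}C(m_0)\iota_{V^{\bot}})^{-1}$ does since $\iota_{V^{\bot}}^{*}C(m_0)\iota_{V^{\bot}}$ inherits these from $C$ and is bounded below by $c$, so pointwise inversion preserves selfadjointness and non-negativity and — via $S(t)^{-1}-S(s)^{-1}=S(t)^{-1}(S(s)-S(t))S(s)^{-1}$ and $\tfrac{d}{dt}S(t)^{-1}=-S(t)^{-1}\dot S(t)S(t)^{-1}$ — Lipschitz-continuity and differentiability. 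Its kernel equals $W$ for all $t$. Since $\Re B(t)\ge c$ forces $\Re B(t)^{-1}\ge c\,\|B(m_0)\|^{-2}$ (test at $y=B(t)x$), inequality (\ref{eq:pos_def-1}) holds with $c_0:=c\,\|B(m_0)\|^{-2}$ (note $\iota_W^{*}\Re M_1(t)\iota_W=\Re B(t)^{-1}$), and inequality (\ref{eq:pos_def-2}) holds with $c_1:=\min\{c,\|C(m_0)\|^{-1}\}$. Finally $M_\infty$ is causal, $\limsup_\rho\|M_\infty\|<\infty$, and for any fixed $\varepsilon\in\,]0,c_0[$ and all $\rho$ large, $\langle\Re M_\infty P_W u|P_W u\rangle_{H_{\rho,0}(\mathbb{R};H)}\ge-\|\iota_W^{*}M_\infty\iota_W\|\,|P_W u|^2\ge(\varepsilon-c_0)|P_W u|^2$. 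Theorem \ref{thm:Solutiontheory-per2} then gives, for all sufficiently large $\rho$, continuous invertibility of the Kelvin--Voigt operator — hence existence, uniqueness and continuous dependence on the data — together with causality of the solution operator.

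The main obstacle is the computation in the second paragraph: showing that the ``$\partial_0$-singular'' part of $\partial_0(C(m_0)+D(m_0)\partial_0)^{-1}$ is exactly $\partial_0\,\iota_{V^{\bot}}C_{22}(m_0)^{-1}\iota_{V^{\bot}}^{*}$ and that the remainder is honestly bounded, causal, and small on $W$ for large $\rho$. The delicate structural feature is that $B$ (hence $D$) is assumed only bounded with $\Re B\ge c$ and need not be Lipschitz, so $B$ must enter the decomposition solely through the bounded multiplication operator $B(m_0)^{-1}$ and never be differentiated; every operator that gets commuted past $\partial_0$ — and hence differentiated via Lemma \ref{lem:product rule} — originates from $C$, which is Lipschitz.
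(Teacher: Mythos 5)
Your proof is correct, but it takes a genuinely different route from the paper's. The paper factorizes $\left(C(m_{0})+D(m_{0})\partial_{0}\right)^{-1}$ as a congruence $S(m_{0})W(m_{0})S(m_{0})^{*}$ with $S$ the lower-triangular Schur factor, Neumann-expands the top-left corner of $W$, and then must show that $S$ is Lipschitz so that the product rule of Lemma \ref{lem:product rule} can be applied to $S(m_{0})$; this yields a ``symmetric'' $M_{1}=S(m_{0})\operatorname{diag}(B(m_{0})^{-1},0)S(m_{0})^{*}$ and a remainder whose $L(H_{\rho,0})$-norm tends to $0$ as $\rho\to\infty$, so the simpler perturbation result, Theorem \ref{thm:Solutiontheory-per1}, suffices (after verifying (\ref{eq:pos_def}) directly for large $\rho$). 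You instead posit the principal part $\tilde P$ of $\partial_{0}\left(C(m_{0})+D(m_{0})\partial_{0}\right)^{-1}=(D(m_{0})+C(m_{0})\partial_{0}^{-1})^{-1}$ and verify $(D(m_{0})+C(m_{0})\partial_{0}^{-1})\tilde P=1+\partial_{0}^{-1}R_{\rho}$, inverting by a Neumann series; the cancellation of the cross term $\iota_{V}C_{12}(m_{0})C_{22}(m_{0})^{-1}\iota_{V^{\bot}}^{*}$ is exactly right, and the left-inverse argument via the adjoint closes the invertibility. What your route buys: only $C$ and $C_{22}^{-1}$ are ever commuted past $\partial_{0}^{\pm1}$, so $B$ is never differentiated and the computation of $\dot S$ is avoided entirely. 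What it costs: your remainder $M_{\infty}^{(T)}$ is only bounded, not $o(1)$ in $\rho$ (its leading term $-C_{22}(m_{0})^{-1}\iota_{V^{\bot}}^{*}R_{\rho}(1+\partial_{0}^{-1}R_{\rho})^{-1}$ does not vanish), so Theorem \ref{thm:Solutiontheory-per1} is not applicable and you correctly fall back on Theorem \ref{thm:Solutiontheory-per2}, exploiting that the $W$-to-$W$ block of the remainder is $O(1/\rho)$ because $\iota_{V}^{*}\tilde P$ contains no $\partial_{0}$. Your verification of the hypotheses of Theorem \ref{thm:Solutiontheory-per2} (time-independence of $N(M_{0}(t))$, (\ref{eq:pos_def-1}) with $c_{0}=c\,|B|_{\infty}^{-2}$ via $\Re B(t)^{-1}\geq c\,\|B(t)\|^{-2}$, (\ref{eq:pos_def-2}) with $\min\{c,|C|_{\infty}^{-1}\}$, causality of $M_{\infty}$) is sound; the only cosmetic slip is the claim that all four blocks $\iota_{\bullet}^{*}C\iota_{\bullet}$ inherit property (\ref{selfadjoint}), which is meaningless for the off-diagonal blocks, but you never use it there.
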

\begin{proof}
The proof rests on the perturbation result Theorem \ref{thm:Solutiontheory-per1}.
Since the top left corner in the system under consideration clearly
satisfies the solvability condition (\ref{eq:pos_def}), we only have
to discuss the lower right corner. For this we have to find a more
explicit expression for $\left(C\left(m_{0}\right)+D\left(m_{0}\right)\partial_{0}\right)^{-1}$.
An easy computation shows that
\begin{align*}
 & \left(C\left(m_{0}\right)+D\left(m_{0}\right)\partial_{0}\right)^{-1}\\
 & =\left(\begin{array}{cc}
\iota_{V}^{*}C(m_{0})\iota_{V}+B(m_{0})\partial_{0} & \iota_{V}^{*}C(m_{0})\iota_{V^{\bot}}\\
\iota_{V^{\bot}}^{*}C(m_{0})\iota_{V} & \iota_{V^{\bot}}^{*}C(m_{0})\iota_{V^{\bot}}
\end{array}\right)^{-1}\\
 & =\left(\begin{array}{cc}
1 & 0\\
-(\iota_{V^{\bot}}^{*}C(m_{0})\iota_{V^{\bot}})^{-1}\iota_{V^{\bot}}^{*}C(m_{0})\iota_{V} & 1
\end{array}\right)W(m_{0})\left(\begin{array}{cc}
1 & -\iota_{V}^{*}C(m_{0})\iota_{V^{\bot}}(\iota_{V^{\bot}}^{*}C(m_{0})\iota_{V^{\bot}})^{-1}\\
0 & 1
\end{array}\right),
\end{align*}
with{\footnotesize 
\begin{align*}
 & W(m_{0})\\
 & =\left(\begin{array}{cc}
\left(\iota_{V}^{*}C(m_{0})\iota_{V}+B(m_{0})\partial_{0}-\iota_{V}^{*}C(m_{0})\iota_{V^{\bot}}(\iota_{V^{\bot}}^{*}C(m_{0})\iota_{V^{\bot}})^{-1}\iota_{V^{\bot}}^{*}C(m_{0})\iota_{V}\right)^{-1} & 0\\
0 & (\iota_{V^{\bot}}^{*}C(m_{0})\iota_{V^{\bot}})^{-1}
\end{array}\right).
\end{align*}
 }Denoting $S(m_{0})\coloneqq\left(\begin{array}{cc}
1 & 0\\
-(\iota_{V^{\bot}}^{*}C(m_{0})\iota_{V^{\bot}})^{-1}\iota_{V^{\bot}}^{*}C(m_{0})\iota_{V} & 1
\end{array}\right)$, we see that 
\[
\left(C\left(m_{0}\right)+D\left(m_{0}\right)\partial_{0}\right)^{-1}=S(m_{0})W(m_{0})S(m_{0})^{*},
\]
 by the selfadjointness of $C(m_{0})$. Now, the top left corner of
$W(m_{0})$ may be expressed with the help of a Neumann expansion
in the following way 
\begin{align*}
 & \left(B(m_{0})\partial_{0}+\iota_{V}^{*}C(m_{0})\iota_{V}-\iota_{V}^{*}C(m_{0})\iota_{V^{\bot}}(\iota_{V^{\bot}}^{*}C(m_{0})\iota_{V^{\bot}})^{-1}\iota_{V^{\bot}}^{*}C(m_{0})\iota_{V}\right)^{-1}\\
 & =\partial_{0}^{-1}B(m_{0})^{-1}\left(1+\partial_{0}^{-1}B(m_{0})^{-1}\left(\iota_{V}^{*}C(m_{0})\iota_{V}-\iota_{V}^{*}C(m_{0})\iota_{V^{\bot}}(\iota_{V^{\bot}}^{*}C(m_{0})\iota_{V^{\bot}})^{-1}\iota_{V^{\bot}}^{*}C(m_{0})\iota_{V}\right)\right)^{-1}\\
 & =\partial_{0}^{-1}B(m_{0})^{-1}+\partial_{0}^{-1}\tilde{M}_{\infty}
\end{align*}
 for some suitable $\tilde{M}_{\infty},$ satisfying $\lVert\tilde{M}_{\infty}\rVert_{L(H_{\rho,0}(\mathbb{R};V\oplus V^{\bot}))}\to0$
as $\rho\to\infty.$ Thus, we arrive at
\begin{align}
 & \left(C\left(m_{0}\right)+D\left(m_{0}\right)\partial_{0}\right)^{-1}\nonumber \\
 & =S(m_{0})\left(\begin{array}{cc}
0 & 0\\
0 & (\iota_{V^{\bot}}^{*}C(m_{0})\iota_{V^{\bot}})^{-1}
\end{array}\right)S(m_{0})^{*}+S(m_{0})\left(\begin{array}{cc}
\partial_{0}^{-1}B(m_{0})^{-1} & 0\\
0 & 0
\end{array}\right)S(m_{0})^{*}\nonumber \\
 & \quad+S(m_{0})\left(\begin{array}{cc}
\partial_{0}^{-1}\tilde{M}_{\infty} & 0\\
0 & 0
\end{array}\right)S(m_{0})^{*}.\label{eq:Kelvin_Voigt}
\end{align}
 The first term on the right-hand side can be computed as follows:
\[
S(m_{0})\left(\begin{array}{cc}
0 & 0\\
0 & (\iota_{V^{\bot}}^{*}C(m_{0})\iota_{V^{\bot}})^{-1}
\end{array}\right)S(m_{0})^{\ast}=\left(\begin{array}{cc}
0 & 0\\
0 & (\iota_{V^{\bot}}^{*}C(m_{0})\iota_{V^{\bot}})^{-1}
\end{array}\right).
\]
For the second and third term on the right-hand side we observe that
$S\in L_{s}^{\infty}(\mathbb{R};L(V\oplus V^{\bot}))$. Moreover,
$S$ is Lipschitz-continuous. Indeed, observing that 
\begin{align*}
 & \|(\iota_{V^{\bot}}^{*}C(t)\iota_{V^{\bot}})^{-1}-(\iota_{V^{\bot}}^{*}C(s)\iota_{V^{\bot}})^{-1}\|_{L(V^{\bot})}\\
 & \leq\|(\iota_{V^{\bot}}^{*}C(t)\iota_{V^{\bot}})^{-1}\|_{L(V^{\bot})}\|\iota_{V^{\bot}}^{\ast}(C(s)-C(t))\iota_{V^{\bot}}\|_{L(V^{\bot})}\|(\iota_{V^{\bot}}^{\ast}C(s)\iota_{V^{\bot}})^{-1}\|_{L(V^{\bot})}\\
 & \leq c^{-2}|C|_{\mathrm{Lip}}|s-t|
\end{align*}
and that, using $|C|_{\infty}\coloneqq\sup_{t\in\mathbb{R}}\|C(t)\|$,
\begin{align*}
 & \|(\iota_{V^{\bot}}^{*}C(t)\iota_{V^{\bot}})^{-1}\iota_{V^{\bot}}^{*}C(t)\iota_{V}-(\iota_{V^{\bot}}^{*}C(s)\iota_{V^{\bot}})^{-1}\iota_{V^{\bot}}^{*}C(s)\iota_{V}\|_{L(V,V^{\bot})}\\
 & \leq\|(\iota_{V^{\bot}}^{*}C(t)\iota_{V^{\bot}})^{-1}\|_{L(V^{\bot})}\|\iota_{V^{\bot}}^{*}(C(t)-C(s))\iota_{V}\|_{L(V,V^{\bot})}\\
 & \quad+\|(\iota_{V^{\bot}}^{*}C(t)\iota_{V^{\bot}})^{-1}-(\iota_{V^{\bot}}^{*}C(s)\iota_{V^{\bot}})^{-1}\|_{L(V^{\bot})}\|\iota_{V^{\bot}}^{*}C(s)\iota_{V}\|_{L(V,V^{\bot})}\\
 & \leq\left(c^{-1}|C|_{\mathrm{Lip}}+c^{-2}|C|_{\mathrm{Lip}}|C|_{\infty}\right)|s-t|,
\end{align*}
for $s,t\in\mathbb{R},$ we derive the Lipschitz-continuity of $S$.
Thus, $S$ satisfies the hypothesis (\ref{differentiable}), since
$L_{3\times3,\mathrm{sym}}^{2}(\Omega)$ is separable. Hence, we can
compute the second term on the right-hand side of (\ref{eq:Kelvin_Voigt})
as follows by using the product rule (\ref{eq:product_rule})
\begin{align*}
 & S(m_{0})\left(\begin{array}{cc}
\partial_{0}^{-1}B(m_{0})^{-1} & 0\\
0 & 0
\end{array}\right)S(m_{0})^{\ast}\\
 & =\partial_{0}^{-1}\partial_{0}S(m_{0})\left(\begin{array}{cc}
\partial_{0}^{-1}B(m_{0})^{-1} & 0\\
0 & 0
\end{array}\right)S(m_{0})^{\ast}\\
 & =\partial_{0}^{-1}\dot{S}(m_{0})\left(\begin{array}{cc}
\partial_{0}^{-1}B(m_{0})^{-1} & 0\\
0 & 0
\end{array}\right)S(m_{0})^{\ast}+\partial_{0}^{-1}S(m_{0})\left(\begin{array}{cc}
B(m_{0})^{-1} & 0\\
0 & 0
\end{array}\right)S(m_{0})^{\ast}.
\end{align*}
 Defining $\tilde{M}_{0}(m_{0})\coloneqq\left(\begin{array}{cc}
0 & 0\\
0 & (\iota_{V^{\bot}}^{*}C(m_{0})\iota_{V^{\bot}})^{-1}
\end{array}\right)$, $\tilde{M}_{1}(m_{0})\coloneqq S(m_{0})\left(\begin{array}{cc}
B(m_{0})^{-1} & 0\\
0 & 0
\end{array}\right)S(m_{0})^{*}$ and $\tilde{\tilde{M}}_{\infty}\coloneqq\dot{S}(m_{0})\left(\begin{array}{cc}
\partial_{0}^{-1}B(m_{0})^{-1} & 0\\
0 & 0
\end{array}\right)S(m_{0})^{*}+S(m_{0})\left(\begin{array}{cc}
\partial_{0}^{-1}\tilde{M}_{\infty} & 0\\
0 & 0
\end{array}\right)S(m_{0})^{*},$ we get that 
\[
\partial_{0}\left(C\left(m_{0}\right)+D\left(m_{0}\right)\partial_{0}\right)^{-1}=\partial_{0}\tilde{M}_{0}(m_{0})+\tilde{M}_{1}(m_{0})+\tilde{\tilde{M}}_{\infty}.
\]
Clearly, $\tilde{\tilde{M}}_{\infty}$ can be considered as a perturbation
as in Theorem \ref{thm:Solutiontheory-per1}, since
\[
\lVert\tilde{\tilde{M}}_{\infty}\rVert_{L(H_{\rho,0}(\mathbb{R};V\oplus V^{\bot}))}\to0\quad(\rho\to\infty).
\]
Moreover, one easily obtains that $\Re\langle\tilde{M}_{1}(t)\iota_{V}\phi|\iota_{V}\phi\rangle_{V\oplus V^{\bot}}\geq\left(c/|B|_{\infty}^{2}\right)|\phi|_{V}^{2}$
for all $t\in\mathbb{R}$ and $\phi\in V$. Since $\tilde{M}_{0}(t)$
is strictly positive on $V^{\bot}$ and $\dot{\tilde{M}}_{0}(t)$
is uniformly bounded in $t$, we get estimate (\ref{eq:pos_def})
for sufficiently large $\rho_{0}$. Theorem \ref{thm:Solutiontheory-per1}
concludes the proof. \end{proof}
\begin{rem}
The assumption on the subspace $V$ in the above theorem expresses
the fact that the null space of $D\left(m_{0}\right)$ is non-trivial
due to degeneracies in various regions in $\Omega$, but it is implied
that these regions do not vary in time. In other words there may be
stationary areas in which the material exhibits purely elastic behavior
and others showing different forms of visco-elastic behavior.
\end{rem}

\end{document}